\numberwithin{equation}{section}
\theoremstyle{plain}
\newtheorem{theorem}{Theorem}[section]
\newtheorem{lemma}[theorem]{Lemma}
\newtheorem{proposition}[theorem]{Proposition}
\theoremstyle{remark}
\newtheorem{definition}[theorem]{Definition}
\newtheorem{rmk}[theorem]{Remark}
\newcommand{\ee}{\end{equation}}
\newcommand{\de}{\begin{align*}}
\newcommand{\fe}{\end{align*}}
\newcommand{\R}{\mathbb{R}}
\newcommand{\E}{\mathbb{E}}
\newcommand{\N}{\mathbb{N}}
\newcommand{\Z}{\mathbb{Z}}
\newcommand{\mmin}{\mathfrak{min}}
\newcommand{\red}[1]{\textcolor{red}{#1}}
\newcommand{\blue}[1]{\textcolor{blue}{#1}}
\newcommand{\olive}[1]{\textcolor{olive}{#1}}
\newcommand{\purple}[1]{\textcolor{purple}{#1}}
\newcommand{\q}{t}
\newcommand{\LL}{\mathsf{L}}
\newcommand{\A}{\mathsf{A}}
\newcommand{\e}{\epsilon}
\newcommand{\mfs}{\mathfrak{s}}
\newcommand{\mfi}{\mathfrak{i}}
\newcommand{\mfj}{\mathfrak{j}}
\newcommand{\mfk}{\mathfrak{k}}
\newcommand{\mfl}{\mathfrak{l}}
\newcommand{\bx}{\mathbf{x}}
\newcommand{\bl}{\mathbf{l}}
\newcommand{\bk}{\mathbf{k}}
\newcommand{\bi}{\mathbf{i}}
\newcommand{\bj}{\mathbf{j}}
\newcommand{\h}{h}
\begin{document}

\begin{frontmatter}

\title{Hydrodynamics of the $t$-PNG model via a colored $t$-PNG model}
\runtitle{Hydrodynamics of the $t$-PNG model}

\begin{aug}
\author[A]{\fnms{Hindy}~\snm{Drillick}\ead[label=e1]{hindy.drillick@columbia.edu}\orcid{0000-0003-1515-9922}    }
\and
\author[B]{\fnms{Yier}~\snm{Lin}\ead[label=e2]{ylin10@uchicago.edu}\orcid{0000-0001-8197-0516}}
\address[A]{Department of Mathematics, Columbia University\printead[presep={,\ }]{e1}}

\address[B]{Department of Statistics, University of Chicago\printead[presep={,\ }]{e2}}
\end{aug}

\begin{abstract}
In this paper, we prove the hydrodynamic limit of the $t$-PNG model using soft techniques. One key element of the proof is the construction of a colored version of the $t$-PNG model, which allows us to apply the superadditive ergodic theorem and obtain the hydrodynamic limit, albeit without identifying the limiting constant. We then find this constant by proving a law of large numbers for the $\alpha$-points. Along the way, we construct the stationary $t$-PNG model and prove a version of Burke's theorem for it.
\end{abstract}


\begin{keyword}[class=MSC]
\kwd[Primary ]{60F15}
\kwd[; secondary ]{60J25}
\end{keyword}

\begin{keyword}
\kwd{Polynuclear growth model}
\kwd{Hammersley's process}
\kwd{Interacting particle systems}
\end{keyword}

\end{frontmatter}

\section{Introduction}
\subsection{Background}
The distribution theory of the length of the longest increasing subsequence $\ell_n$ of a random permutation of the numbers $1, \dots, n$ with uniform measure has been under intense study in the past few decades. Now there are surveys and books \cite{aldous1999longest, groeneboom2002hydrodynamical, romik2015surprising} on this topic. 

One object of particular interest is the $n \to \infty$ asymptotic behavior of $\ell_n$. \cite{Seedlings} proved a law of large numbers $\frac{\ell_n}{\sqrt{n}} \overset{p}{\to} \gamma$ (without identifying the constant $\gamma$) by considering a Poissonized version of $\ell_n$ and relating it to a last passage percolation model called the polynuclear growth (PNG) model or Hammersley's process. Then the superadditive ergodic theorem implies the law of large numbers. There have been various approaches to determining the constant $\gamma$, see \cite{Logan-Shepp, Vershik-Kerov, seppalainen1996microscopic, aldous1999longest, groeneboom2001ulam, HammersleySourcesAndSinks}. By detailed analysis of the exact expression of the distribution function, \cite{baik1999distribution} (see also \cite{borodin2000asymptotics, johansson2001discrete})
proved a Tracy-Widom fluctuation limit theorem for $\ell_n$. 

The PNG model lies in the so-called Kardar-Parisi-Zhang universality (KPZ) class. It is natural to wonder whether there is a way to deform the PNG model so that its deformation also belongs to the KPZ universality class. \cite{aggarwal2021deformed} recently introduced a one-parameter deformation of the PNG model called the $t$-PNG model. The $t$-PNG model is also related to the length of the longest increasing subsequence from the perspective of patience sorting \cite{aldous1999longest}.
Using the method of integrable probability, the authors of \cite{aggarwal2021deformed} proved a Tracy-Widom fluctuation limit theorem for the $t$-PNG model that substantially generalizes the result of \cite{baik1999distribution}. This shows that the $t$-PNG model also belongs to the KPZ universality class.
 
Inspired by \cite{Seedlings, aldous1999longest, groeneboom2001ulam, HammersleySourcesAndSinks}, the purpose of our paper is to study the $t$-PNG model using soft arguments. 
The main result is a shape theorem for the $t$-PNG model. We will view the $t$-PNG model from two different perspectives:
as a single-colored projection of the colored $t$-PNG model and as an interacting particle system. We will see how a combination of these perspectives leads us to the main result. 

\subsection{The $t$-PNG model}
Let us proceed to define the $t$-PNG model. Fix $t \in [0,1]$. First, we place a Poisson point process with intensity $1$ on the upper-right quadrant representing \emph{nucleations}. 
We draw lines emanating from each of these nucleations in both the upward and rightward directions until they collide with one another. We call this collision point an \emph{intersection point}. Given the Poisson nucleations, we sample the outcomes of the intersection points (lines will either cross or annihilate each other) starting with the intersection point which has the smallest sum of $x$- and $y$- coordinates and moving sequentially outward. At an intersection point, the two lines will cross each other with probability $t$ and will annihilate each other with probability $1-t$, forming a corner. We call these two types of intersection points \emph{crossing points} and \emph{corner points}, respectively. Note that when 
lines cross, they might generate new intersection points. We refer to Figure \ref{fig:def-qpng} for a sampling of the $t$-PNG model. See also Appendix \ref{sec:simulations} for computer simulations of the model for different values of $t$. 

Taking $t=0$, we recover the usual PNG model. The reason that we use the parameter $t$ as the deformation parameter (following \cite{aggarwal2021deformed}) is due to the model's connection to the Hall-Littlewood polynomials, which use the parameter $t$.

\begin{figure}[t]
\centering
\begin{tikzpicture}[scale=0.6]
\draw[->] (0, 0) -- (6, 0); 
\draw[->] (0, 0) -- (0, 6);
\node at (3, 3) {$\times$};
\node at (1, 5) {$\times$};
\node at (2, 2) {$\times$};
\node at (.5, .5) {\red{$0$}};
\node at (2.5, 2.5) {\red{$1$}};
\node at (3.5, 3.5) {\red{$2$}};
\draw[fill] (6, 6) circle (0.05);
\draw[dashed] (0, 6) -- (6, 6) -- (6, 0);
\node at (6.35, 6.35) {$(x, y)$};
\draw[thick] (1, 6) -- (1, 5) -- (2, 5) -- (2, 2) -- (6, 2);
\draw[thick] (3, 6) -- (3, 3) -- (6, 3);
\draw[fill, blue] (2, 5) circle (0.1);

\begin{scope}[xshift = 10cm]
\draw[->] (0, 0) -- (6, 0); 
\draw[->] (0, 0) -- (0, 6);
\node at (3, 3) {$\times$};
\node at (1, 5) {$\times$};
\node at (2, 2) {$\times$};
\node at (.5, .5) {\red{$0$}};
\node at (2.5, 2.5) {\red{$1$}};
\node at (3.5, 3.5) {\red{$2$}};
\node at (1.5, 5.5) {\red{$1$}};

\draw[fill] (6, 6) circle (0.05);
\draw[dashed] (0, 6) -- (6, 6) -- (6, 0);
\node at (6.35, 6.35) {$(x, y)$};
\draw[thick] (1, 6) -- (1, 5) -- (2, 5) -- (2, 2) -- (6, 2);
\draw[thick] (2, 6) -- (2, 5) -- (3, 5) -- (3, 3) -- (6, 3);
\draw[fill, red] (2, 5) circle (0.1);
\draw[fill, blue] (3, 5) circle (0.1);
\end{scope}

\begin{scope}[xshift = 20cm]
\draw[->] (0, 0) -- (6, 0); 
\draw[->] (0, 0) -- (0, 6);
\node at (3, 3) {$\times$};
\node at (1, 5) {$\times$};
\node at (2, 2) {$\times$};
\node at (.5, .5) {\red{$0$}};
\node at (2.5, 2.5) {\red{$1$}};
\node at (3.5, 3.5) {\red{$2$}};
\node at (1.5, 5.5) {\red{$1$}};
\node at (2.5, 5.5) {\red{$2$}};
\node at (3.5, 5.5) {\red{$3$}};
\draw[fill] (6, 6) circle (0.05);
\draw[dashed] (0, 6) -- (6, 6) -- (6, 0);
\node at (6.35, 6.35) {$(x, y)$};
\draw[thick] (1, 6) -- (1, 5) -- (2, 5) -- (2, 2) -- (6, 2);
\draw[thick] (2, 6) -- (2, 5) -- (3, 5) -- (3, 3) -- (6, 3);
\draw[thick] (3, 6) -- (3, 5) -- (6,5) ;
\draw[fill, red] (2, 5) circle (0.1);
\draw[fill, red] (3, 5) circle (0.1);
\end{scope}

\end{tikzpicture}
\caption{Possible samplings of the $t$-PNG model. $\times$ are the Poisson nucleations. The red points are the crossing points, and the blue points are the corner points. The numbers denote the height function in each section. Left panel: With probability $1-t$, the two colliding lines form a corner point. Center panel: With probability $t(1-t)$ we have a crossing point and a corner point. Right panel: Finally, with probability $t^2$, we have two crossing points.}
\label{fig:def-qpng}
\end{figure}
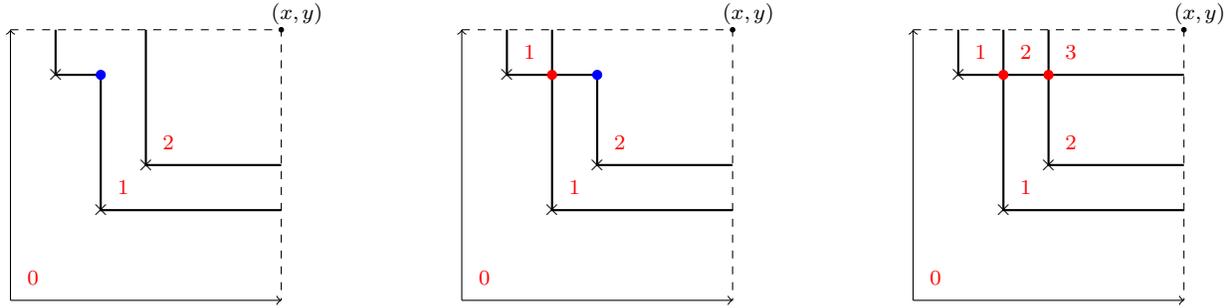

The main object we are interested in studying is the height function. 
As with the PNG model, we define the height function at the origin to be zero. Whenever we cross a line from left to right or from bottom to top, the height function increases by $1$ (see Figure \ref{fig:def-qpng}). To avoid ambiguity, we let the height function be right-continuous. We use $N(x, y)$ to denote the height function at the location $(x, y)$. Note that we hide the dependence on $t$ in the notation since it will be clear from the context. 

We can extend the above definitions to define the $t$-PNG model with boundary data. 
Fix a vector of locations on the positive $x$-axis which we will call \emph{sources} and a vector of locations on the positive $y$-axis which we will call \emph{sinks} such that on any rectangle $[0, m] \times [0,n]$ there are only finitely many sources and sinks on the bottom-left boundary. We treat the sources and sinks as additional nucleations and sample the model as before, ignoring lines that go along either the $x$-axis or $y$-axis (see Figure \ref{fig:def-qpng-sources-sinks}). The height function of the $t$-PNG model with boundary data is defined in the same way as before.

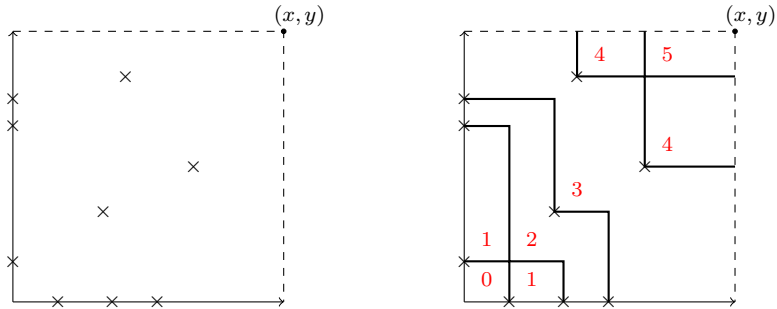
\begin{figure}[t]
\centering
\begin{tikzpicture}[scale=0.6]
\draw[->] (0, 0) -- (6, 0); 
\draw[->] (0, 0) -- (0, 6);
\node at (4, 3) {$\times$};
\node at (2.5, 5) {$\times$};
\node at (2, 2) {$\times$};

\draw[fill] (6, 6) circle (0.05);
\draw[dashed] (0, 6) -- (6, 6) -- (6, 0);
\node at (6.35, 6.35) {$(x, y)$};

\node at (1, 0) {$\times$};
\node at (2.2, 0) {$\times$};
\node at (3.2, 0) {$\times$};

\node at (0, .9) {$\times$};
\node at (0, 3.9) {$\times$};
\node at (0,4.5) {$\times$};



\begin{scope}[xshift = 10cm]
\draw[->] (0, 0) -- (6, 0); 
\draw[->] (0, 0) -- (0, 6);
\node at (4, 3) {$\times$};
\node at (2.5, 5) {$\times$};
\node at (2, 2) {$\times$};

\draw[fill] (6, 6) circle (0.05);
\draw[dashed] (0, 6) -- (6, 6) -- (6, 0);
\node at (6.35, 6.35) {$(x, y)$};

\draw[thick] (1, 0) -- (1, 0.9) -- (0, 0.9);
\draw[thick] (2.2, 0) -- (2.2, 0.9) -- (1, 0.9) --(1, 3.9) -- (0, 3.9);
\draw[thick] (3.2, 0) -- (3.2, 2) -- (2,2) --(2,4.5) -- (0, 4.5);
\draw[thick] (6,3) -- (4,3) -- (4,5) --(2.5,5)  -- (2.5,6);
\draw[thick] (6,5) -- (4,5) -- (4,6);

\node at (1, 0) {$\times$};
\node at (2.2, 0) {$\times$};
\node at (3.2, 0) {$\times$};

\node at (0, .9) {$\times$};
\node at (0, 3.9) {$\times$};
\node at (0,4.5) {$\times$};



\node at (.5, .5) {\red{$0$}};
\node at (.5, 1.4) {\red{$1$}};
\node at (1.5, .5) {\red{$1$}};
\node at (1.5, 1.4) {\red{$2$}};
\node at (2.5, 2.5) {\red{$3$}};
\node at (3, 5.5) {\red{$4$}};
\node at (4.5, 3.5) {\red{$4$}};
\node at (4.5, 5.5) {\red{$5$}};

\end{scope}

\end{tikzpicture}
\caption{A sampling of the $t$-PNG model with sources on the bottom boundary and sinks on the left boundary}
\label{fig:def-qpng-sources-sinks}
\end{figure}

\subsection{Statement of main result}
%
We present our main result, which is a shape theorem for the $t$-PNG model. 
\begin{theorem}\label{hydrodynamic_limit} 
Fix $t \in [0, 1)$. Let 
$N(x, y)$
be the height function of the $t$-PNG model as defined above. Then the following 
shape theorem holds: With probability $1$, we have for all fixed $x, y > 0$,
\begin{equation*}
\lim_{s \to \infty} \frac{N (sx, sy)}{s} = \frac{2\sqrt{xy}}{\sqrt{1-t}}. 
\end{equation*}
As a consequence, we have $\frac{N (x,y)}{\sqrt{xy}} \overset{p}{\to} \frac{2}{\sqrt{1-t}}$ as $xy \to \infty$.
\begin{rmk}
When $t = 0$, our result recovers the shape theorem for the PNG model that was proved in \cite{Logan-Shepp, Vershik-Kerov, aldous-diaconis, seppalainen1996microscopic, groeneboom2001ulam, HammersleySourcesAndSinks}. If we take $t = 1$, the right-hand side blows up. In that case,  $N(x, y)$ equals the number of Poisson nucleations in the rectangle $[0, x] \times [0, y]$. The asymptotic behavior of $N(sx, sy)$ then follows from the central limit theorem for a Poisson random variable. 
\end{rmk}
\begin{rmk}
Using the methods of integrable probability, \cite{aggarwal2021deformed} showed that as $s\to \infty$,  
\begin{equation*}
\frac{N(sx, sy) - \frac{2s\sqrt{xy}}{\sqrt{1-t}}}{(1-t)^{-\frac{1}{6}}s^{\frac{1}{3}} (xy)^{\frac{1}{6}}} \Rightarrow F_2
\end{equation*}  
where $F_2$ is the Tracy-Widom distribution. Our theorem uses a softer technique to extract the first-order asymptotic of $N(sx, sy)$ at the almost sure level.
\end{rmk}
\begin{rmk}\label{rmk:localconvergence}
A natural question to ask is whether one can prove anything about the local convergence of the model. When $t = 0$, \cite{aldous-diaconis} showed that 
\begin{equation*}
\{N(as + x, s) - N(as, s), x \in (-\infty, \infty)\} 
\end{equation*}
converges in distribution to a Poisson point process with intensity $\frac{1}{\sqrt{a}}$.
The proof in \cite{aldous-diaconis} is via identifying the PNG model as an interacting particle system on the real line and classifying its stationary distributions as convex combinations of Poisson point processes.

We believe that 
for general $t \in [0, 1)$, the same process 
converges in distribution to a Poisson point process with intensity $\frac{1}{\sqrt{(1-t)a}}$. The value $\frac{1}{\sqrt{(1-t)}a}$ comes from taking the spatial partial derivative of the hydrodynamic limit at $x = a$. 
The difficulty is that although the $t$-PNG model can still be viewed as an interacting particle system on any finite interval (see Section \ref{sec:upper-bound}), it is not clear how to extend the definition of the particle system to the entire real line due to its intricate dynamics.

\end{rmk}

\end{theorem}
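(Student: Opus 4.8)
\emph{Overview.} The shape theorem divides into two nearly independent problems: (i) showing that $N(sx,sy)/s$ converges a.s.\ to a constant, necessarily of the form $c(t)\sqrt{xy}$ by scaling, and (ii) evaluating $c(t)$. For (i) the plan is to invoke the superadditive ergodic theorem; the subtlety is that the outcome at an intersection point is sampled in a way that depends on all closer intersection points, so the $t$-PNG height is non-local and the monotone coupling that should underlie superadditivity is not evident. The colored $t$-PNG model is the device that repairs this. For (ii) the plan is to construct a stationary $t$-PNG model, prove a Burke-type theorem for it, and run a law of large numbers for its $\alpha$-points.

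\emph{Existence of the limit.} The intensity-$1$ Poisson field is invariant under $(x,y)\mapsto(\lambda x,\lambda^{-1}y)$, so $N(sx,sy)\overset{d}{=}N(s\sqrt{xy},s\sqrt{xy})$ and it suffices to analyze fixed rays. Fix the ray through $(x,y)$ and, for integers $0\le m<n$, let $X_{m,n}$ be the height at $(nx,ny)$ of a $t$-PNG model run with zero boundary data using only the nucleations in $[mx,nx]\times[my,ny]$; by translation invariance $X_{m,n}$ has the law of $N((n-m)x,(n-m)y)$ for a standard model, and the $X_{m,n}$ over disjoint index blocks are independent. The crucial point is the superadditivity $X_{0,n}\ge X_{0,m}+X_{m,n}$, which I would get from the colored $t$-PNG model: color the nucleations of the two sub-squares and exhibit a coupling in which enlarging the nucleation set never lowers the uncolored height, so that the full height at $(nx,ny)$ dominates the height generated by the inner square (at $(mx,my)$) plus the additional, zero-started height generated by the outer diagonal square. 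With a first-moment bound $\E[X_{0,1}]<\infty$ for $t<1$ (which also drops out of the stationary comparison below), the superadditive ergodic theorem gives $X_{0,n}/n\to c(t,x,y)$ a.s.\ and in $L^1$, with $0<c(t,x,y)<\infty$ — positivity because $N$ is monotone in $t$ under the obvious coupling, so $c(t,x,y)\ge c(0,x,y)=2\sqrt{xy}>0$. Scaling covariance forces $c(t,x,y)=c(t)\sqrt{xy}$ with $c(t):=c(t,1,1)$, and coordinatewise monotonicity of $(x,y)\mapsto N(x,y)$ upgrades convergence along a countable dense set of rays (and along integer $s$) to convergence for all real $s\to\infty$ and all $x,y>0$ simultaneously.

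\emph{Evaluating the constant.} Add independent Poisson sources of rate $\lambda$ on the $x$-axis and sinks of rate $\mu$ on the $y$-axis. I would show that when $(1-t)\lambda\mu=1$ the resulting model is stationary and obeys a Burke-type theorem — the level lines exiting the top and right sides are again governed by Poisson-type laws — whence $\E[N^{(\lambda,\mu)}(x,y)]=\lambda x+\mu y$ exactly, and (arguing as above, now with the exact mean in hand) $N^{(\lambda,\mu)}(sx,sy)/s\to\lambda x+\mu y$. Sources and sinks are extra nucleations, so $N\le N^{(\lambda,\mu)}$ pointwise under the natural coupling; hence $c(t)\sqrt{xy}\le\lambda x+\mu y$ for every admissible $(\lambda,\mu)$, and minimizing the right side subject to $(1-t)\lambda\mu=1$ by the AM--GM inequality gives $c(t)\le 2/\sqrt{1-t}$ (this comparison is also what makes $c(t)$ finite and supplies the first-moment bound used above). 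For the reverse inequality, take the characteristic rates $\lambda=\mu=1/\sqrt{1-t}$, for which $(1,1)$ is the characteristic direction, and show that the level lines contributing to $N^{(\lambda,\mu)}(s,s)$ that originate on the boundary number only $o(s)$, so that $N^{(\lambda,\mu)}(s,s)-N(s,s)=o(s)$ and therefore $c(t)\ge\lambda+\mu=2/\sqrt{1-t}$. Quantifying this ``$o(s)$'' is precisely a law of large numbers for the $\alpha$-points of the stationary model — the distinguished point process recording where the relevant level lines attach to the axes — whose intensity is read off from Burke's theorem. Together these give $c(t)=2/\sqrt{1-t}$ and hence the a.s.\ shape theorem; the ``in probability'' statement follows from it by monotonicity and scaling, since $N(x,y)/\sqrt{xy}$ depends on $(x,y)$ only through $\sqrt{xy}\to\infty$ up to the covariance.

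\emph{Main obstacles.} The two genuinely hard ingredients are the construction of the colored $t$-PNG model together with the verification that its single-color projections provide the monotone coupling needed for superadditivity — the non-locality of the resampling is exactly what blocks a direct argument — and the law of large numbers for the $\alpha$-points, which rests on first building the stationary $t$-PNG model and a $t$-analogue of Burke's theorem and is the step that actually produces the constant $2/\sqrt{1-t}$. Everything else (scaling covariance, the monotonicity bookkeeping, first-moment bounds, the AM--GM optimization) I expect to be routine.
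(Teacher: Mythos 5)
Your outline for the existence of the limit matches the paper: a colored $t$-PNG model is constructed so that the color-ignorance and mod-$2$ erasure properties produce a superadditive, block-stationary, block-independent family $X_{m,n}$ to which Liggett's theorem applies, and scaling/monotonicity bookkeeping then upgrade the diagonal convergence to all rays and all real $s$. Your upper bound via the stationary model with Poisson sources and sinks satisfying $(1-t)\lambda\mu=1$, followed by the AM--GM optimization, is also the paper's argument (the paper only needs stationarity for this step, not the full Burke theorem, though it does prove Burke as a standalone result).

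The genuine gap is in your route to the matching lower bound $c(t)\ge 2/\sqrt{1-t}$. You propose the standard characteristic-direction comparison: take $\lambda=\mu=1/\sqrt{1-t}$, argue that the boundary contribution to $N^{(\lambda,\mu)}(s,s)$ is $o(s)$, and conclude that $N(s,s)$ inherits the stationary linear growth. The paper explicitly remarks that this scheme, which works for exactly solvable LPP and polymer models, does \emph{not} transfer to $t$-PNG: the height function counts $\alpha$-points (nucleations plus crossing points), and the crossing points are themselves sampled objects whose realization depends on all incoming lines, including those coming off the boundary, so there is no exit-point decomposition and no coupling under which $N^{(\lambda,\mu)}(s,s)-N(s,s)$ is controlled by a small exit-point fluctuation. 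Your description of the $\alpha$-point LLN as being about ``where level lines attach to the axes'' in the stationary model is also not what the paper proves: the paper's $\alpha$-point LLN (Proposition 5.1) concerns the bulk count $\A_1(s)/s^2\to 1/(1-t)$ for the \emph{empty-boundary} model, proved by observing that the number of $\alpha$-points along the horizontal line through each nucleation is essentially geometric of parameter $1-t$, plus a second-moment bound for the one-sided $L^1$ convergence. The constant is then extracted not by a comparison of height functions but via the Groeneboom-type algebraic identity
\begin{equation*}
V_s(x,y)^2 \;=\; s^{-1}\bigl(V_s^+(x,y)+V_s^-(x,y)\bigr) \;+\; 2\int_{[0,x]\times[0,y]} \widetilde V_s(u,v)\,dV_s(u,v),
\end{equation*}
which in the limit yields $\gamma^2 = \tfrac{2}{1-t}+\tfrac{1}{2}\gamma^2$ and hence $\gamma = 2/\sqrt{1-t}$. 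This identity, relating the square of the height to the $\alpha$- and $\beta$-point counts, is the missing ingredient in your sketch and is precisely what replaces the characteristic-direction argument that fails here.
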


A natural first step to proving the hydrodynamic limit theorem of the PNG model and other last passage percolation models is to apply the superadditive ergodic theorem. To apply the superadditive ergodic theorem (see Theorem \ref{thm:liggettergodic}), one needs to construct a family of superadditive random variables $\{X_{m, n}: 0 \leq m \leq n\}$ where $\{X_{0, n}, n \geq 0\}$ records the height function. A subfamily of the random variables also needs to be ergodic. For the PNG model, one can define $X_{m, n}$ as the length of the longest up-right path from $(m, m)$ to $(n, n)$ (we allow segments that go straight up or to the right), where the length of a path is defined as the number of nucleation points that it collects along its trajectory. 

Let us introduce a few notions for the $t$-PNG model so that we can try to modify the approach above for the case where $t>0$. We define two sets of points called $\alpha$-points and $\beta$-points, which generalize the definitions in \cite{groeneboom2001ulam}. We define the set of $\alpha$-points as the union of the Poisson nucleations and crossing points. We define the set of $\beta$-points as the union of corner points and crossing points. In particular, the intersection of the set of $\alpha$-points and the set of $\beta$-points is the set of crossing points. 

We redefine the length of an up-right path to be the number of $\alpha$-points it collects. Then the height function of the $t$-PNG model is equal to the length of the longest up-right path. Hence, one can naively try to define the random variables $\{X_{m, n}: 0 \leq m \leq n\}$ in a similar way as above, but with our new definition of length. With this definition, the $X_{m,n}$ are superadditive as desired. However, they are no longer stationary since the number of crossing points in a box no longer just depends on the number of Poisson nucleations in that box---it also depends on the lines entering that box from the left and bottom boundaries. Therefore, we no longer even have stationarity and thus no ergodicity.

It turns out that there is indeed a way construct a family of random variables $\{X_{m, n}: 0 \leq m \leq n\}$ that satisfies the conditions required by the superadditive ergodic theorem. The family $\{X_{m, n}: 0 \leq m \leq n\}$ comes from a colored version of the $t$-PNG model that we are going to introduce. 
\subsection{The colored $t$-PNG model}

As we see from above, for the $t$-PNG model, two lines emanate rightward and upward from each Poisson nucleation point. The key rule for sampling the $t$-PNG model after fixing the Poisson nucleation points is that when two lines meet, they cross with probability $t$ and annihilate each other with probability $1-t$. 
This rule can be encoded into a stochastic matrix. 
We associate each intersection point with a 4-tuple $i, j, k, l \in \{0, 1\}$ which specifies the number of lines on the bottom, left, top, and right, respectively. We define a stochastic matrix $\LL^1$ as follows.  
\begin{definition}\label{def:Lmatrix}
The matrix $\LL^1$ is indexed by a 4-tuple $i, j, k, l \in \{0, 1\}$, where $i, j, k, l$ denote the number of lines (either zero or one) on the bottom, left, top, and right of an intersection point. We define
\begin{align*}
&\LL^1(1, 1; 1, 1) = t,\qquad \LL^1(1, 1; 0, 0) =1-t, \qquad \LL^1(1, 0; 1, 0) = 1,\\
&\LL^1(0, 1; 0, 1) = 1, \qquad \LL^1(0, 0; 0, 0) = 1.
\end{align*}
For all other $i, j, k, l \in \{0, 1\}$, we set $\LL^1(i, j; k, l) = 0$. For fixed input lines $i, j \in \{0, 1\}$, $\LL^1(i, j; \cdot, \cdot)$ is a probability measure on the output lines. See Figure \ref{fig:L-matrix} for illustration. 
\end{definition}
\begin{figure}[ht]
	\centering
	\begin{tikzpicture}
	\begin{scope}[xshift = 2cm]
	\draw[thick] (0, 0) -- (0.5, 0) -- (0.5, -0.5);
	\node at (0.5, -1) {$1-t$};
	\end{scope}
	\begin{scope}[xshift = 0cm]
	\draw[thick] (0, 0) -- (1, 0); 
	\draw[thick] (0.5, -0.5) -- (0.5, 0.5);
	\node at (0.5, -1) {$t$};
	\end{scope}
	\begin{scope}[xshift = 6cm]
	\draw[thick] (0, 0) -- (1, 0); 
	\node at (0.5, -1) {$1$};
	\end{scope}
	\begin{scope}[xshift = 4cm]
	\draw[thick] (0.5, -0.5) -- (0.5, 0.5);
	\node at (0.5, -1) {$1$};
	\end{scope}
	\begin{scope}[xshift = 8cm]
	\node at (0.5, -1) {$1$};
	\end{scope}
	\end{tikzpicture}
	\caption{We draw all the intersection configurations that have non-zero weights.}
	\label{fig:L-matrix}
\end{figure}
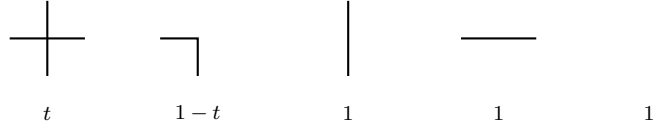
In Figure \ref{fig:L-matrix}, the first two configurations represent a vertical line intersecting with a horizontal line. The two lines cross with probability $t$ and annihilate each other with probability $1-t$. The third and fourth configurations depict that a horizontal (resp. vertical) line will continue as long as it does not meet a vertical (resp. horizontal) line. The last empty configuration represents the fact that lines cannot emerge out of nowhere (except at a Poisson nucleation, but those have already been fixed before sampling the rest of the model).  

For the colored $t$-PNG model, we denote the different colors by integers $i \in \mathbb{N}$. We allow multiple (but only finitely many) lines with different colors to travel together.  We say that the color $i$ has higher priority than the color $j$ if $i < j$. The only restriction is that lines traveling together must have different colors. 
The colored $t$-PNG model is defined by specifying the sampling rule for when horizontal lines and vertical lines meet. The sampling rule is given by a family of stochastic matrices $\{\mathsf{L}^n, n \in \mathbb{N}\}$ that are consistent. More concretely, the matrix $\LL^n$ has both rows and columns indexed by $\{0, 1\}^n \times \{0, 1\}^n$. The  matrix elements are given by $\mathsf{L}^n (\bi, \bj; \bk, \bl)$, where the four vectors $\bi, \bj, \bk, \bl \in \{0, 1\}^n$ specify the number of lines (either zero or one) of each color in $\{1, \dots, n\}$ on the bottom, left, top, and right of an intersection, respectively (see Figure \ref{fig:intersectionconfig}). The stochastic matrices give a probability measure on the output lines $\bk, \bl$ from an intersection point given the input lines $\bi, \bj$. 
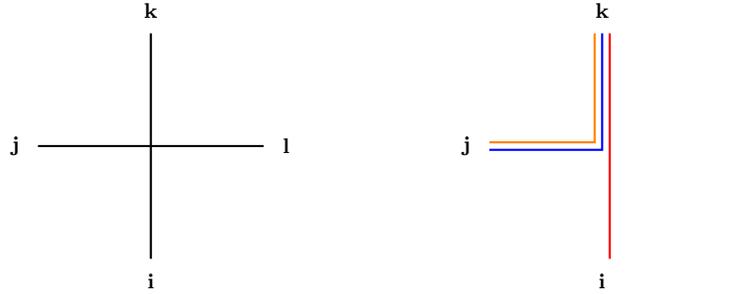
\begin{figure}[ht]
	\centering
	\begin{tikzpicture}
	\begin{scope}
	\draw[thick] (0, 0) -- (3, 0);
	\draw[thick] (1.5, -1.5) -- (1.5, 1.5);
	\node at (1.5, -1.8) {$\bi$}; 
	\node at (-0.3, 0) {$\bj$}; 
	\node at (1.5, 1.8) {$\bk$};
	\node at (3.3, 0) {$\bl$};
	\end{scope}
	\begin{scope}[xshift = 6cm]
	\draw[thick, blue] (0, -0.05) -- (1.5, -0.05) -- (1.5, 1.5);
	\draw[thick, orange] (0, 0.05) -- (1.4, 0.05) -- (1.4, 1.5);
	\draw[thick, red] (1.6, -1.5) -- (1.6, 1.5);
	\node at (1.5, -1.8) {$\bi$}; 
	\node at (-0.3, 0) {$\bj$}; 
	\node at (1.5, 1.8) {$\bk$};
	\node at (3.3, 0) {$\bl$};
	\end{scope}
	\end{tikzpicture}
	\caption{Left panel: Fix $n \in \mathbb{N}$. At an intersection point, we have lines with colors in $\{1, \dots, n\}$ in each of the four directions, but for any given direction there can be at most one line per color. Let $\bi, \bj, \bk, \bl \in \{0, 1\}^n$ denote the number of lines on the bottom, left, top and right directions, respectively, where the $m$-th coordinate of each vector records the number of lines with color $m$. Right panel: Take $n = 3$. Let red, blue, and orange denote the colors $1$, $2$, and $3$. We illustrate an example of the configuration with  $\bi = (1, 0, 0)$, $\bj = (0, 1, 1)$,  $\bk = (1, 1, 1)$, and $\bl = (0, 0, 0)$.}
	\label{fig:intersectionconfig}
\end{figure}
 

We proceed to give a closed form to the matrices $\{\LL^n: n \geq 1\}$. We first need to introduce some notation.
\begin{definition}[$r$-fold Projection]
    For $\mathbf{x} = (x_1, \dots, x_n) \in \{0, 1\}^n$ and $r \in \{1, \dots, n\}$, we define 
	$$\mfs_r (\mathbf{x}) = \Big(\sum_{m=1}^r x_m\Big) \mod 2.$$
Then for a vertex configuration given by the 4-tuple ($\bi ,\bj; \bk, \bl$), we can define its \textit{$r$-fold projection} to be the single-colored configuration $(i, j; k,l) = (\mfs_r (\bi), \mfs_r (\bj); \mfs_r (\bk), \mfs_r (\bl))$. 
\end{definition}

In other words, we are projecting the first $r$ colors of $(\bi, \bj; \bk, \bl)$ down to a single color and ignoring all colors greater than $r$. The projection $\mathfrak{s}_r (\mathbf{x})$ corresponds to replacing the colors $1, 2, \dots, r$ with a single color and then erasing every pair of lines. Then $\mfs_r (\mathbf{x})$ denotes the number of lines that remain. This is also equivalent to replacing the total number of lines by itself mod $2$. 

As an example consider the following three-colored configuration. We adopt the convention that red, blue, and orange denote the colors $1$, $2$, and $3$ respectively.

\begin{figure}[ht]
	\centering
	\begin{tikzpicture}[scale = 1]
	\draw[red, thick] (-0.1, -0.5) -- (-0.1, 0.5);
            \draw[blue, thick] (0, -0.5) -- (0, 0.5); 
            \draw[orange, thick] (0.1, -0.5) -- (0.1, -0.2) -- (0.5, -0.2); 
            \draw[blue, thick] (-0.5, -0.1) -- (0.5, -0.1);

	\end{tikzpicture}

\end{figure}
\noindent Its $1$-fold, $2$-fold, and $3$-fold projections are given by the following table:
\begin{figure}[ht]
	\centering
	\begin{tabular}{|c|c|c|c|}
	\hline
	Procedure &$1$-fold projection & 	$2$-fold projection &	$3$-fold projection \\
	\hline 
	\begin{tikzpicture}[scale = 1.2]
	\draw[fill][white] (0.5, 0) circle (0.05);
	\draw[thick][white] (0, 0) -- (1,0);
	\draw[thick][white] (0.5, -0.5) -- (0.5,0.5);
	\node at (0.5, 0) {Step 1: Consider the first $r$ colors.};
	\end{tikzpicture}
	 &
	
	\begin{tikzpicture}[scale=1]
	\draw[thick][white] (-0.5, 0) -- (0.5,0);
	\draw[thick][white] (0, -0.5) -- (0,0.5);
	\draw[red, thick] (0, -0.5) -- (0, 0.5);
	\end{tikzpicture}
	 & 
	\begin{tikzpicture}[scale=1]
	\draw[thick][white] (-0.5, 0) -- (0.5,0);
	\draw[thick][white] (0, -0.5) -- (0,0.5);
	\draw[red, thick] (-0.1, -0.5) -- (-0.1, 0.5);
    \draw[blue, thick] (0, -0.5) -- (0, 0.5); 
    \draw[blue, thick] (-0.5, -0.1) -- (0.5, -0.1);
	\end{tikzpicture}
&

    \begin{tikzpicture}
    \draw[thick][white] (-0.5, 0) -- (0.5,0);
	\draw[thick][white] (0, -0.5) -- (0,0.5);
    \draw[red, thick] (-0.1, -0.5) -- (-0.1, 0.5);
    \draw[blue, thick] (0, -0.5) -- (0, 0.5); 
    \draw[orange, thick] (0.1, -0.5) -- (0.1, -0.2) -- (0.5, -0.2); 
    \draw[blue, thick] (-0.5, -0.1) -- (0.5, -0.1);
\end{tikzpicture}\\ 		
	\hline
	\begin{tikzpicture}[scale = 1.2]
	\draw[fill][white] (0.5, 0) circle (0.05);
	\draw[thick][white] (0, 0) -- (1,0);
	\draw[thick][white] (0.5, -0.5) -- (0.5,0.5);
	\node at (0.5, 0) {Step 2: Recolor everything black.};
	\end{tikzpicture}
    &
    \begin{tikzpicture}
    \draw[thick][white] (-0.5, 0) -- (0.5,0);
	\draw[thick][white] (0, -0.5) -- (0,0.5);
	\draw[black, thick] (0, -0.5) -- (0, 0.5);
	 \end{tikzpicture} &
	 
	\begin{tikzpicture}
	\draw[thick][white] (-0.5, 0) -- (0.5,0);
	\draw[thick][white] (0, -0.5) -- (0,0.5);	
	 \draw[black, thick] (-0.1, -0.5) -- (-0.1, 0.5);
    \draw[black, thick] (0, -0.5) -- (0, 0.5); 
    \draw[black, thick] (-0.5, -0.1) -- (0.5, -0.1);
\end{tikzpicture}

&
    \begin{tikzpicture}
	\draw[thick][white] (-0.5, 0) -- (0.5,0);
	\draw[thick][white] (0, -0.5) -- (0,0.5);
    \draw[black, thick] (-0.1, -0.5) -- (-0.1, 0.5);
    \draw[black, thick] (0, -0.5) -- (0, 0.5); 
    \draw[black, thick] (0.1, -0.5) -- (0.1, -0.2) -- (0.5, -0.2); 
    \draw[black, thick] (-0.5, -0.1) -- (0.5, -0.1);
\end{tikzpicture} \\

	\hline
	\begin{tikzpicture}[scale = 1.2]
	\draw[fill][white] (0.5, 0) circle (0.05);
	\draw[thick][white] (0, 0) -- (1,0);
	\draw[thick][white] (0.5, -0.5) -- (0.5,0.5);
	\node at (0.5, 0) {Step 3: Replace the number of lines with the number of lines modulo 2.};
	\end{tikzpicture}	
	 &
	\begin{tikzpicture}
	\draw[thick][white] (-0.5, 0) -- (0.5,0);
	\draw[thick][white] (0, -0.5) -- (0,0.5);
    \draw[black, thick] (0, -0.5) -- (0, 0.5);
    \end{tikzpicture}
    &
    \begin{tikzpicture}
	\draw[thick][white] (-0.5, 0) -- (0.5,0);
	\draw[thick][white] (0, -0.5) -- (0,0.5);
    \draw[black, thick] (-0.5, -0.1) -- (0.5, -0.1);
\end{tikzpicture}

&

    \begin{tikzpicture}
	\draw[thick][white] (-0.5, 0) -- (0.5,0);
	\draw[thick][white] (0, -0.5) -- (0,0.5);
	\draw[black, thick] (-0.5, 0)--(0,0) --(0, -.5);
\end{tikzpicture}\\
	\hline
						
\end{tabular}
\caption{\textbf{$\mathbf{r}$-fold Projections}: An example of how to take the $r$-fold projections of a given configuration. We recolor the first $r$ colors black and delete all other colors. We then replace the number of lines with the number of lines modulo 2.}
	\label{fig:mod2erasure}
\end{figure}

To compute the weight of an $n$-colored configuration $(\bi, \bj; \bk, \bl)$, we will look at the weights of all of its $r$-fold projections for $r \in \{1, \dots, n\}$, which can be computed using the matrix $\LL^1$ that we have already defined in Definition \ref{def:Lmatrix}. We want the $r$-fold projections of the colored model to have the same distribution as the single-colored $t$-PNG model. Therefore, if any of the $r$-fold projections have weight zero then we need the weight of $(\bi, \bj; \bk, \bl)$ to be zero as well, as such a configuration should not be allowed.

Furthermore, we want to disallow the case where one of the $r$-fold projections has weight $t$ and another one has weight $1-t$ since this would prevent the colored model from being superadditive. The reason for this is as follows: Superadditivity will follow from a certain monotonicity property of the height function (See Property \hyperlink{hyp:property3}{3} below). Namely, the height function of the $2$-fold projection should be greater than or equal to the height function of the $1$-fold projection.

If we allow $r$-fold projections with weights $t$ and $1-t$, then we will allow the following sampling in Figure \ref{fig:badConfig} whose $2$-fold projection has a smaller height function than its $1$-fold projection:

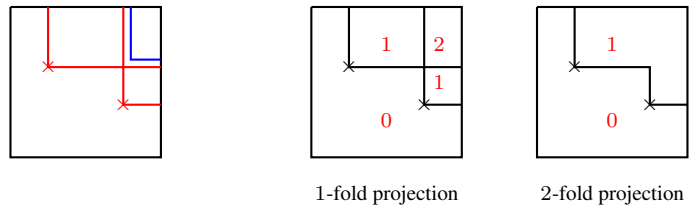
\begin{figure}[ht]
\centering
\begin{tikzpicture} 
\begin{scope}[xshift = 3cm]
\draw[thick] (0, 2) -- (0, 0) -- (2, 0) -- (2, 2) -- (0, 2);
\node at (0.5, 1.2) {$\red{\times}$};
\node at (1.5, 0.7) {$\red{\times}$};
\draw[thick, red] (0.5, 2) -- (0.5, 1.2) -- (1.5, 1.2) -- (1.5, 0.7) -- (2, 0.7);
\draw[thick, red] (1.5, 2) -- (1.5, 1.2) -- (2, 1.2);
\draw[thick, blue] (1.6, 2) -- (1.6, 1.3) -- (2, 1.3);
\end{scope}    

\begin{scope}[xshift =7cm]
\draw[thick] (0, 2) -- (0, 0) -- (2, 0) -- (2, 2) -- (0, 2);
\node at (0.5, 1.2) {$\times$};
\node at (1.5, 0.7) {$\times$};
\draw[thick] (0.5, 2) -- (0.5, 1.2) -- (1.5, 1.2) -- (1.5, 0.7) -- (2, 0.7);
\draw[thick] (1.5, 2) -- (1.5, 1.2) -- (2, 1.2);
\node at (1, -0.5) {$1$-fold projection};
\node at (1, 0.5) {$\red{0}$};
\node at (1, 1.5) {$\red{1}$};
\node at (1.7, 1) {$\red{1}$};
\node at (1.7, 1.5) {$\red{2}$};
\end{scope} 
\begin{scope}[xshift = 10cm]
\draw[thick] (0, 2) -- (0, 0) -- (2, 0) -- (2, 2) -- (0, 2);
\node at (0.5, 1.2) {$\times$};
\node at (1.5, 0.7) {$\times$};
\draw[thick] (0.5, 2) -- (0.5, 1.2) -- (1.5, 1.2) -- (1.5, 0.7) -- (2, 0.7);
\node at (1, -0.5) {$2$-fold projection};
\node at (1, 0.5) {$\red{0}$};
\node at (1, 1.5) {$\red{1}$};
\end{scope} 
\end{tikzpicture}
\caption{The weight of the $1$-fold projection is $t$ while the weight of the $2$-fold projection is $1-t$. Notice how the height function in the top-right corner of the $2$-fold projection is smaller than the height function in the top-right corner of the $1$-fold projection.}
\label{fig:badConfig}
\end{figure}

Therefore, we will define the weight of $(\bi, \bj; \bk, \bl)$ to be the minimum of the weights of its $r$-fold projections; however, instead of using the classical minimum function, we use a modified version defined as follows:

Fix $t \in [0, 1)$. Let $\mmin$ be a modification of the $\min$ function such that for $x_1, \dots, x_n \in \{0, t,1-t, 1\},$  
\begin{equation*}
\mmin\big(x_1, \dots, x_n\big) = 
\begin{cases}
0 &\qquad \text{if $x_i = t$ and $x_j = 1-t$ for some $i,j \in \{1,\dots,n\}$,}\\
\min\big(x_1, \dots, x_n\big) &\qquad \text{else.}\\
\end{cases}
\end{equation*}
For example, we have $\mmin(t, 1) = t$, $\mmin(1-t, 1) =1-t$ and $\mmin(t,1-t) = 0$. Note that we are treating $t$ and $1-t$ as indeterminates, so we can ignore the case where $t= 1-t$. With this definition, a configuration will only have nonzero weight if all of its $r$-fold projections have nonzero weight and if $t$ and $1-t$ are not both weights of different $r$-fold projections.

\begin{definition}\label{def:nLmatrix}
	Fix arbitrary $n \in \Z_{\geq 1}$. 
	We define the matrix $\LL^n$ via 
	\begin{equation}\label{eq:weight}
	\LL^n(\bi, \bj; \bk, \bl) = \mmin_{r \in \{1, \dots, n\}} \Big(\LL^1 (\mfs_r (\bi), \mfs_r (\bj); \mfs_r (\bk), \mfs_r (\bl))\Big).
	\end{equation}
\end{definition}
The table in Figure \ref{fig:computing_weights} illustrates how to compute the weights of some three-colored configurations. For the following configurations, we use the colors red, blue, and orange to represent the colors $1$, $2$, and $3$ respectively. We also draw all of the two-colored configurations with non-zero weights in Appendix \ref{sec:twocolorfig} for further illustration.

\begin{figure}[ht]
	\centering
		\begin{tabular}{|c|c|c|c|c|}
			\hline
			Configuration & $1$-fold projection  & $2$-fold projection  & $3$-fold projection & Total weight\\
			\hline
			\begin{tikzpicture}[scale = 1.2]
			\draw[thick][white] (-0.5, 0) -- (0.5,0);
	        \draw[thick][white] (0, -0.6) -- (0,0.6);
	        \draw[red, thick] (-0.1, -0.5) -- (-0.1, 0.5);
            \draw[blue, thick] (0, -0.5) -- (0, 0.5); 
            \draw[orange, thick] (0.1, -0.5) -- (0.1, -0.2) -- (0.5, -0.2); 
            \draw[blue, thick] (-0.5, -0.1) -- (0.5, -0.1);
            \node at (0, -0.8) {};
			\end{tikzpicture}
			&
			\begin{tikzpicture}[scale = 1.2]
			\draw[thick][white] (-0.5, 0) -- (0.5,0);
	        \draw[thick][white] (0, -0.6) -- (0,0.6);
	        \draw[black, thick] (0, -0.5) -- (0, 0.5);
            \node at (0, -0.8) {$1$ };
			\end{tikzpicture}
			&
			\begin{tikzpicture}[scale = 1.2]
			\draw[thick][white] (-0.5, 0) -- (0.5,0);
	        \draw[thick][white] (0, -0.6) -- (0,0.6);
            \draw[black, thick] (-0.5, -0.1) -- (0.5, -0.1);
            \node at (0, -0.8) {$1$};
			\end{tikzpicture}
			&
			\begin{tikzpicture}[scale = 1.2]
	        \draw[thick][white] (-0.5, 0) -- (0.5,0);
	        \draw[thick][white] (0, -0.6) -- (0,0.6);
            \draw[black, thick] (-0.5, -0.1) -- (0, -0.1)--(0,-0.5);
            \node at (0, -0.8) {$1 - t$};
			\end{tikzpicture}
			&
			\begin{tikzpicture}[scale = 1.2]
			\draw[fill][white] (0.5, 0) circle (0.05);
        	\draw[thick][white] (0, 0) -- (1,0);
        	\draw[thick][white] (0.5, -0.5) -- (0.5,0.5);
        	\node at (0.5, 0) {$\mmin(1,1,1-t) = 1-t$};
			\end{tikzpicture}
           	
			\\
			\hline
    		\begin{tikzpicture}[scale = 1.2]
    	    \draw[thick][white] (-0.5, 0) -- (0.5,0);
    	   \draw[thick][white] (0, -0.6) -- (0,0.6);
            \draw[red, thick]
            (-0.5, -0.1) -- (0.5, -0.1);
            \draw[red, thick](0, -0.5) -- (0, 0.5); 
            \draw[blue, thick] (-0.5, 0)--(0.5, 0);
            \draw[orange, thick] (-0.5, 0.1)--(-0.1, 0.1)--(-0.1, 0.5);
            \node at (0, -0.8) { };
    		\end{tikzpicture}
			&
    		\begin{tikzpicture}[scale = 1.2]
    	    \draw[thick][white] (-0.5, 0) -- (0.5,0);
    	   \draw[thick][white] (0, -0.6) -- (0,0.6);
    	   \draw[black, thick]
            (-0.5, -0) -- (0.5, -0);
            \draw[black, thick](0, -0.5) -- (0, 0.5); 
            \node at (0, -0.8) {$t$ };
    		\end{tikzpicture}
			&
    		\begin{tikzpicture}[scale = 1.2]
     	    \draw[thick][white] (-0.5, 0) -- (0.5,0);
    	    \draw[thick][white] (0, -0.6) -- (0,0.6);\draw[black, thick](0, -0.5) -- (0, 0.5);
            \node at (0, -0.8) {$1$};
    		\end{tikzpicture}
			&
    		\begin{tikzpicture}[scale = 1.2]
    		\draw[thick][white] (-0.5, 0) -- (0.5,0);
    	   \draw[thick][white] (0, -0.6) -- (0,0.6);
            \draw[black, thick]
            (-0.5, -0) -- (0, -0);
            \draw[black, thick](0, -0.5) -- (0, 0); 
            \node at (0, -0.8) {$1-t$ };
		    \end{tikzpicture}
			&
        	\begin{tikzpicture}[scale = 1.2]
        	\draw[fill][white] (0.5, 0) circle (0.05);
        	\draw[thick][white] (0, 0) -- (1,0);
        	\draw[thick][white] (0.5, -0.5) -- (0.5,0.5);
        	\node at (0.5, 0) {$\mmin(t,1,1-t) = 0$};
        	\end{tikzpicture}	
                    
			\\
			\hline
			
\end{tabular}
\caption{Examples of how to compute the weights of three-colored configurations} 
	\label{fig:computing_weights}
\end{figure}

It is not a priori clear that $\LL^n$ is stochastic; this will be proved in Section \ref{sec:multicolor}. The family of stochastic matrices $\{\LL^n,  n \geq 1\}$ satisfies the following three properties: 

\hypertarget{hyp:property1}{\textbf{Property 1}}
(Color Ignorance): Lines with higher priority colors ignore lines with lower priority colors (see Figure \ref{fig:properties1-2} for illustration). For instance, the lines with colors belonging to $\{1, \dots, m\}$ ignore the behavior of lines with colors greater than $m$. This means that if we sample the $n$-colored model and ignore the lines with color greater than $m$, the remaining lines will reduce to the $m$-colored model. In particular, if we ignore the lines of color $2$ in the two-colored model, then the lines of color $1$ have the same distribution as the single-colored $t$-PNG model. On the other hand, if we ignore the lines of color $1$, the lines of color $2$ do \textit{not} have the same distribution as the single-colored $t$-PNG model. Because of this property, we will be able to define the $X_{m,n}$ in a way that maintains ergodicity, as we will see in Section \ref{sec:subadditivity}.  
	
\hypertarget{hyp:property2}{\textbf{Property 2}} (Mod 2 Erasure): Fix arbitrary integers $1 \leq r_1 < \dots < r_m \leq n$. We can project the matrix $\LL^n$ to 
	$\LL^m$ if we replace the colors in $\{r_{k-1}+1, \dots, r_k\}$ with color $k$ for each $k \in\{1, \dots, m\}$ and then erase every pair of lines that has the same color (see Figure \ref{fig:properties1-2} for illustration). Because of this property, we can project the colored model down to the single-colored $t$-PNG model. This will ensure that the random variables $X_{0,n}$ will record the height function of the $t$-PNG model, which is the quantity that we are interested in studying.


\hypertarget{hyp:property3}{\textbf{Property 3}} (Monotonicity of the Height Function): 
Suppose we have a sampling of the two-colored $t$-PNG configuration on a rectangle $[0,x] \times [0,y]$ where all nucleations are of the first color, and all sources and sinks are of the second color. Let $N^1(x,y)$ denote the height function at $(x,y)$ of the $1$-fold projection of all of the lines in the rectangle. Let $N^2(x,y)$ denote the height function at $(x,y)$ of the $2$-fold projection of the lines. Then $$N^1(x,y) \leq N^2(x,y).$$ In other words, adding a second color to the model does not decrease the height function. This property will be crucial to proving the superadditivity of the random variables $X_{m,n}$.

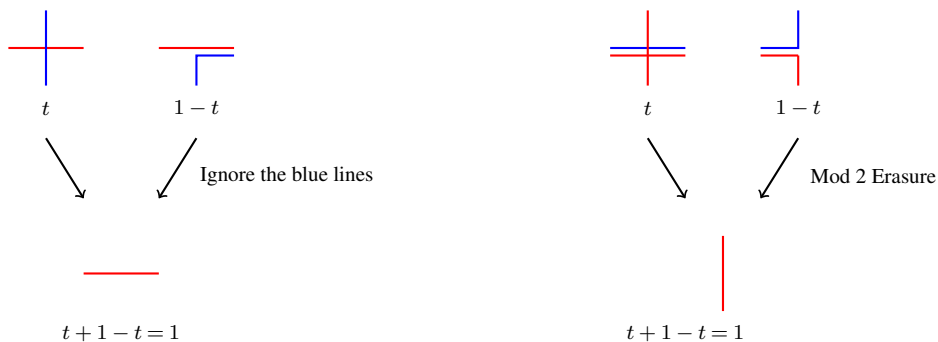
\begin{figure}[t]
	\centering
	\begin{tikzpicture}
	\begin{scope}[xshift = 0cm]
	\draw[red, thick] (0, 0) -- (1, 0);
	\draw[blue, thick] (0.5, -0.5) -- (0.5, -0.1) -- (0.5, 0.5);
	\node at (0.5, -0.8) {$t$};
	\draw[black, thick, ->] (0.5, -1.2) -- (1, -2);
	\end{scope}
	\begin{scope}[xshift = 2cm]
	\draw[red, thick] (0, 0) -- (1, 0);
	\draw[blue, thick] (0.5, -0.5) -- (0.5, -0.1) -- (1, -0.1);
	\node at (0.5, -0.8) {$1-t$};
	\draw[black, thick, ->] (0.5, -1.2) -- (0, -2);
	\node at (1.7, -1.7) {Ignore the blue lines};
	\end{scope}
	\begin{scope}[yshift = -3cm, xshift = 1cm]
	\draw[red, thick] (0, 0) -- (1, 0);
	\node at (0.5, -0.8) {$t +1-t = 1$};
	\end{scope}
	
	\begin{scope}[xshift = 2cm]
	\begin{scope}[xshift = 6cm]
	\draw[blue, thick] (0, 0) -- (1, 0);
	\draw[red, thick] (0, -0.1) -- (1, -0.1);
	\draw[red, thick] (0.5, -0.5) -- (0.5, 0.5);
	\node at (0.5, -0.8) {$t$};
	\draw[black, thick, ->] (0.5, -1.2) -- (1, -2);
	\end{scope}
	\begin{scope}[xshift = 8cm]
\draw[blue, thick] (0, 0) -- (0.5, 0) -- (0.5, 0.5);
	\draw[red, thick] (0, -0.1) -- (0.5, -0.1);
	\draw[red, thick] (0.5, -0.5) -- (0.5, -0.1);
	\node at (0.5, -0.8) {$1-t$};
	\draw[black, thick, ->] (0.5, -1.2) -- (0, -2);
	\node at (1.5, -1.7) {Mod 2 Erasure};
	\end{scope}
	\begin{scope}[yshift = -3cm, xshift = 7cm]
	\draw[red, thick] (0.5, -0.5) -- (0.5, 0.5);
	\node at (0, -0.8) {$t +1-t = 1$};
	\end{scope}
	\end{scope}
	\end{tikzpicture}
	\caption{Left Panel: Color Ignorance. The top two configurations have the same input lines, and if we ignore the blue lines then they both equal the same single-colored configuration---a horizontal line. In fact, these are the only two-colored configurations with the given input lines whose first color is a horizontal line. Therefore the sum of their weights equals the weight of the horizontal line. Right Panel: Mod 2 Erasure. The top two configurations are the only two-colored configurations with the given input lines whose $2$-fold projection is a vertical line. Therefore the sum of their weights equals the weight of the horizontal line.}
	\label{fig:properties1-2}
\end{figure}

The proofs of the first two properties are in Section \ref{sec:multicolor}. The third property follows from Lemma \ref{lem:attractivity}. Note that if we take $t=0$ then the two-colored $t$-PNG model degenerates to the two-colored PNG model defined in \cite{HammersleySourcesAndSinks}. To our best knowledge, it seems that the sampling rule of the colored $t$-PNG model has not been defined earlier.

In \cite{HammersleySourcesAndSinks}, the authors take the viewpoint of interacting particle systems. The lines of the second color for the PNG model are the trajectory lines of second class particles for the Hammersley's process. This was one motivation for arriving at our definition of the colored model. We define our colored model so that the lines of the second color are the second class particles for the $t$-Hammersley process that we introduce in Section \ref{sec:upper-bound}. A second motivation for our definition comes from the colored stochastic six vertex model as detailed in the next subsection.

\subsubsection{Connection to the stochastic six vertex model} 
The stochastic six vertex (S6V) model is a classical model in two-dimensional statistical physics. The model was introduced in \cite{gwa1992six} as a special case of the six vertex model \cite{lieb1967residual, baxter2016exactly}. We associate six possible configurations to each vertex in $\mathbb{Z}_{\geq 0}^2$ as illustrated in Figure \ref{fig:vertexconfig}. The weight of each configuration is parameterized by two parameters $b_1, b_2 \in [0, 1]$. We view the lines entering the vertex from the left and the bottom as input lines and view the lines leaving to the right and above as output lines. The S6V model is stochastic since if we are given the number of input lines from the left and bottom, the sum of the weights of all possible configurations with that input equals $1$.

We view the S6V model as a stochastic path ensemble on $\mathbb{Z}_{\geq 0}^2$.
We fix boundary conditions on the axis $\mathbb{Z}_{\geq 0} \times \{0\}$ (resp. $\{0\} \times \mathbb{Z}_{\geq 0}$) which indicate whether there is an input line entering each vertex along the axis from the bottom (resp. left). Starting from the vertex $(0, 0)$, we tile the given site with one of the six vertex configurations where we only consider configurations whose input lines match the input lines of the given vertex. We then assign an allowed configuration with probability given by the weight of the configuration. This tiling construction then progresses sequentially in the linear order $(0, 0),(1, 0),(0, 1),(1, 1),(2, 0),(1, 1), (0, 2), \dots$ to the entire quadrant (see the left panel of Figure \ref{fig:sampling}). 

\begin{figure}[ht]
	\centering
		\begin{tabular}{|c|c|c|c|c|c|c|}
			\hline
			Type & I & II & III & IV & V & VI \\
			\hline
			\begin{tikzpicture}[scale = 1.5]
			\draw[fill][white] (0.5, 0) circle (0.05);
			\draw[thick][white] (0, 0) -- (1,0);
			\draw[thick][white] (0.5, -0.5) -- 
			(0.5,0.5);
			\node at (0.5, 0) {Configuration};
			\end{tikzpicture}
			&
			\begin{tikzpicture}[scale = 1.2]
			\draw[fill] (0.5, 0) circle (0.05);
			\draw[thick] (0, 0) -- (1,0);
			\draw[thick] (0.5, -0.5) -- (0.5,0.5);
			\end{tikzpicture}
			&
			\begin{tikzpicture}[scale = 1.2]
			\draw[thick][white] (0, 0) -- (1,0);
			\draw[thick][white] (0.5, -0.5) -- (0.5,0.5);
			\draw[fill] (0.5, 0) circle (0.05);
			\end{tikzpicture}
			&
			\begin{tikzpicture}[scale = 1.2]
			\draw[thick][white] (0, 0) -- (1,0);
			\draw[thick] (0.5, -0.5) -- (0.5,0.5);
			\draw[fill] (0.5, 0) circle (0.05);
			\end{tikzpicture}
			&
			\begin{tikzpicture}[scale = 1.2]
			\draw[thick][white] (0, 0) -- (0.5,0);
			\draw[thick][white] (0.5, 0) -- (0.5, 0.5);
			\draw[thick] (0.5, 0) -- (1, 0);
			\draw[thick] (0.5, -0.5) -- (0.5, 0);
			(0.5,0.5);
			\draw[fill] (0.5, 0) circle (0.05);
			\end{tikzpicture}
			&
			\begin{tikzpicture}[scale = 1.2]
			\draw[thick] (0, 0) -- (1,0);
			\draw[thick][white] (0.5, -0.5) -- (0.5,0.5);
			\draw[fill] (0.5, 0) circle (0.05);
			\end{tikzpicture}
			&
			\begin{tikzpicture}[scale = 1.2]
			\draw[thick] (0, 0) -- (0.5,0);
			\draw[thick] (0.5, 0) -- (0.5, 0.5);
			\draw[thick][white] (0.5, 0) -- (1, 0);
			\draw[thick][white] (0.5, -0.5) -- (0.5, 0);
			(0.5,0.5);
			\draw[fill] (0.5, 0) circle (0.05);
			\end{tikzpicture}
			\\
			\hline
			Weight 
			& 1 & 1 & $b_1$ & $1- b_1$ & $b_2$ & $1-b_2$\\
			\hline
\end{tabular}
\caption{Six types of configurations for the S6V model.}
	\label{fig:vertexconfig}
\end{figure}

To relate this to the $t$-PNG model, we horizontally complement the S6V model. In other words, if there is a horizontal line, we erase it; if there is no horizontal line, we add it (see the right panel of Figure \ref{fig:sampling}).  In Figure \ref{fig:cs6v}, we show the vertex configurations after horizontal complementation.
\begin{figure}[ht]
	\centering
		\begin{tabular}{|c|c|c|c|c|c|c|}
			\hline
			Type & I & II & III & IV & V & VI \\
			\hline
			\begin{tikzpicture}[scale = 1.5]
			\draw[fill][white] (0.5, 0) circle (0.05);
			\draw[thick][white] (0, 0) -- (1,0);
			\draw[thick][white] (0.5, -0.5) -- 
			(0.5,0.5);
			\node at (0.5, 0) {Configuration};
			\end{tikzpicture}
			&
			\begin{tikzpicture}[scale = 1.2]
			\draw[thick, white] (0, 0) -- (1,0);
			\draw[thick] (0.5, -0.5) -- (0.5,0.5);
			\draw[fill] (0.5, 0) circle (0.05);
			\end{tikzpicture}
			&
			\begin{tikzpicture}[scale = 1.2]
			\draw[thick] (0, 0) -- (1,0);
			\draw[thick][white] (0.5, -0.5) -- (0.5,0.5);
			\draw[fill] (0.5, 0) circle (0.05);
			\end{tikzpicture}
			&
			\begin{tikzpicture}[scale = 1.2]
			\draw[thick] (0, 0) -- (1,0);
			\draw[thick] (0.5, -0.5) -- (0.5,0.5);
			\draw[fill] (0.5, 0) circle (0.05);
			\end{tikzpicture}
			&
			\begin{tikzpicture}[scale = 1.2]
			\draw[thick] (0, 0) -- (0.5,0);
			\draw[thick][white] (0.5, 0) -- (0.5, 0.5);
			\draw[thick, white] (0.5, 0) -- (1, 0);
			\draw[thick] (0.5, -0.5) -- (0.5, 0);
			(0.5,0.5);
			\draw[fill] (0.5, 0) circle (0.05);
			\end{tikzpicture}
						&
			\begin{tikzpicture}[scale = 1.2]
			\draw[thick, white] (0, 0) -- (1,0);
			\draw[thick][white] (0.5, -0.5) -- (0.5,0.5);
			\draw[fill] (0.5, 0) circle (0.05);
			\end{tikzpicture}
			&
			\begin{tikzpicture}[scale = 1.2]
			\draw[thick] (0.5, 0) -- (1,0);
			\draw[thick] (0.5, 0) -- (0.5, 0.5);
			\draw[thick][white] (0, 0) -- (0.5, 0);
			\draw[thick][white] (0.5, -0.5) -- (0.5, 0);
			(0.5,0.5);
			\draw[fill] (0.5, 0) circle (0.05);
			\end{tikzpicture}
			\\
			\hline
			Weight 
			& 1 & 1 & $b_1$ & $1- b_1$ & $b_2$ & $1 - b_2$\\
			\hline
\end{tabular}
\caption{Six types of configurations for the S6V model after horizontal complementation.}
	\label{fig:cs6v}
\end{figure}

As done in \cite{aggarwal2021deformed}, if we scale the weights  $b_1 \to t$ and $b_2 \to 1$ in an appropriate way and simultaneously scale the discrete lattice to the continuum with certain boundary data, the complemented model in the previous paragraph converges to the $t$-PNG model. One can observe that the weights in Figure \ref{fig:cs6v} reduce to that of the $\LL^1$ matrix in the scaling limit.

It is natural to ask if our colored $t$-PNG model is related to the colored S6V model \cite{borodin2018coloured}. We focus on the case with two colors and recall the definition of the two-colored S6V model from \cite[Section 2]{borodin2018coloured} (the multicolored S6V model was also defined therein). As usual, we use red to denote the higher priority color and blue to denote the lower priority color. The number of output lines for each color must equal the number of input lines for that color; however, unlike for the colored $t$-PNG model, there can be at most one line emanating from the vertex in each direction. The vertex weight of a two-colored S6V configuration is then defined to be the weight of the single-colored S6V configuration obtained by just considering the lines of the highest-priority color present in the configuration.
\begin{figure}[ht]
    \centering
\begin{tikzpicture}
\begin{scope}[xshift = 0]
\foreach \x in {1, 2, 3, 4, 5}
{
\foreach \y in {1, 2, 3, 4, 5}
\draw[fill] (\x, \y) circle (0.05);
}

\draw (0.5, 1) -- (1, 1) -- (1, 3) -- (2, 3) -- (2, 4) -- (4, 4) -- (4, 5.5);
\draw (2, 0.5) -- (2, 2) -- (5.5, 2);
\draw (4, 0.5) -- (4, 3) -- (5.5, 3);
\draw (0.5, 5) -- (3, 5) -- (3, 5.5);
\node at (0.6, 0.6) {$(0, 0)$};
\end{scope}
\begin{scope}[xshift = 7cm]
\foreach \x in {1, 2, 3, 4, 5}
{
\foreach \y in {1, 2, 3, 4, 5}
\draw[fill] (\x, \y) circle (0.05);
}

\draw (1, 1) -- (5.5, 1);
\draw (0.5, 2) -- (2, 2);
\draw (0.5, 3) -- (1, 3);
\draw (2, 3) -- (4, 3);
\draw (0.5, 4) -- (2, 4);
\draw (4, 4) -- (5.5, 4); 
\draw (3, 5) -- (5.5, 5);
\draw (1, 1) -- (1, 3);
\draw (2, 0.5) -- (2, 2);
\draw (2, 3) -- (2, 4);
\draw (4, 0.5) -- (4, 3);
\draw (3, 5) -- (3, 5.5);
\draw (4, 4) -- (4, 5.5);
\node at (0.6, 0.6) {$(0, 0)$};
\end{scope}
\end{tikzpicture}
\caption{Left panel: A sampling of the S6V model on the first quadrant. Right panel: The S6V model after complementing the horizontal lines.}
\label{fig:sampling}
\end{figure}
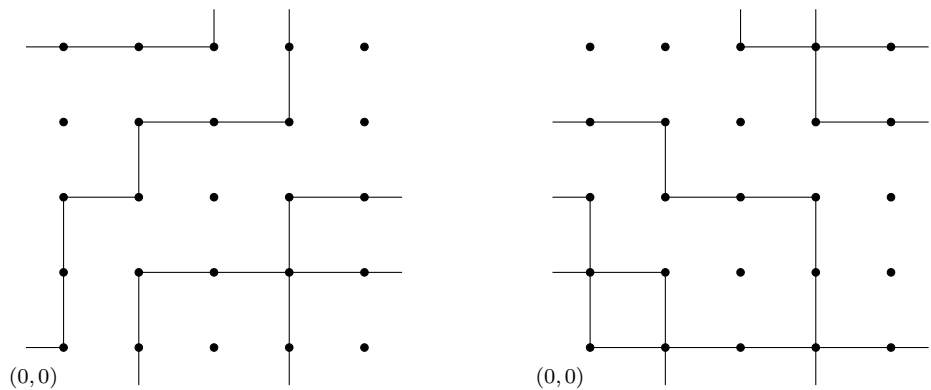

There is also a relation between the two-colored $t$-PNG model and the two-colored S6V model via horizontal complementation. Given a two-colored $t$-PNG configuration with at most one line in the vertical direction, we can perform the following horizontal complementation (referred to as ``hc" in the figures below). If there is a horizontal \textbf{red} line, we erase it; if there is no horizontal red line, we add it. We leave the blue lines unchanged. One can check that the resulting configuration is a configuration for the two-colored S6V model where $b_1 = t$ and $b_2 = 1$ and that the two configurations have the same weights in their respective models. However, this procedure will not work for two-colored $t$-PNG configurations with both blue and red lines in the vertical direction (see Figure \ref{fig:two-colored}). This is because the complemented configuration would still have two lines in the vertical direction, while the two-colored S6V model allows at most one line in each direction. Therefore, not every two-colored $t$-PNG configuration can be obtained through horizontal complementation of the two-colored S6V model.

\begin{figure}[ht]
\centering
\begin{tikzpicture}
\begin{scope}[xshift= 3cm]
\draw[thick, blue] (-0.5, 0) -- (0.5, 0); 
\draw[thick, red] (0, -0.5) -- (0, 0.5); 
\node at (0, -1) {$t$};
\end{scope}
\node at (1.5, 0) {$\overset{\text{hc}}{\longleftrightarrow}$};
\begin{scope}[xshift = 0cm]
\draw[thick, blue] (-0.5, -0.05) -- (0.5, -0.05);
\draw[thick, red] (-0.5, 0.05) -- (0.5, 0.05);
\draw[thick, red] (0, -0.5) -- (0, 0.5); 
\node at (0, -1) {$t$};
\end{scope}
\node at (0, 1) {$t$-PNG};
\node at (3, 1) {S6V};
\begin{scope}[xshift = 9cm]
\draw[thick, red] (-0.05, -0.5) -- (-0.05, 0.5);
\draw[thick, blue] (0.05, -0.5) -- (0.05, 0.5); 
\node at (0, -1) {$1$};
\node at (0, 1) {$t$-PNG};
\node at (1.5, 0) {$\overset{\text{hc}}{\longleftrightarrow}$};
\end{scope}

\begin{scope}[xshift = 12cm]
\draw[thick, red] (-0.05, -0.5) -- (-0.05, 0.5);
\draw[thick, blue] (0.05, -0.5) -- (0.05, 0.5); 
\draw[thick, red] (-0.5, 0) -- (0.5, 0);
\node at (0, -1) {invalid};
\node at (0, 1) {S6V};
\end{scope}
\end{tikzpicture}
\caption{Left panel: When there is at most one line in the vertical direction, one can obtain the two-colored $t$-PNG model by horizontally complementing the red lines. Moreover, the weights of the two-colored  $t$-PNG and S6V configurations match. Right panel: When there are two lines in the vertical direction, one cannot obtain the $t$-PNG model via horizontal complementation, since the S6V model allows at most one line in the vertical direction.}
\label{fig:two-colored}
\end{figure}
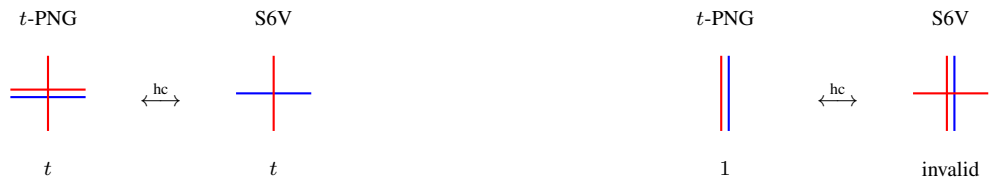

\subsection{A Burke's theorem for the $t$-PNG model}
The original Burke’s theorem \cite{burke1956output} states that the departure process
of an M/M/1 queue with a Poisson arrival process is a Poisson process. Generalizations of Burke's theorem have been proved for last passage models \cite{HammersleySourcesAndSinks, balazs2006cube, seppalainen2017variational}, polymers \cite{o2001brownian, seppalainen2012scaling, chaumont2018characterizing}, and stochastic vertex models \cite{aggarwal2018current, imamura2020stationary, lin2020kpz}. Note that these models all admit a stationary version and that Burke's theorem is a stronger property than stationarity.

The $t$-PNG model has a stationary version, which will be an important tool in the proof of our main theorem and is constructed in Section \ref{sec:upper-bound}. As a natural extension,  we also prove a Burke's theorem for the $t$-PNG model. Our result extends Theorem 3.1 in \cite{HammersleySourcesAndSinks} from $t = 0$ to general $t \in [0, 1)$.

\begin{theorem}[Burke's Theorem] \label{thm:burke}
Fix $T_1, T_2 > 0$. Consider the $t$-PNG model on $[0, T_1] \times [0, T_2]$ with a Poisson process of sources on of intensity $\lambda$ on the bottom boundary, a Poisson process of sinks of intensity $\frac{1}{\lambda(1-t)}$ on the left boundary, and Poisson process of nucleations of intensity $1$ in the interior of the box. We choose all three Poisson processes to be independent. Denote this process by $L_{\lambda}$ and let $L_{\lambda}^{\text{corner}}$ denote the set of corner points, let $L_{\lambda}^{\text{in}}$ denote the entry points of paths on the right boundary, and let $L_{\lambda}^{\text{out}}$ denote the exit points of paths on the top boundary. Then 
\begin{enumerate}[leftmargin = 2em, label = (\roman*)]
\item \label{item:stationary}
$L_{\lambda}^{\text{corner}}$ is a Poisson point process with intensity $1$ in $[0, T_1] \times [0, T_2]$, $L_{\lambda}^{\text{in}}$ is a Poisson point process of intensity $\frac{1}{\lambda(1-t)}$, and $L_{\lambda}^{\text{out}}$ is a Poisson point process of intensity $\lambda$. 
\item All three Poisson processes are independent.
\end{enumerate}
\end{theorem}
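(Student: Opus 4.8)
The plan is to derive both \ref{item:stationary} and (ii) from a single symmetry statement: the law of the whole random line configuration produced by the $t$-PNG model $L_{\lambda}$ is invariant under the point reflection $R\colon (x,y)\mapsto(T_1-x,T_2-y)$ of the rectangle $[0,T_1]\times[0,T_2]$. Granting this, the theorem follows quickly. A point reflection turns a nucleation (a pair of lines emanating upward and rightward, with nothing entering from below or from the left) into a corner point (a pair of lines entering from below and from the left, with nothing leaving), and vice versa; it keeps crossing points and through-lines as they are; and it exchanges the bottom edge with the top edge and the left edge with the right edge. Consequently $R$ maps the input triple $(\text{nucleations},\text{sources},\text{sinks})$, which lives on the open rectangle, the bottom edge and the left edge, onto the triple $(L_{\lambda}^{\text{corner}},L_{\lambda}^{\text{out}},L_{\lambda}^{\text{in}})$, which lives on the rectangle, the top edge and the right edge. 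Since $R$ is measure-preserving on each of these three pieces and the three input processes are independent Poisson processes of intensities $1$, $\lambda$ and $\frac1{\lambda(1-t)}$, their $R$-image is again a triple of independent Poisson processes with the same three intensities; by the assumed reflection invariance this image has the same law as $(L_{\lambda}^{\text{corner}},L_{\lambda}^{\text{out}},L_{\lambda}^{\text{in}})$, which is precisely \ref{item:stationary} together with (ii).

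Thus the heart of the proof is the reflection invariance of the random line configuration, and this is where I expect the real work to lie. I would establish it by proving that the vertex dynamics of the model is \emph{reversible} for the relevant boundary intensities, not merely stationary. Reading the configuration along NE-going diagonals, the state is the occupancy pattern of the horizontal and vertical edges cut by the current diagonal, and one update consists of applying the vertex kernel $\LL^1$ and, independently, planting a nucleation at infinitesimal intensity. Using the stationary $t$-PNG model constructed in Section \ref{sec:upper-bound}, one knows that for these intensities the edge occupancies along a diagonal are independent, with the vertical-edge density equal to the source intensity $\lambda$ and the horizontal-edge density equal to the sink intensity $\frac1{\lambda(1-t)}$; the compatibility relation is exactly $\lambda\cdot\frac1{\lambda(1-t)}=\frac1{1-t}$, which is the same identity that forces the two line densities to be constant and, simultaneously, forces corner points to occur at intensity $1$. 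The claim to verify is that this single update is reversible with respect to that product measure, the time-reversed update being the $180^\circ$-rotated update of the \emph{same} model --- equivalently, a finite detailed-balance computation identifying, under the edge exchange induced by $R$, the forward and backward local weights, with nucleations and annihilations playing mirror roles. Propagating this reversibility across the rectangle then upgrades stationarity to full reflection invariance.

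To make the diagonal/edge-state formalism rigorous despite the $t$-PNG model being a continuum object, one clean route is to run the argument first for the horizontally complemented stochastic six-vertex model with i.i.d.\ Bernoulli boundary data on the two axes --- where the detailed-balance check is a comparison of finitely many explicit weights in $b_1,b_2$, and amounts, after complementation, to the known stationarity of the stochastic six-vertex model --- and then to pass to the scaling limit $b_1\to t$, $b_2\to 1$ with vanishing lattice spacing that is already used to construct the stationary model. The main obstacle is therefore twofold: (a) pinning down and verifying the single-vertex detailed-balance identity for the correct Bernoulli parameters, including the bookkeeping that makes the $180^\circ$-rotated complemented model the same model; and (b) checking that reflection invariance, including the joint independence of the bulk corner points with the boundary processes $L_{\lambda}^{\text{in}}$ and $L_{\lambda}^{\text{out}}$, survives the continuum limit. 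For (b) one reduces, as usual, to convergence of the configuration restricted to an arbitrary finite collection of boxes and boundary intervals, and then invokes the almost sure local finiteness of the $t$-PNG model established earlier in the paper.
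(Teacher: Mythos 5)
Your reduction of Burke's theorem to invariance of the random line configuration under the point reflection $R\colon(x,y)\mapsto(T_1-x,T_2-y)$ is correct, and it is the same reduction the paper makes in different language: the paper proves $X_V\overset{d}{=}X_H$, where $X_V$ is the vertically reflected process and $X_H$ is the time-reversed (equivalently, horizontally reflected) one, which amounts precisely to your $R$-invariance since $R$ is the composition of the two reflections. Your accounting of what $R$ does to nucleations, corner points, sources, sinks, and the two boundary point sets is also right, and granting $R$-invariance the two items of the theorem follow as you argue.

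Where you depart from the paper, and where the proposal remains incomplete, is the proof of $R$-invariance itself. The paper never discretizes, never reads the configuration along NE diagonals, and never checks a vertex-level detailed-balance identity. It works directly in continuous time: the $t$-Hammersley generator $G$ is written via the operators $\mathcal{R}_z^i,\mathcal{L}^i$ (Proposition~\ref{generator-X}); its adjoint $G^*$ with respect to the stationary Poisson measure $\mu$ is computed explicitly by a change of variables (Lemma~\ref{lem:dualGen}); and one then observes that the resulting formula for $G^*$ is \emph{exactly} the generator of the mirror-image particle system $X_V$ in which particles jump left-to-right. Since $G^*$ is standardly the generator of the time reversal $X_H$, and both $X_V$ and $X_H$ start from $\mu$, they agree in law --- this is $R$-invariance obtained in one step. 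Your alternative route through the horizontally complemented stochastic six-vertex model and a $b_1\to t$, $b_2\to 1$ scaling limit is plausible and close to how Burke-type theorems are often proved for discrete vertex models, but it is not carried out: the finite detailed-balance identity is not written down, and neither is the argument that the joint law of $(L_\lambda^{\text{corner}},L_\lambda^{\text{in}},L_\lambda^{\text{out}})$, including the independence in item (ii), survives the continuum limit. One warning about the sketch as written: appealing to ``the known stationarity of the S6V model'' is not the statement you need. Stationarity preserves the single-time marginal but gives neither the law of the bulk corner points nor their joint independence with the two boundary processes; what Burke's theorem requires is the reversibility-up-to-reflection, which is strictly stronger, and is exactly the extra structure that the generator duality (or, in your framework, an actual detailed-balance verification) supplies.
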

The proof of Theorem \ref{thm:burke} is given in Section \ref{sec:upper-bound}. 

\subsection{Proof idea}
As explained after Remark \ref{rmk:localconvergence}, unlike in the case of the PNG model, we can no longer directly apply the superadditive ergodic theorem to the $t$-PNG model using attractivity. Instead, we need to define a colored version of the $t$-PNG model which will allow us to construct a family of random variables $\{X_{m, n}: 0 \leq m \leq n\}$ that satisfies the conditions of the superadditive ergodic theorem.

Definition \ref{def:nLmatrix} for the matrices $\{\LL^n: n \geq 1\}$ follows from a sophisticated guess based on Properties \hyperlink{hyp:property1}{1}--\hyperlink{hyp:property3}{3};
however, it is not immediately clear that these matrices are stochastic, and it also takes some work to show that all of the desired Properties \hyperlink{hyp:property1}{1}--\hyperlink{hyp:property3}{3} are indeed satisfied. The proof of stochasticity follows from an induction argument. Properties \hyperlink{hyp:property1}{1}--\hyperlink{hyp:property3}{3} then follow from a case-by-case study using the stochasticity and the fact that the entries of $\{\LL^n: n \geq 1\}$ belong to $\{0, t,1-t, 1\}$. 

Having defined the stochastic matrices $\{\LL^n: n \geq 1\}$, we proceed to prove the hydrodynamic limit theorem.  
The idea is to consider a colored $t$-PNG model and define $X_{m, n}$ to be a color-dependent version of the height function restricted to $[m, n] \times [m, n]$. We assign the Poisson nucleations with different colors so that points with smaller $x$-coordinates or $y$-coordinates have lower priority. We can now define $X_{m,n}$ in such a way so that it only depends on nucleations inside the box $[m, n] \times [m, n]$ and ignores all lines entering from the bottom or the left since those lines will have lower priority. Therefore, the random variables $X_{m,n}$ will be stationary and independent on disjoint boxes and hence ergodic. 
The superadditivity intrinsically follows from a coupling argument that says that allowing more lines to enter the boundary only increases the height function. We can now apply the superadditive ergodic theorem \cite{liggett1985improved} to the family of random variables $\{X_{m, n}: 0 \leq m \leq n\}$. By the property of mod 2 erasure, the colored $t$-PNG model reduces to the single-colored $t$-PNG model after color projection. Hence, we have
\begin{equation}\label{eq:lln}
\lim_{n \to \infty} \frac{N(n,n)}{n} = \sup_{n \geq 1} \frac{\mathbb{E}[N(n,n)]}{n}=\gamma, \qquad \text{ a.s.}, 
\end{equation}
where $\gamma$ is some constant in $[0, \infty].$ 

To determine the value of this constant, we adopt an idea from \cite{groeneboom2001ulam}. 
Before carrying out that idea in the next paragraph, we need to first show that $\gamma \in (0, \infty)$. We do this by deriving an upper bound and lower bound for $\gamma$. The lower bound can be simply obtained via a coupling with the PNG model. To obtain the upper bound, we generalize \cite{aldous-diaconis} and view the $t$-PNG model as an interacting particle system where the vertical lines play the role of particle trajectories. 
We add sources and sinks on the left and bottom boundary and obtain a stationary version of the $t$-PNG model. 
A coupling with the stationary model together with the attractive property provides an upper bound for $\gamma$.

In \cite{groeneboom2001ulam}, the author proposed a soft argument for computing the constant $\gamma$ in the case of $t = 0$. The key idea is to relate the computation of $\gamma$ to  the 
law of large numbers (LLN) for the number of $\alpha$-points. When $t = 0$, the $\alpha$-points are exactly the Poisson nucleations and the LLN follows immediately.  For $t > 0$, 
the set of $\alpha$-points is the union of 
Poisson nucleations and crossing points. We prove an LLN 
for the number of $\alpha$-points using the fact that the number of $\alpha$-points on the horizontal line emanating from a Poisson nucleation is asymptotically a geometric random variable.

\subsection{Literature review}
In \cite{MR0129165}, Ulam first posed the question of studying the average length of the longest increasing subsequence of a random permutation, which is now called ``Ulam's problem". 
In \cite{Seedlings}, Hammersley transferred the problem into a last-passage model now called the polynuclear growth (PNG) model and used the subadditive ergodic theorem to show that the limit $\gamma$ exists. 
Logan and Shepp \cite{Logan-Shepp}, and Vershik and Kerov \cite{Vershik-Kerov} simultaneously proved that $\gamma = 2$. In fact, they proved a more general result concerning the limit shape of a Young diagram associated with the random permutation. 
In \cite{aldous-diaconis}, an alternative proof is given using the perspective of interacting particle systems. \cite{baik1999distribution} proved the entire fluctuation theorem through an analysis of exact formulas. \cite{seppalainen1996microscopic, groeneboom2001ulam, groeneboom2002hydrodynamical, HammersleySourcesAndSinks, basdevant2015discrete} gave different proofs of $\gamma = 2$ using soft arguments. 

Beyond the PNG model, the limit shapes of other last passage models have been well studied. For two discrete variants of the PNG model, the almost sure convergence to an explicit limit shape has been proved in \cite{seppalainen1997increasing, seppalainen1998exact, basdevant2015discrete}. The explicit limit shape for the exponential last passage percolation (LPP) model was first proved in \cite{rost1981non} (also see the notes \cite{seppalainen2009lecture, seppalainen2017variational, ferrari2018tasep}), and similar results can be obtained for the geometric case. The fluctuations of the exponential and geometric LPP models around their limit shapes are given in \cite{johansson2000shape} using integrable methods. We remark that a common approach to deriving the explicit limit shape of an exactly solvable LPP  or polymer model is to couple it with a stationary model and then solve a corresponding variational problem. This idea, however, no longer works for the $t$-PNG model since we do not have a similar coupling. For the LPP model with general i.i.d. weights, the limit shape is no longer explicit. \cite{martin2004limiting} proved a general shape theorem and studied the continuity of the shape function as well as its asymptotic behavior near the edge. A shape theorem for the last passage percolation model on a
two-dimensional compound Poisson process was proved in \cite{cator2010shape}.

For the PNG model, the two-colored version has been studied in \cite{HammersleySourcesAndSinks}, where the paths of the second color can be viewed as the trajectories of the second class particles. The behavior of second class particles in the PNG model was studied in \cite{cator2006behavior}. By a duality between the second class particle and the exit point, \cite{cator2006second} shows that the fluctuation exponent of the stationary PNG model along its characteristic direction is $\frac{1}{3}$. A similar result was obtained in \cite{ciech2019order} for a discrete variant of the PNG model. The colored PNG model has been considered in \cite{ferrari2009multiclass}, where it was obtained via the basic coupling of multiple particles. Note that it is not clear how to go from this perspective to our definition of the stochastic matrices in Definition \ref{def:Lmatrix}.

The authors of \cite{aggarwal2021deformed} introduced the $t$-PNG model, which is a one-parameter deformation of the PNG model.
They proved a one-point fluctuation limit theorem for the $t$-PNG model using integrable methods. They also proved one-point convergence to the KPZ equation. We remark that a different deformation of the $t$-PNG model was considered in \cite{pei2016q}. The $t$-PNG model can be realized as a scaling limit of the S6V and its higher spin generalization after complementing the horizontal lines. The S6V model and its various generalizations have been studied recently in \cite{corwin2016stochastic, kuniba2016stochastic, corwin2017kpz, borodin2018higher, borodin2018coloured, kuan2018algebraic,    borodin2020observables, lin2020kpz,  lin2020stochastic, imamura2020stationary, aggarwal2021colored, aggarwal2021deformed} and references therein. 
\subsection*{Outline}
In Section \ref{sec:multicolor} we prove that the matrices in Definition \ref{def:Lmatrix} are stochastic and satisfy Properties \hyperlink{hyp:property1}{1}-\hyperlink{hyp:property2}{2}. In Section \ref{sec:subadditivity} we apply the superadditive ergodic theorem to prove the hydrodynamic limit without identifying the constant $\gamma$. In Section \ref{sec:upper-bound} we prove an upper bound for $\gamma$ by constructing a stationary version of the $t$-PNG model, we show that Property \hyperlink{hyp:property3}{3} is satisfied, and we also prove a version of Burke's Theorem. In Section \ref{sec:proving} we identify $\gamma$ by proving a law of large numbers for the number of $\alpha$-points. In Appendix \ref{sec:twocolorfig}, we give all the two-colored configurations with non-zero weights. In Appendix \ref{sec:generators}, we provide some technical computations for Section \ref{sec:upper-bound}. Appendix \ref{sec:simulations} provides computer simulations of the $t$-PNG model for different values of $t$.


\section{Properties of the $\LL$ matrices} \label{sec:multicolor}
In this section, we prove that the matrices $\{\LL^n: n \geq 1\}$ defined in Definition \ref{def:nLmatrix} are stochastic and satisfy the properties of color ignorance and mod 2 erasure. 
The stochasticity is shown in Proposition \ref{prop:stochasticity}. The properties of color ignorance and mod 2 erasure are respectively shown in Propositions \ref{prop:colorignore} and \ref{prop:mod2erasure}. 
\begin{lemma}\label{lem:uniqueoutput1}
Fix arbitrary $i, j \in \{0, 1\}$. There exists a unique pair $(k, l) \in \{0, 1\}^2$ such that $\LL^1 (i, j; k, l) \in \{t, 1\}$.  Similarly, there exists a unique pair $(k', l') \in \{0, 1\}^2$ such that $\LL^1 (i, j; k', l') \in \{1-t, 1\}$. 
\end{lemma}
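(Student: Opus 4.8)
The statement is a small finiteness fact about the single-colored matrix $\LL^1$, so the plan is simply to enumerate the four possible values of $(i,j)$ and read off the relevant entries from Definition \ref{def:Lmatrix}. First I would recall that the only nonzero entries of $\LL^1$ are
\[
\LL^1(1,1;1,1)=t,\quad \LL^1(1,1;0,0)=1-t,\quad \LL^1(1,0;1,0)=1,\quad \LL^1(0,1;0,1)=1,\quad \LL^1(0,0;0,0)=1,
\]
so for each input pair $(i,j)$ there are at most two output pairs with nonzero weight, and in three of the four cases exactly one.

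For the first claim, I would argue case by case on $(i,j)$. If $(i,j)=(0,0)$, the only nonzero entry is $\LL^1(0,0;0,0)=1$, so $(k,l)=(0,0)$ is the unique pair with $\LL^1(i,j;k,l)\in\{t,1\}$ (indeed in $\{1\}$). Similarly $(i,j)=(1,0)$ forces $(k,l)=(1,0)$ with weight $1$, and $(i,j)=(0,1)$ forces $(k,l)=(0,1)$ with weight $1$. Finally, for $(i,j)=(1,1)$ the two nonzero entries are $\LL^1(1,1;1,1)=t$ and $\LL^1(1,1;0,0)=1-t$; since $t\in[0,1)$ we treat $t$ and $1-t$ as formal indeterminates (as the paper does), so $1-t\notin\{t,1\}$ and the unique pair with weight in $\{t,1\}$ is $(k,l)=(1,1)$. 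This exhausts all cases and proves existence and uniqueness of $(k,l)$.

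The second claim is handled identically, swapping the roles of $t$ and $1-t$. For $(i,j)\in\{(0,0),(1,0),(0,1)\}$ the unique nonzero entry has weight $1\in\{1-t,1\}$, giving $(k',l')$ equal to $(0,0)$, $(1,0)$, $(0,1)$ respectively. For $(i,j)=(1,1)$, among the two nonzero entries $t$ and $1-t$, only $1-t$ lies in $\{1-t,1\}$ (again because $t$ and $1-t$ are treated as distinct indeterminates), so $(k',l')=(0,0)$ is the unique choice. The statement follows.

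There is no real obstacle here; the only subtlety worth flagging is the convention that $t$ and $1-t$ are treated as indeterminates, which is exactly what rules out any ambiguity in the case $(i,j)=(1,1)$ and which the paper has already adopted in defining $\mmin$. I would simply invoke that convention explicitly at the relevant step.
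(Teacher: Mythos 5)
Your case-by-case enumeration of the four input pairs $(i,j)$ is correct and is exactly the verification that the paper leaves implicit when it says the lemma is ``straightforward from Definition \ref{def:Lmatrix}.'' Nothing to add.
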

\begin{proof}
This is straightforward from Definition \ref{def:Lmatrix}.
\end{proof}
Let $A = \{r_1, \dots, r_m\}$ be an ordered subset of $\{1, \dots, n\}$. For $\bx = (x_1, \dots, x_n) \in \{0, 1\}^n$, we define $\bx_A = (x_{r_1}, \dots, x_{r_m})$.  In particular, we define $\bx_{[1, m]} = (x_1, \dots, x_m)$.
\begin{proposition}[Stochasticity]\label{prop:stochasticity}
For arbitrary fixed $n \in \Z_{\geq 1}$, 
$\LL^n$ is a stochastic matrix, i.e. the entries of $\LL^n$ are non-negative, and for any $\bi, \bj \in \{0, 1\}^n$, 
\begin{equation*}
\sum_{\bk, \bl \in \{0, 1\}^n} \LL^n(\bi, \bj; \bk, \bl) = 1.
\end{equation*}
\end{proposition}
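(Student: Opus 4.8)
The plan is to induct on $n$, the base case $n=1$ being precisely the last assertion of Definition~\ref{def:Lmatrix}. Non-negativity needs no induction: since $t \in [0,1)$, every entry of $\LL^1$ lies in $\{0, t, 1-t, 1\} \subset [0,1]$, and by its definition $\mmin$ applied to such numbers returns either $0$ or their ordinary minimum, so every entry of $\LL^n$ is non-negative.

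The first step toward the inductive step is a reduction of $\LL^n$ to $\LL^{n-1}$. For $r \le n-1$ the projection $\mfs_r$ depends only on the first $n-1$ coordinates, so $\mmin_{r \in \{1,\dots,n-1\}} \LL^1(\mfs_r(\bi),\mfs_r(\bj);\mfs_r(\bk),\mfs_r(\bl)) = \LL^{n-1}(\bi_{[1,n-1]},\bj_{[1,n-1]};\bk_{[1,n-1]},\bl_{[1,n-1]})$. Combining this with the fact that $\mmin$ is \emph{associative} as an operation on finite multisets with entries in $\{0,t,1-t,1\}$ — a short case check: either some argument forces the value $0$, or one $1-t$ meets one $t$ and the value is $0$, or the value is the ordinary minimum and folding a sub-multiset into its $\mmin$ changes neither the running minimum nor the presence of a $t$--$(1-t)$ pair in any way that affects the output — one obtains
\begin{equation*}
\LL^n(\bi, \bj; \bk, \bl) = \mmin\Big(\LL^{n-1}\big(\bi_{[1,n-1]},\bj_{[1,n-1]};\bk_{[1,n-1]},\bl_{[1,n-1]}\big),\ \LL^1\big(\mfs_n(\bi),\mfs_n(\bj);\mfs_n(\bk),\mfs_n(\bl)\big)\Big).
\end{equation*}

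Next I would carry out the summation. Fix $\bi,\bj \in \{0,1\}^n$, write $\bk' = \bk_{[1,n-1]}$ and $\bl' = \bl_{[1,n-1]}$, and sum first over $k_n, l_n \in \{0,1\}$ with $\bk',\bl'$ held fixed. Since $\mfs_n(\bk) = (\mfs_{n-1}(\bk') + k_n) \bmod 2$ and likewise for $\bl$, the pair $(\mfs_n(\bk),\mfs_n(\bl))$ runs bijectively over $\{0,1\}^2$ as $(k_n,l_n)$ does, while $(p,q) := (\mfs_n(\bi),\mfs_n(\bj))$ stays fixed. Hence the inner sum equals $\sum_{(c,d)\in\{0,1\}^2} \mmin\big(X,\LL^1(p,q;c,d)\big)$ with $X := \LL^{n-1}(\bi_{[1,n-1]},\bj_{[1,n-1]};\bk',\bl')$, and the crux of the proof is the identity
\begin{equation*}
\sum_{(c,d)\in\{0,1\}^2} \mmin\big(X,\LL^1(p,q;c,d)\big) = X \qquad \text{for all } X \in \{0,t,1-t,1\},\ (p,q)\in\{0,1\}^2.
\end{equation*}
For $(p,q)\ne(1,1)$ the row $\LL^1(p,q;\cdot,\cdot)$ is a single $1$ and three $0$'s, and since $\mmin(X,1)=X$ and $\mmin(X,0)=0$ the identity is clear. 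Plugging it back in gives $\sum_{k_n,l_n} \LL^n(\bi,\bj;\bk,\bl) = \LL^{n-1}(\bi_{[1,n-1]},\bj_{[1,n-1]};\bk',\bl')$, and summing over $\bk',\bl'\in\{0,1\}^{n-1}$ and applying the inductive hypothesis completes the proof.

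The main obstacle is the remaining case $(p,q)=(1,1)$ of that identity — the vertex configuration where two meeting lines either cross (weight $t$) or annihilate (weight $1-t$). There the identity reduces to $\mmin(X,t)+\mmin(X,1-t)=X$ for each of the four values of $X$, which is exactly what dictates the non-standard definition of $\mmin$: for instance $X=t$ gives $t+0=t$ and $X=1$ gives $t+(1-t)=1$, whereas with the ordinary minimum the case $X=t$ already fails, yielding $t+\min(t,1-t)\ne t$. Everything else — verifying associativity of $\mmin$ and bookkeeping the projections $\mfs_r$ through the sum — is routine.
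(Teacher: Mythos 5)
Your proof is correct, and it takes a genuinely cleaner route through the inductive step than the paper does. Both arguments induct on $n$ via the same recursion
\begin{equation*}
\LL^n(\bi,\bj;\bk,\bl) = \mmin\Big(\LL^{n-1}\big(\bi_{[1,n-1]},\bj_{[1,n-1]};\bk_{[1,n-1]},\bl_{[1,n-1]}\big),\ \LL^1\big(\mfs_n(\bi),\mfs_n(\bj);\mfs_n(\bk),\mfs_n(\bl)\big)\Big),
\end{equation*}
which is the paper's equation~\eqref{eq:Lrecur}. The paper then case-splits on the structure of the row $\LL^{N-1}(\bi_{[1,N-1]},\bj_{[1,N-1]};\cdot,\cdot)$ — either it is a point mass (Case~1) or it has mass $t$ and $1-t$ on exactly two outputs (Case~2) — and in Case~2 invokes Lemma~\ref{lem:uniqueoutput1} to locate the unique output pair of $\LL^N$ carrying weight $t$ and the unique one carrying weight $1-t$. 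You instead hold $\bk_{[1,n-1]},\bl_{[1,n-1]}$ fixed, sum the last coordinate $(k_n,l_n)$ first using the bijection $(k_n,l_n)\leftrightarrow(\mfs_n(\bk),\mfs_n(\bl))$, and reduce everything to the single identity $\sum_{(c,d)\in\{0,1\}^2}\mmin\big(X,\LL^1(p,q;c,d)\big)=X$ for $X\in\{0,t,1-t,1\}$, $(p,q)\in\{0,1\}^2$. This collapses the paper's two cases into a short table check and makes explicit exactly why the non-standard $\mmin$ is forced: the $X=t$, $(p,q)=(1,1)$ entry of your identity is precisely the place where the ordinary $\min$ would break the row-sum. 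The one step you leave at the level of a sketch is the associativity of $\mmin$ on multisets from $\{0,t,1-t,1\}$, which you need to obtain the recursion from Definition~\ref{def:nLmatrix}; this is routine and correct (the $\mmin$ of a finite multiset is either $0$ or an element of the multiset, so folding a sub-multiset into its $\mmin$ neither creates nor destroys a $t$--$(1-t)$ collision and preserves the running minimum), and the paper takes the recursion \eqref{eq:Lrecur} for granted in exactly the same way.
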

\begin{proof}
Let us prove the result by induction. When $n = 1$, 
we can easily see that $\LL^1$ is stochastic. We use induction to prove the stochasticity for general $n$. 
We assume  $\LL^{N-1}$ is stochastic 
and use this to show that $\LL^N$ is also stochastic. Referring to \eqref{eq:weight}, for $\bi, \bj, \bk, \bl \in \{0, 1\}^N$, we have
\begin{equation}\label{eq:Lrecur}
\LL^N(\bi, \bj; \bk, \bl) =
\mmin\Big(\LL^{N-1}\big(\bi_{[1, N-1]}, \bj_{[1, N-1]}; \bk_{[1, N-1]}, \bl_{[1, N-1]}\big), \LL^1(\mfs_N (\bi), \mfs_N (\bj); \mfs_N (\bk), \mfs_N (\bl))\Big). 
\end{equation}
Fix arbitrary $\bi, \bj \in \{0, 1\}^N$. $\LL^{N-1}$ is a stochastic matrix whose entries take value in $\{0, t, 1-t, 1\}$, so we have one of the following two cases:
\bigskip
\\
\textbf{Case 1:} There exist unique $\mfk, \mfl \in \{0, 1\}^{N-1}$ such that $\LL^{N-1}(\bi_{[1, N-1]}, \bj_{[1, N-1]}; \mfk, \mfl) = 1$.\\ 
\textbf{Case 2:} There exist unique $\mfk_1, \mfl_1, \mfk_2, \mfl_2 \in \{0, 1\}^{N-1}$ such that 
\begin{equation*}
\LL^{N-1}(\bi_{[1, N-1]}, \bj_{[1, N-1]}; \mfk_1, \mfl_1) = t, \qquad \LL^{N-1}(\bi_{[1, N-1]}, \bj_{[1, N-1]}; \mfk_2, \mfl_2) = 1 - t.
\end{equation*}

Let us prove the stochasticity of $\LL^N$ in each case. 
\bigskip
\\
\textbf{Proof for Case 1:}
Using $\LL^{N-1}(\bi_{[1, N-1]}, \bj_{[1, N-1]}; \mfk, \mfl) = 1$ and \eqref{eq:Lrecur}, we have $$\LL^N (\bi, \bj; \bk, \bl) = \LL^1(\mfs_N(\bi), \mfs_N(\bj); \mfs_N (\bk), \mfs_N (\bl))$$ when $\bk_{[1, N-1]} = \mfk$ and $\bl_{[1, N-1]} = \mfl$. In addition, $\LL^{N}(\bi, \bj; \bk, \bl) = 0$ when $\bk_{[1, N-1]} \neq \mfk$ or $\bl_{[1, N-1]} \neq \mfl$. Note that $(\mfs_N(\bk), \mfs_N (\bl))$ equals each element of $\{0, 1\}^2$ exactly once when we vary $\bk, \bl$ under the restriction $\bk_{[1, N-1]} = \mfk$ and $\bl_{[1, N-1]} = \mfl$. Therefore,
\begin{equation*}
\sum_{\bk, \bl \in \{0, 1\}^N} \LL^N(\bi, \bj; \bk, \bl) = \sum_{\substack{\bk_{[1, N-1]} = \mfk\\ \bl_{[1, N-1]} = \mfl}} \LL^1 (\mfs_N(\bi), \mfs_N(\bj); \mfs_N (\bk), \mfs_N (\bl)) = 1.
\end{equation*}
The last equality comes from the stochasticity of $\LL^1$.
\bigskip
\\
\textbf{Proof for Case 2:}  
By \eqref{eq:Lrecur}, $\LL^N(\bi, \bj; \bk, \bl) = 0$ when $(\bk_{[1, N-1]}, \bl_{[1, N-1]}) \notin \{(\mfk_1, \mfl_1), (\mfk_2, \mfl_2)\}.$ If $(\bk_{[1, N-1]}, \bl_{[1, N-1]}) = (\mfk_1, \mfl_1)$, we have 
$$\LL^N(\bi, \bj; \bk, \bl) = \mmin \big(t, \LL^1 (\mfs_N (\bi), \mfs_N (\bj); \mfs_N (\bk), \mfs_N (\bl))\big).$$ 
By Lemma \ref{lem:uniqueoutput1}, there exists only one pair of $(\mfs_N(\bk), \mfs_N(\bl))$ such that $\LL^1(\mfs_N (\bi), \mfs_N (\bj); \mfs_N (\bk), \mfs_N (\bl)) \in \{t, 1\}$.
Using this and the equation above, 
there exists a unique pair $(\bk^   1, \bl^1) \in \{0, 1\}^N$ such  that $\bk^1_{[1, N-1]} = \mfk_1$ and $\bl^1_{[1, N-1]} = \mfl_1$ and $\LL^{N}(\bi, \bj; \bk, \bl) = t$.
Similarly, there exists only one pair of $(\bk^2, \bl^2) \in \{0, 1\}^N$ such that $\bk^2_{[1, N-1]} = \mfk_2$, $\bl^2_{[1, N-1]} = \mfl_2$ and $\LL^N(\bi, \bj; \bk^2, \bl^2) = 1 - t$.  Hence, we have
\begin{equation*}
\sum_{\bk, \bl \in \{0, 1\}^N} \LL^N(\bi, \bj; \bk, \bl) = \LL^N(\bi, \bj; \bk^1, \bl^1) + \LL^N(\bi, \bj; \bk^2, \bl^2) = 1. 
\end{equation*} 
By \eqref{eq:weight}, $\LL^N$ is non-negative, hence it is a stochastic matrix.
\end{proof}

The following  lemmas will be used to prove the properties of color ignorance and mod 2 erasure. 
\begin{lemma}\label{lem:uniqueoutput2}
	Fix positive integers $m \leq n$. 
	Assume that  for fixed $\mfi, \mfj, \mfk, \mfl \in \{0, 1\}^m$ we have $\LL^m(\mfi, \mfj; \mfk, \mfl) \in \{t, 1-t\}$. 
Then for any fixed $\bi, \bj \in \{0, 1\}^n$ satisfying $(\bi_{[1, m]}, \bj_{[1, m]}) = (\mfi, \mfj)$, there exist unique $\bk^1, \bl^1$, $\bk^2, \bl^2 \in \{0, 1\}^n$ satisfying 
	\begin{equation*}
	\LL^n(\bi, \bj; \bk^1, \bl^1) = \q, \qquad \LL^n(\bi, \bj; \bk^2, \bl^2) = 1 - \q.
	\end{equation*}
	For $(\bk, \bl)$ that does not equal either $(\bk^1, \bl^1)$ or $(\bk^2, \bl^2)$, we have $\LL^n(\bi, \bj; \bk, \bl) = 0$. 
\end{lemma}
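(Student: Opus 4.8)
The plan is to prove the lemma by induction on $n$, the base case being $n = m$. For the base case, note that by Proposition~\ref{prop:stochasticity} the row $\LL^m(\mfi, \mfj; \cdot, \cdot)$ is a probability vector all of whose entries lie in $\{0, t, 1-t, 1\}$; treating $t$ and $1-t$ as indeterminates (the paper's standing convention), such a vector must have exactly one of two shapes, namely a single entry equal to $1$ with all others $0$, or exactly one entry equal to $t$ and exactly one equal to $1-t$ with all others $0$. This is precisely the elementary dichotomy already invoked in the proof of Proposition~\ref{prop:stochasticity}. Since we are given $\LL^m(\mfi, \mfj; \mfk, \mfl) \in \{t, 1-t\}$, we are in the second shape, which is exactly the assertion of the lemma for $n = m$.

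For the inductive step, suppose $n > m$ and the statement holds at level $n-1$. Fix $\bi, \bj \in \{0,1\}^n$ with $(\bi_{[1,m]}, \bj_{[1,m]}) = (\mfi, \mfj)$. Restricting to the first $n-1$ coordinates, the pair $\bi_{[1,n-1]}, \bj_{[1,n-1]} \in \{0,1\}^{n-1}$ still restricts to $(\mfi, \mfj)$ on $[1,m]$ (as $m \leq n-1$), so the inductive hypothesis supplies unique $(\widetilde{\bk}^1, \widetilde{\bl}^1)$ and $(\widetilde{\bk}^2, \widetilde{\bl}^2)$ in $\{0,1\}^{n-1}$ carrying the weights $t$ and $1-t$ respectively for $\LL^{n-1}$, every other output carrying weight $0$. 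I would then use the two-term recursion \eqref{eq:Lrecur} (which follows from \eqref{eq:weight}, the associativity of $\mmin$, and $\mfs_r(\bx) = \mfs_r(\bx_{[1,n-1]})$ for $r \leq n-1$):
\[
\LL^n(\bi, \bj; \bk, \bl) = \mmin\Big(\LL^{n-1}\big(\bi_{[1,n-1]}, \bj_{[1,n-1]}; \bk_{[1,n-1]}, \bl_{[1,n-1]}\big),\ \LL^1\big(\mfs_n(\bi), \mfs_n(\bj); \mfs_n(\bk), \mfs_n(\bl)\big)\Big).
\]
Any $(\bk,\bl)$ whose restriction to $[1,n-1]$ is neither $(\widetilde{\bk}^1,\widetilde{\bl}^1)$ nor $(\widetilde{\bk}^2,\widetilde{\bl}^2)$ has first argument $0$, hence weight $0$. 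When $(\bk_{[1,n-1]},\bl_{[1,n-1]}) = (\widetilde{\bk}^1,\widetilde{\bl}^1)$, the first argument is $t$; Lemma~\ref{lem:uniqueoutput1} gives the unique pair $(p,q) \in \{0,1\}^2$ with $\LL^1(\mfs_n(\bi),\mfs_n(\bj); p, q) \in \{t,1\}$, and since prescribing $\mfs_n(\bk)$ given $\bk_{[1,n-1]}$ determines $k_n$ modulo $2$ (hence uniquely), there is exactly one $(\bk^1,\bl^1) \in \{0,1\}^n$ extending $(\widetilde{\bk}^1,\widetilde{\bl}^1)$ with $(\mfs_n(\bk^1),\mfs_n(\bl^1)) = (p,q)$; for it $\mmin$ returns $\mmin(t,t) = t$ or $\mmin(t,1) = t$, while every other extension gives second argument in $\{0,1-t\}$ and $\mmin(t,0) = \mmin(t,1-t) = 0$. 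The branch $(\widetilde{\bk}^2,\widetilde{\bl}^2)$ is handled symmetrically, producing the unique $(\bk^2,\bl^2)$ with weight $1-t$. Finally, stochasticity (Proposition~\ref{prop:stochasticity}) forces all remaining entries to vanish and pins $(\bk^1,\bl^1)$, $(\bk^2,\bl^2)$ down as the unique outputs with the stated weights, completing the induction.

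The only place requiring genuine care is the bookkeeping in the inductive step: one must check that composing the level-$(n-1)$ weight $t$ with the $\LL^1$-weights acts as a true ``selector'' — producing weight exactly $t$ on a single output and $0$ on all competitors, and in particular that $\mmin(t,1-t) = 0$ correctly kills the wrong $\LL^1$-branch — and symmetrically for $1-t$; and that the parity constraint imposed by $\mfs_n$ indeed admits a unique lift of a given $(n-1)$-coordinate output to an $n$-coordinate output. Both verifications are short once phrased correctly, with Lemma~\ref{lem:uniqueoutput1} providing exactly the determinism needed on the $\LL^1$ side.
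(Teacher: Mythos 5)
Your proof is correct and rests on the same two ingredients as the paper's — Lemma \ref{lem:uniqueoutput1} applied coordinate by coordinate together with the bijection $\bx \leftrightarrow (\bx_{[1,m]}, (\mfs_r(\bx))_{r>m})$ — the only difference being that you organize the coordinate-by-coordinate analysis as an induction on $n$ via the two-term recursion \eqref{eq:Lrecur}, whereas the paper peels off all the coordinates $r \in \{m+1,\dots,n\}$ in one step through \eqref{eq:minn}. This is a cosmetic reorganization rather than a genuinely different route.
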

\begin{proof}
When $n = m$, the claim is true due to the stochasticity of   $\mathsf{L}^m$. Now we prove the lemma for $m < n$.
Using the stochasticity of $\LL^m$, we know that there  exist $\mfk^1, \mfl^1; \mfk^2, \mfl^2 \in \{0, 1\}^m$ satisfying $\LL^m(\mfi, \mfj; \mfk^1, \mfl^1) = t$ and $\LL^m(\mfi, \mfj; \mfk^2, \mfl^2) = 1 - t$. 
Using \eqref{eq:weight} and  $\bi_{[1, m]} = \mfi,  \bj_{[1, m]} = \mfj$, we have
\begin{equation}\label{eq:minn}
\LL^{n}(\bi, \bj; \bk, \bl) = \mmin\bigg(\LL^m(\mfi, \mfj; \bk_{[1, m]}, \bl_{[1, m]}), \mmin_{r = m+1}^n\Big(\LL^1 1(\mathfrak{s}_{r} (\bi), \mathfrak{s}_{r} (\bj); \mathfrak{s}_{r} (\bk), \mathfrak{s}_{r} (\bl))\Big)\bigg).
\end{equation}
If $(\bk_{[1, m]}, \bl_{[1, m]}) = (\mfk^1, \mfl^1)$, we have 
	\begin{equation*}
	\LL^n(\bi, \bj; \bk, \bl) = \mmin\bigg(t,  \mmin_{r = m+1}^n\Big(\LL^1(\mathfrak{s}_{r} (\bi), \mathfrak{s}_{r} (\bj); \mathfrak{s}_{r} (\bk), \mathfrak{s}_{r} (\bl))\Big)\bigg).
	\end{equation*}
	Therefore, $\LL^n(\bi, \bj; \bk, \bl) = t$ if and only if $\LL^1(\mathfrak{s}_{r} (\bi), \mathfrak{s}_{r} (\bj); \mathfrak{s}_{r} (\bk), \mathfrak{s}_{r} (\bl)) \in \{t, 1\}$ for every $r \in \{m+1, \dots, n\}$.
	By Lemma \ref{lem:uniqueoutput1}, for every $\mathfrak{s}_{r} (\bi), \mathfrak{s}_{r} (\bj) \in \{0, 1\}$, there exist unique $\mathfrak{s}_{r} (\bk), \mathfrak{s}_{r} (\bl)\in \{0, 1\}$ such that $\LL^1(\mathfrak{s}_{r} (\bi), \mathfrak{s}_{r} (\bj); \mathfrak{s}_{r} (\bk), \mathfrak{s}_{r} (\bl)) \in \{t, 1\}$. Since 
	$(\bk, \bl)$ is uniquely determined by the value of $(\bk_{[1, m]}, \bl_{[1, m]})$ and $(\mfs_r(\bk), \mfs_{r} (\mathbf{\bl}))_{r = m+1}^n$ (and vice versa), there is a unique pair $(\bk^1, \bl^1)$ such that $(\bk^1_{[1, m]}, \bl^1_{[1, m]}) = (\mfk^1, \mfl^1)$ and 
	$\LL^n(\bi, \bj; \bk^1, \bl^1) = t$. For  $(\bk, \bl)$ satisfying $(\bk_{[1, m]}, \bl_{[1, m]}) = (\mfk^1, \mfl^1)$ and $(\bk, \bl) \neq (\bk^1, \bl^1)$, we have $\LL^n(\bi, \bj; \bk, \bl) = 0$.
	
For a similar reason,	there exists a unique pair $(\bk^2, \bl^2)$ such that $(\bk^2_{[1, m]}, \bl^2_{[1, m]}) = (\mfk^2, \mfl^2)$ and $\LL^n(\bi, \bj; \bk^2, \bl^2) = 1-t$.  By the stochasticity of $\LL^n$, for $(\bk, \bl) \notin \{(\bk^1, \bl^1), (\bk^2, \bl^2)\}$, we have $\LL^n(\bi, \bj; \bk, \bl) = 0$. 
This concludes the lemma.
\end{proof}

Let us prepare some notation for the next lemma. We call  $\pi$ a partition of $\{1, \dots, n\}$ if  
$\pi$ takes the form of  
$$\pi = \big\{\{1, \dots, r_1\}, \{r_1 + 1, \dots, r_2\}, \dots, \{r_{m-1} + 1, \dots, r_m\}\big\}$$
for some $m \leq n$ and  $1 = r_1 < r_2 < \dots < r_m  = n$. We define a map $g_\pi$ from $\{0, 1\}^n \to \{0, 1\}^m$ such that 
\begin{equation*}
g_{\pi} (x_1, \dots, x_n) = \bigg(\Big(\sum_{i = r_{k-1}+1}^{r_k} x_i\Big) \text{ mod } 2\bigg)_{k = 1}^m.
\end{equation*}
We define $\ell(\pi) = m$ to be the length of the partition $\pi$.
\begin{lemma}\label{lem:uniqueoutput3} 
Fix positive integers $m \leq n$. Fix a partition  $\pi$ of $\{1, \dots, n\}$ such that $\ell(\pi) = m$. 
Assume that  for fixed $\mfi, \mfj, \mfk, \mfl \in \{0, 1\}^m$, we have $\LL^m(\mfi, \mfj; \mfk, \mfl) \in \{t, 1-t\}$. 
Then for any fixed $\bi, \bj \in \{0, 1\}^n$ satisfying $(g_\pi(\bi), g_\pi(\bj)) = (\mfi, \mfj)$, there exist $\bk^1, \bl^1$, $\bk^2, \bl^2 \in \{0, 1\}^n$ satisfying 
\begin{equation*}
\LL^n(\bi, \bj; \bk^1, \bl^1) = \q, \qquad \LL^n(\bi, \bj; \bk^2, \bl^2) = 1 - \q.
\end{equation*}
For $(\bk, \bl)$ that does not equal either $(\bk^1, \bl^1)$ or $(\bk^2, \bl^2)$, we have $\LL^n(\bi, \bj; \bk, \bl) = 0$. 
\end{lemma}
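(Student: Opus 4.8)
The plan is to follow the structure of the proof of Lemma \ref{lem:uniqueoutput2}; the case $m=n$ is immediate from Proposition \ref{prop:stochasticity}, so assume $m<n$. The new ingredient is that the block projection $g_\pi$ commutes with the $r$-fold projections at the block endpoints. With $r_0:=0$ and $1=r_1<\dots<r_m=n$ the cut-points of $\pi$, so that $(g_\pi(\bx))_k=\big(\sum_{i=r_{k-1}+1}^{r_k}x_i\big)\bmod 2$, one has for every $\bx\in\{0,1\}^n$ and $k\in\{1,\dots,m\}$ the identity $\mfs_{r_k}(\bx)=\mfs_k(g_\pi(\bx))$, since both sides equal $\big(\sum_{i=1}^{r_k}x_i\big)\bmod 2$. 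Using this, the inclusion $\{r_1,\dots,r_m\}\subseteq\{1,\dots,n\}$, and the fact that $\mmin$ of a finite family of numbers in $\{0,t,1-t,1\}$ can be computed blockwise (both ``$\min$'' and the exceptional ``$t$ and $1-t$ both present'' rule are insensitive to regrouping, as $0,t,1-t,1$ are treated as distinct), Definition \ref{def:nLmatrix} gives, for all $\bi,\bj,\bk,\bl\in\{0,1\}^n$,
\begin{equation*}
\LL^n(\bi,\bj;\bk,\bl) = \mmin\Big(\LL^m\big(g_\pi(\bi),g_\pi(\bj);g_\pi(\bk),g_\pi(\bl)\big),\; \mmin_{r\in\{1,\dots,n\}\setminus\{r_1,\dots,r_m\}}\LL^1\big(\mfs_r(\bi),\mfs_r(\bj);\mfs_r(\bk),\mfs_r(\bl)\big)\Big),
\end{equation*}
which is the analogue of \eqref{eq:minn} with the prefix restriction $\bx\mapsto\bx_{[1,m]}$ replaced by $g_\pi$.

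Set $\mfi=g_\pi(\bi)$ and $\mfj=g_\pi(\bj)$. By hypothesis $\LL^m(\mfi,\mfj;\mfk,\mfl)\in\{t,1-t\}$, so by the stochasticity of $\LL^m$ (Proposition \ref{prop:stochasticity}) there are unique pairs $(\mfk^1,\mfl^1)$ and $(\mfk^2,\mfl^2)$ with $\LL^m(\mfi,\mfj;\mfk^1,\mfl^1)=t$ and $\LL^m(\mfi,\mfj;\mfk^2,\mfl^2)=1-t$, and $\LL^m(\mfi,\mfj;\cdot,\cdot)=0$ at all other outputs. If $(g_\pi(\bk),g_\pi(\bl))\notin\{(\mfk^1,\mfl^1),(\mfk^2,\mfl^2)\}$ then the first entry of the outer $\mmin$ in the display is $0$, hence $\LL^n(\bi,\bj;\bk,\bl)=0$. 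If $(g_\pi(\bk),g_\pi(\bl))=(\mfk^1,\mfl^1)$ then $\LL^n(\bi,\bj;\bk,\bl)=\mmin(t,V)$ with $V$ the second entry of the outer $\mmin$; this is $t$ when $V\in\{t,1\}$ and $0$ otherwise. By Lemma \ref{lem:uniqueoutput1}, for each $r\in\{1,\dots,n\}\setminus\{r_1,\dots,r_m\}$ there is exactly one pair $(\mfs_r(\bk),\mfs_r(\bl))$ with $\LL^1(\mfs_r(\bi),\mfs_r(\bj);\mfs_r(\bk),\mfs_r(\bl))\in\{t,1\}$, so $V\in\{t,1\}$ pins down all these coordinates; since requiring $g_\pi(\bk)=\mfk^1$ is equivalent to fixing $\big(\mfs_{r_k}(\bk)\big)_{k=1}^m$ (by the identity above), and the map $\bx\mapsto(\mfs_1(\bx),\dots,\mfs_n(\bx))$ is a bijection of $\{0,1\}^n$, there is a unique such $(\bk^1,\bl^1)$, for which $\LL^n(\bi,\bj;\bk^1,\bl^1)=t$, while every other $(\bk,\bl)$ with $g_\pi$-image $(\mfk^1,\mfl^1)$ has $V\in\{0,1-t\}$ and hence $\LL^n(\bi,\bj;\bk,\bl)=0$. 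The case $(g_\pi(\bk),g_\pi(\bl))=(\mfk^2,\mfl^2)$ is identical with $(t,\{t,1\})$ replaced by $(1-t,\{1-t,1\})$, producing the unique $(\bk^2,\bl^2)$ with $\LL^n(\bi,\bj;\bk^2,\bl^2)=1-t$. This is exactly the assertion of the lemma.

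The only point I expect to require real care is the blockwise-computation claim for $\mmin$ in the first paragraph, together with the resulting collapses $\mmin(0,V)=0$, $\mmin(t,V)\in\{0,t\}$, $\mmin(1-t,V)\in\{0,1-t\}$ for $V\in\{0,t,1-t,1\}$: these depend precisely on treating $t$ and $1-t$ as indeterminates, which is the reason $\mmin$ rather than $\min$ appears in Definition \ref{def:nLmatrix}. Everything else is a transcription of the proof of Lemma \ref{lem:uniqueoutput2}, with the identity $\mfs_{r_k}=\mfs_k\circ g_\pi$ playing the role that the compatibility of $\LL^m$ with $\LL^n$ under prefix restriction played there.
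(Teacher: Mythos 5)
Your proof is correct and follows essentially the same route as the paper's: decompose $\mmin_{r=1}^n$ into the blocks $\{r_1,\dots,r_m\}$ and its complement, identify the first block (via $\mfs_{r_k}=\mfs_k\circ g_\pi$) with $\LL^m(g_\pi(\bi),g_\pi(\bj);g_\pi(\bk),g_\pi(\bl))$, then run the same case analysis and uniqueness argument via Lemma~\ref{lem:uniqueoutput1} as in Lemma~\ref{lem:uniqueoutput2}. The only cosmetic difference is that you state the blockwise property of $\mmin$ and the identity $\mfs_{r_k}=\mfs_k\circ g_\pi$ explicitly, whereas the paper uses them implicitly.
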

\begin{proof}
The lemma is clearly true when $m = n$. We only need to prove it for $m < n$.

By stochasticity of $\LL^m$, there exists  $\mfk^1, \mfl^1, \mfk^2, \mfl^2 \in \{0, 1\}^m$ satisfying $\LL^m(\mfi, \mfj, \mfk^1, \mfl^1) = t$ and $\LL^m(\mfi, \mfj, \mfk^2, \mfl^2) = 1 - t$. 
We let 
	\begin{equation*}
	\pi = \big\{\{1, \dots, r_1\}, \{r_1 + 1, \dots, r_2\}, \dots, \{r_{m-1} + 1, \dots, r_m\}\big\}
	\end{equation*}
where $r_m = n$. Let $A_\pi = \{r_1, \dots, r_m\}$ and $B_\pi = \{1, \dots, n\} \backslash A_\pi$.
	Using \eqref{eq:weight}, we have
	\begin{align*}
	\LL^{n}(\bi, \bj; \bk, \bl) &= \mmin_{r=1}^n \big(\LL^1(\mfs_r(\bi), \mfs_r (\bj); \mfs_r (\bk), \mfs_r (\bl))\big)\\ 
	&=  \mmin\Big(\mmin_{r \in A_\pi}\big(\LL^1(\mfs_r(\bi), \mfs_r (\bj); \mfs_r (\bk), \mfs_r (\bl))\big), \mmin_{r \in B_\pi} \big(\LL^1(\mfs_r(\bi), \mfs_r (\bj); \mfs_r (\bk), \mfs_r (\bl))\big)\Big)\\
		&=\mmin\Big(\LL^m(g_\pi (\bi), g_\pi (\bj); g_\pi (\bk), g_\pi (\bl)),  \mmin_{r \in B_\pi}\big(\LL^1(\mfs_r(\bi), \mfs_r (\bj); \mfs_r (\bk), \mfs_r (\bl))\big) \Big).
	\end{align*}
Note that $(g_\pi (\bi), g_\pi (\bj)) = (\mfi, \mfj)$. If $(g_\pi (\bk), g_\pi (\bl)) = (\mfk^1, \mfl^1)$, then
\begin{equation*}
\LL^n(\bi, \bj; \bk, \bl) = \mmin\Big(t, \mmin_{r \in B_\pi}\big(\LL^1(\mfs_r(\bi), \mfs_r (\bj); \mfs_r (\bk), \mfs_r (\bl))\big) \Big).
\end{equation*}

In order for $\LL^n(\bi, \bj; \bk, \bl) = t$, we need $\LL^1(\mfs_r (\bi), \mfs_r (\bj); \mfs_r (\bk), \mfs_r (\bl)) \in \{t, 1\}$ for each $r \in B_\pi$; otherwise, $\LL^n(\bi, \bj; \bk, \bl) = 0$. Note that $\bi, \bj$ are fixed. By Lemma \ref{lem:uniqueoutput1}, for each $r \in B_\pi$,  there is only one choice for $(\mfs_r (\bk), \mfs_r (\bl))$ such that $\LL^1(\mfs_r (\bi), \mfs_r (\bj); \mfs_r (\bk), \mfs_r (\bl)) \in \{t, 1\}$. It is straightforward that we have a bijection from $\{0, 1\}^n$ to itself, given by $\bx \leftrightarrow (g_\pi (\bx), (\mfs_r (\bx))_{r \in B_\pi})$. Hence, there exist  unique $\bk^1, \bl^1$ 
satisfying $(g_\pi (\bk^1), g_\pi (\bl^1)) = (\mfk^1, \mfl^1)$ and 
$\LL^n(\bi, \bj; \bk^1, \bl^1) = t$. 

Similarly, there exist unique $\bk^2, \bl^2$ such that $(g_\pi (\bk^2), g_\pi (\bl^2)) = (\mfk^2, \mfl^2)$ and 
$\LL^n(\bi, \bj; \bk^2, \bl^2)=1-t$. By stochasticity, for $(\bk, \bl) \notin \{(\bk^1, \bl^1), (\bk^2, \bl^2)\}$, we have $\LL^n(\bi, \bj; \bk, \bl) = 0$. This concludes the lemma. 
\end{proof}
\begin{proposition}[Color Ignorance]\label{prop:colorignore}
Fix $m \in \{1, \dots, n\}$ and 
$\mfi, \mfj, \mfk, \mfl \in \{0, 1\}^m$. For all $\bi, \bj \in \{0, 1\}^n$ such that $\bi_{[1, m]} = \mfi$ and $\bj_{[1, m]} = \mfj$, we have 
\begin{equation}\label{eq:property1}
\sum_{\substack{\bk_{[1, m]} = \mfk, \\ \bl_{[1, m]} = \mfl}} \LL^n(\bi, \bj; \bk, \bl) = \LL^m(\mfi, \mfj; \mfk, \mfl).
\end{equation}
\end{proposition}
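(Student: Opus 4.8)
The plan is to first establish \eqref{eq:property1} in the special case $m = n-1$ and then bootstrap to general $m$ by induction on $n$.

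\emph{The one-step case $m = n-1$.} Here $\mfi,\mfj,\mfk,\mfl \in \{0,1\}^{n-1}$ and, for $\bi,\bj \in \{0,1\}^n$ with $\bi_{[1,n-1]} = \mfi$ and $\bj_{[1,n-1]} = \mfj$, we must show $\sum_{k_n,l_n \in \{0,1\}} \LL^n(\bi,\bj;(\mfk,k_n),(\mfl,l_n)) = \LL^{n-1}(\mfi,\mfj;\mfk,\mfl)$. I would start from the recursion \eqref{eq:Lrecur}, which writes each summand as $\mmin\big(\LL^{n-1}(\mfi,\mfj;\mfk,\mfl),\,\LL^1(\mfs_n(\bi),\mfs_n(\bj);\mfs_n(\bk),\mfs_n(\bl))\big)$ with $\bk = (\mfk,k_n)$, so that $\mfs_n(\bk) = (\mfs_{n-1}(\mfk)+k_n)\bmod 2$ and similarly for $\bl$. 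The elementary point is that $(k_n,l_n)\mapsto(\mfs_n(\bk),\mfs_n(\bl))$ is a bijection of $\{0,1\}^2$. I would then split on the value $w := \LL^{n-1}(\mfi,\mfj;\mfk,\mfl)$, which lies in $\{0,t,1-t,1\}$. If $w = 0$, every summand is $\mmin(0,\cdot) = 0$, matching the right side. If $w = 1$, then $\mmin(1,\cdot)$ is the identity on $\{0,t,1-t,1\}$, and reindexing the sum by the bijection turns it into $\sum_{a,b\in\{0,1\}}\LL^1(\mfs_n(\bi),\mfs_n(\bj);a,b) = 1$ by stochasticity of $\LL^1$. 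If $w = t$, Lemma \ref{lem:uniqueoutput1} provides a unique output $(a^\ast,b^\ast)$ with $\LL^1(\mfs_n(\bi),\mfs_n(\bj);a^\ast,b^\ast) \in \{t,1\}$ while every other output has weight in $\{0,1-t\}$; since $\mmin(t,t) = \mmin(t,1) = t$ and $\mmin(t,0) = \mmin(t,1-t) = 0$, exactly the summand with $(\mfs_n(\bk),\mfs_n(\bl)) = (a^\ast,b^\ast)$ survives and the total is $t$. The case $w = 1-t$ is handled identically using the second assertion of Lemma \ref{lem:uniqueoutput1}.

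\emph{The general case, by induction on $n$.} The base case $n = m$ is immediate. For $n > m$, I would reorganize the sum in \eqref{eq:property1} by summing first over the last coordinates $(k_n,l_n)$, using the one-step case (with $\bi_{[1,n-1]},\bj_{[1,n-1]}$ as the $(n-1)$-line inputs and $\bk_{[1,n-1]},\bl_{[1,n-1]}$ as the fixed partial outputs) to collapse the inner sum:
\[
\sum_{\substack{\bk_{[1,m]}=\mfk\\\bl_{[1,m]}=\mfl}}\LL^n(\bi,\bj;\bk,\bl) = \sum_{\substack{\bk'_{[1,m]}=\mfk\\\bl'_{[1,m]}=\mfl}}\LL^{n-1}\big(\bi_{[1,n-1]},\bj_{[1,n-1]};\bk',\bl'\big),
\]
where $\bk',\bl'$ now range over $\{0,1\}^{n-1}$. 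Since $(\bi_{[1,n-1]})_{[1,m]} = \mfi$ and $(\bj_{[1,n-1]})_{[1,m]} = \mfj$, the inductive hypothesis at level $n-1$ identifies the right side with $\LL^m(\mfi,\mfj;\mfk,\mfl)$, completing the induction. All rearrangements are of finite sums.

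The only genuine obstacle is in the one-step case, in the branches $w \in \{t,1-t\}$: one must be sure that every \emph{wrong} single-colored output carries weight in $\{0,1-t\}$ (resp.\ $\{0,t\}$), so that $\mmin(t,\cdot)$ (resp.\ $\mmin(1-t,\cdot)$) annihilates its contribution and no spurious mass survives. This is precisely what Lemma \ref{lem:uniqueoutput1} gives, together with the fact that the entries of $\LL^1$ lie in $\{0,t,1-t,1\}$; once that is noted the computation is purely mechanical, and in particular I do not expect to need stochasticity of $\LL^n$, only that of $\LL^1$.
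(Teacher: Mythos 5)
Your proof is correct, and it takes a genuinely different route from the paper's. The paper argues by a direct four-way case split on the value $\LL^m(\mfi,\mfj;\mfk,\mfl)\in\{0,t,1-t,1\}$, invoking the multi-step identity \eqref{eq:minn}, stochasticity of both $\LL^m$ and $\LL^n$ (Proposition \ref{prop:stochasticity}), and Lemma \ref{lem:uniqueoutput2} to isolate the unique surviving output in the $t$ and $1-t$ branches. You instead prove the proposition directly from the one-step recursion \eqref{eq:Lrecur} in the special case $m=n-1$, observing that after the change of variables $(k_n,l_n)\mapsto(\mfs_n(\bk),\mfs_n(\bl))$ the inner sum becomes $\sum_{a,b}\mmin\bigl(w,\LL^1(\mfs_n(\bi),\mfs_n(\bj);a,b)\bigr)$, which collapses to $w$ by stochasticity of $\LL^1$ and Lemma \ref{lem:uniqueoutput1}; the general case then follows by iterating. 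Your bookkeeping in the one-step branches $w\in\{t,1-t\}$ is right: with inputs fixed, $\LL^1$ either puts all mass on one output (weight $1$) or splits it as $t$ and $1-t$ on two outputs, and in both patterns the $\mmin$ correctly annihilates the contribution of the wrong output. What your route buys is self-containment: you never need Lemma \ref{lem:uniqueoutput2}, nor stochasticity of $\LL^n$. In fact, since stochasticity of $\LL^n$ is precisely the $m=1$ instance of \eqref{eq:property1} summed over $\mfk,\mfl\in\{0,1\}$, your argument would re-derive Proposition \ref{prop:stochasticity} as a one-line corollary. The paper's case analysis, on the other hand, recycles Lemma \ref{lem:uniqueoutput2}, which it reuses again for the mod-2 erasure property (Proposition \ref{prop:mod2erasure}) via Lemma \ref{lem:uniqueoutput3}, so the two treatments share infrastructure; if you pursued your inductive template for mod-2 erasure as well you would want an analogous one-step lemma for merging two adjacent colors.
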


\begin{proof}
	We have $\LL^m(\mfi, \mfj; \mfk, \mfl) \in \{0, t, 1-t, 1\}$. We prove the equality \eqref{eq:property1} for each possible value of $\LL^m(\mfi, \mfj; \mfk, \mfl)$.
\bigskip
\\
\textbf{Case 1: } $\LL^m(\mfi, \mfj; \mfk, \mfl) = 0$. By \eqref{eq:minn}, when $\bi_{[1, m]} = \mfi$, $\bj_{[1, m]} = \mfj$, $\bk_{[1, m]} = \mfk$, $\bl_{[1, m]} = \mfl$, we have $\LL^n(\bi, \bj; \bk, \bl) = 0$. Hence, \eqref{eq:property1} holds. 
\bigskip
\\
\textbf{Case 2: }	$\LL^m(\mfi, \mfj; \mfk, \mfl) = 1$. By stochasticity of $\LL^m$ and \eqref{eq:minn}, if we have $\bi_{[1, m]} = \mfi$, $\bj_{[1, m]} = \mfj$ and $(\bk_{[1, m]}, \bl_{[1, m]}) \neq (\mfk, \mfl)$, we have $\LL^n(\bi, \bj; \bk, \bl) = 0$. Using this and the stochasticity of $\LL^n$, 
	\begin{equation*}
	\sum_{\substack{\bk_{[1, m]} = \mfk\\ \bl_{[1, m]} = \mfl}} \LL^n(\bi, \bj; \bk, \bl) = \sum_{\bk, \bl \in \{0, 1\}^n} \LL^n(\bi, \bj; \bk, \bl) = 1 = \LL^m(\mfi, \mfj; \mfk, \mfl).
	\end{equation*}
	Hence, \eqref{eq:property1} holds. 
\bigskip
\\
\textbf{Case 3: } $\LL^m(\mfi, \mfj; \mfk, \mfl) = \q$. Since $\bi_{[1, m]} = \mfi$ and $\bj_{[1, m]} = \mfj$, by Lemma \ref{lem:uniqueoutput2}, we know that there exist unique $\bk^1, \bl^1 \in \{0, 1\}^n$ satisfying $\bk^1_{[1, m]} = \mfk$, $\bl^1_{[1, m]} = \mfl$ and $\LL^n(\bi, \bj; \bk^1, \bl^1) = \q$. For all $(\bk, \bl) \in \{0, 1\}^n$ satisfying $\bk_{[1, m]} = \mfk$, $\bl_{[1, m]} = \mfk$ and $(\bk, \bl) \neq (\bk^1, \bl^1)$, $\LL^n(\bi, \bj; \bk, \bl) = 0$. Hence,    
	\begin{equation*}
	\sum_{\substack{\bk_{[1, m]} = \mfk\\ \bl_{[1, m]} = \mfl}} \LL^n(\bi, \bj; \bk, \bl) =  \LL^n(\bi, \bj; \bk^1, \bl^1) = t = \LL^m(\mfi, \mfj; \mfk, \mfl).
	\end{equation*}
\textbf{Case 4: $\LL^m(\mfi, \mfj; \mfk, \mfl) = 1-t$}. The proof is similar to Case 3, and we omit it. This concludes the proof of the proposition. 
\end{proof}
\begin{proposition}[Mod 2 Erasure]
	\label{prop:mod2erasure}
Fix a partition $\pi$ of $\{1, \dots, n\}$ such that $\ell(\pi) = m$. Fix $\mfi, \mfj, \mfk, \mfl \in \{0, 1\}^m$. For all $\bi, \bj \in \{0, 1\}^n$ satisfying $g_{\pi} (\bi) = \mfi$ and $g_{\pi} (\bj) = \mfj$, we have \begin{equation}\label{eq:property2}
\sum_{\substack{g_\pi(\bk) = \mfk \\
g_\pi(\bl) = \mfl}} \LL^n(\bi, \bj; \bk, \bl) = \LL^m(\mfi, \mfj; \mfk, \mfl).
 \end{equation}
\end{proposition}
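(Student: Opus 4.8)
The plan is to follow the proof of Proposition~\ref{prop:colorignore} almost verbatim, with the truncation map $\bx \mapsto \bx_{[1,m]}$ replaced by the projection $g_\pi$ and Lemma~\ref{lem:uniqueoutput2} replaced by Lemma~\ref{lem:uniqueoutput3}. The key input is the factorization identity used inside the proof of Lemma~\ref{lem:uniqueoutput3}: with $A_\pi = \{r_1, \dots, r_m\}$ the set of block endpoints of $\pi$ and $B_\pi = \{1, \dots, n\} \setminus A_\pi$, and using that $\mfs_{r_k}(\bx) = \mfs_k(g_\pi(\bx))$, one has
\begin{equation*}
\LL^n(\bi, \bj; \bk, \bl) = \mmin\Big(\LL^m\big(g_\pi(\bi), g_\pi(\bj); g_\pi(\bk), g_\pi(\bl)\big),\, \mmin_{r \in B_\pi}\LL^1\big(\mfs_r(\bi), \mfs_r(\bj); \mfs_r(\bk), \mfs_r(\bl)\big)\Big).
\end{equation*}
Since $g_\pi(\bi) = \mfi$ and $g_\pi(\bj) = \mfj$, for every $(\bk, \bl)$ with $g_\pi(\bk) = \mfk$ and $g_\pi(\bl) = \mfl$ the first slot above equals $v := \LL^m(\mfi, \mfj; \mfk, \mfl)$. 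I would then argue by cases on $v \in \{0, t, 1-t, 1\}$.

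If $v = 0$, every summand on the left of \eqref{eq:property2} is $\mmin(0, \cdot) = 0$, matching the right side. If $v = 1$, then since $\LL^m(\mfi, \mfj; \cdot, \cdot)$ is a probability measure it is concentrated at $(\mfk, \mfl)$, so for $(\bk, \bl)$ with $(g_\pi(\bk), g_\pi(\bl)) \neq (\mfk, \mfl)$ the first slot is $0$ and hence $\LL^n(\bi, \bj; \bk, \bl) = 0$; the constrained sum therefore equals the full row sum of $\LL^n$, which is $1 = v$ by Proposition~\ref{prop:stochasticity}. If $v = t$: by the stochasticity of $\LL^m$ together with its entries lying in $\{0, t, 1-t, 1\}$, the row indexed by $(\mfi, \mfj)$ contains exactly one entry equal to $t$, namely $\LL^m(\mfi, \mfj; \mfk, \mfl)$, and exactly one equal to $1-t$, say at output $(\mfk', \mfl') \neq (\mfk, \mfl)$. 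Lemma~\ref{lem:uniqueoutput3} (and its proof) then produces a unique $(\bk^1, \bl^1)$ with $\LL^n(\bi, \bj; \bk^1, \bl^1) = t$ and $(g_\pi(\bk^1), g_\pi(\bl^1)) = (\mfk, \mfl)$, and a unique $(\bk^2, \bl^2)$ with weight $1-t$ and $(g_\pi(\bk^2), g_\pi(\bl^2)) = (\mfk', \mfl') \neq (\mfk, \mfl)$, all other outputs having weight $0$. Hence within the fiber $\{(\bk, \bl) : g_\pi(\bk) = \mfk,\ g_\pi(\bl) = \mfl\}$ the only nonzero weight is $t$, so the left side of \eqref{eq:property2} is $t = v$. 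The case $v = 1-t$ is symmetric, interchanging the roles of $t$ and $1-t$.

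I do not anticipate a genuine obstacle; the bookkeeping is the same as in Proposition~\ref{prop:colorignore}. The one point needing attention is matching fibers: one must check that the unique $\LL^n$-output of weight $t$ projects under $g_\pi$ to the unique $\LL^m$-output of weight $t$ (and not to the one of weight $1-t$). This is exactly what the construction in the proof of Lemma~\ref{lem:uniqueoutput3} provides, once one has first used the stochasticity of $\LL^m$ and the restriction of its entries to $\{0, t, 1-t, 1\}$ to pin down $(\mfk, \mfl)$ as that unique weight-$t$ output.
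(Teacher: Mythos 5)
Your proposal is correct and follows essentially the same route as the paper: the same factorization of $\LL^n$ through $g_\pi$ and $B_\pi$, the same four-way case split on $\LL^m(\mfi, \mfj; \mfk, \mfl) \in \{0, t, 1-t, 1\}$, and the same use of Lemma~\ref{lem:uniqueoutput3} (together with stochasticity) to handle the $t$ and $1-t$ cases. The fiber-matching subtlety you flag is indeed the one place where one must lean on the construction inside Lemma~\ref{lem:uniqueoutput3}'s proof rather than its bare statement, and the paper does exactly that.
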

\begin{proof}
Let $\pi = \big\{\{1, \dots, r_1\}, \{r_1 + 1, \dots, r_2\}, \dots, \{r_{m-1} + 1, \dots, r_m\}\big\}$, where $1 \leq r_1 < \dots < r_m = n$. We again divide our proof into four cases.
\bigskip
\\
\textbf{Case 1:  $\LL^m(\mfi, \mfj, \mfk, \mfl) = 0$}. By \eqref{eq:weight}, for $\bi, \bj, \bk, \bl$ satisfying $g_\pi (\bi) = \mfi, g_\pi (\bj) = \mfj, g_\pi (\bk) = \mfk, g_\pi (\bl) = \mfl$, we have 
$$\LL^n(\bi, \bj; \bk, \bl) \leq \mmin_{i = 1}^m \big(\LL^1(\mfs_{r_i} (\bi), \mfs_{r_i} (\bj); \LL^1(\mfs_{r_i} (\bk), \mfs_{r_i} (\bl))\big) = \LL^m(\mfi, \mfj, \mfk, \mfl) = 0.$$ This implies that $\LL^n(\bi, \bj; \bk, \bl) = 0$. Hence, 
\begin{equation*}
\sum_{\substack{g_\pi(\bk) = \mfk \\
g_\pi(\bl) = \mfl}}\LL^n(\bi, \bj; \bk, \bl) = 0 = \LL^m(\mfi, \mfj; \mfk, \mfl).
\end{equation*}
\textbf{Case 2:  $\LL^m(\mfi, \mfj, \mfk, \mfl) = 1$}. By \eqref{eq:weight}, we know that $(g_\pi (\bk), g_\pi(\bl)) \neq (\mfk, \mfl)$ implies that $\LL^n(\bi, \bj; \bk, \bl) = 0$. Therefore, 
\begin{equation*}
\sum_{\substack{g_\pi(\bk) = \mfk, \\
		g_\pi(\bl) = \mfl}} \LL^n(\bi, \bj; \bk, \bl) = \sum_{\bk, \bl \in \{0, 1\}^n} \LL^n(\bi, \bj; \bk, \bl) = 1 = \LL^m(\mfi, \mfj; \mfk, \mfl).
\end{equation*}
The second equality is due to the stochasticity of $\LL^n$. 
\bigskip
\\
\textbf{Case 3:  $\LL^m(\mfi, \mfj, \mfk, \mfl) = t$}. We fix $\bi, \bj$ that satisfy $(g_{\pi} (\bi), g_{\pi} (\bj))  = (\mfi, \mfj)$. By Lemma \ref{lem:uniqueoutput3},  there exist unique $\bk^1, \bl^1$ satisfying  $(g_{\pi} (\bk^1), g_{\pi} (\bl^1)) = (\mfk, \mfl)$ and $\LL^n(\bi, \bj; \bk^1, \bl^1) \neq 0$. Moreover, $\LL^n(\bi, \bj; \bk^1, \bl^1) = t$. Hence, 
\begin{equation*}
\sum_{\substack{g_\pi(\bk) = \mfk, \\
g_\pi(\bl) = \mfl}} \LL^n(\bi, \bj; \bk, \bl) =  \LL^n(\bi, \bj; \bk^1, \bl^1) = t = \LL^m(\mfi, \mfj; \mfk, \mfl).
\end{equation*}
\textbf{Case 4: $\LL^m(\mfi, \mfj, \mfk, \mfl) = 1-t$}. The proof is similar to Case 3.
\end{proof}


\begin{rmk}\label{rmk:colorlabel}
One can also use real numbers to label colors, not just positive integers. The rule is that for two colors with labels $a < b$, the color $b$ has less priority than $a$. Hence, we can still sample the output of an intersection given finite input lines labeled by real numbers, using  $\{\LL^n: n \geq 1\}$. For our application of the colored model in the next section, we use negative integers to label the colors (see also Remark \ref{rmk:negcolor}).
\end{rmk}



\section{The colored model and superadditivity} \label{sec:subadditivity}
We are going to construct $\{X_{m, n}: 0 \leq m \leq n\}$ as discussed in the introduction using the colored $t$-PNG model. Before doing that, let us recall Liggett's superadditive ergodic theorem from \cite{liggett1985improved}. 
Note that the theorem was originally stated in the subadditive setting, but for our purposes, we formulate it in the superadditive setting by placing a negative sign where necessary. 
\begin{theorem}
	\label{thm:liggettergodic}
	Suppose $\{X_{m, n}\}$ is a collection of random variables that is indexed by integers $0 \leq m \leq n$ and satisfies: 
	\begin{enumerate}[leftmargin = 2em, label = (\roman*)]
		\item \label{item:subadditive} Almost surely $X_{0, 0} = 0$ and  $X_{0, n} \geq X_{0, m} + X_{m, n}$ for $0 \leq m \leq n$.
		\item \label{item:ergodic} For each $k \geq 1$, $\{X_{(n-1)k, nk}: n \geq 1\}$ is an ergodic process. 
		\item \label{item:equalind} $\{X_{m, m+k}: k \geq 0\}  \overset{d}{=} \{X_{m+1, m+k+1}: k \geq 0\}$ 
		for each $m \geq 0$.
		\item \label{item:expectation} $\mathbb{E}[X_{0, 1}^-] < \infty$ where $x^{-} = \max(-x, 0)$. 
	\end{enumerate}
	Then there exists 
a constant $\gamma = \sup_{n \geq 1} \frac{\mathbb{E}[X_{0, n}]}{n} \in (-\infty, \infty]$ satisfying   
\begin{align*}
\gamma = \lim_{n \to \infty} \frac{X_{0, n}}{n} \text{ a.s.}
\end{align*}
\end{theorem}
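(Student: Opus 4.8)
This is the superadditive reformulation of Liggett's strengthening \cite{liggett1985improved} of Kingman's subadditive ergodic theorem, so the plan is to recall the structure of its proof: first identify the constant $\gamma$ via Fekete's lemma, then prove the lower bound $\liminf_n X_{0,n}/n\ge\gamma$ by a routine blocking argument together with Birkhoff's theorem, and finally prove the matching upper bound $\limsup_n X_{0,n}/n\le\gamma$, which is the delicate half, by a greedy covering argument. For the constant: conditions (i) and (iii) give $\mathbb{E}[X_{0,m+n}]\ge\mathbb{E}[X_{0,m}]+\mathbb{E}[X_{m,m+n}]=\mathbb{E}[X_{0,m}]+\mathbb{E}[X_{0,n}]$, so $n\mapsto\mathbb{E}[X_{0,n}]$ is superadditive; iterating (i) gives $X_{0,m}\ge\sum_{i=0}^{m-1}X_{i,i+1}$, whence $\mathbb{E}[X_{0,m}^-]\le m\,\mathbb{E}[X_{0,1}^-]<\infty$ for all $m$ by (iii)--(iv), and Fekete's lemma yields $\lim_n\mathbb{E}[X_{0,n}]/n=\sup_n\mathbb{E}[X_{0,n}]/n=:\gamma\in(-\infty,\infty]$.

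For the lower bound, fix $m\ge1$ and write $n=km+r$ with $0\le r<m$; iterating (i),
\[
X_{0,n}\ \ge\ X_{0,m}+\sum_{j=1}^{k-1}X_{jm,(j+1)m}+X_{km,n}.
\]
By (iii) the sequence $(X_{jm,(j+1)m})_{j\ge1}$ is stationary and by (ii), with step size $m$, it is ergodic, so Birkhoff's theorem (applicable since each term has finite negative part) gives $\frac1k\sum_{j=1}^{k-1}X_{jm,(j+1)m}\to\mathbb{E}[X_{0,m}]$ a.s. Since $k/n\to1/m$ and $X_{0,m}/n\to0$, and since the remainder satisfies $X_{km,n}^-\le\sum_{i=km}^{n-1}X_{i,i+1}^-$ while $\frac1n\sum_{i=0}^{n-1}X_{i,i+1}^-$ converges (Birkhoff with step $1$), which forces $X_{km,n}^-/n\to0$, we obtain $\liminf_n X_{0,n}/n\ge\mathbb{E}[X_{0,m}]/m$ a.s. Taking the supremum over $m$ gives $\liminf_n X_{0,n}/n\ge\gamma$ a.s.; in particular the theorem is proved if $\gamma=\infty$.

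For the upper bound assume $\gamma<\infty$; replacing $X_{m,n}$ by $X_{m,n}-(n-m)\gamma$ we may take $\gamma=0$, so $\mathbb{E}[X_{0,n}]\le0$ for all $n$ and the goal becomes $\limsup_n X_{0,n}/n\le0$ a.s. Set $\phi:=\limsup_n X_{0,n}/n$; by the lower bound $\phi\ge0$, and using (ii)--(iii) one checks $\phi$ equals a constant a.s. Suppose $\phi>0$ and fix $0<\epsilon<\phi$. Since $\limsup_n X_{0,n}/n=\phi$ a.s.\ and, by stationarity, the analogous statement holds from every shifted starting time, for each horizon $M$ one can build greedily on $[0,N)$ a partition into blocks $[t_i,t_{i+1})$ such that either $X_{t_i,t_{i+1}}\ge(t_{i+1}-t_i)(\phi-\epsilon)$, or $t_{i+1}=t_i+1$ (a ``forced'' single step, used only when no admissible block of length $\le M$ is available). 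Then (i) gives $X_{0,N}\ge\sum_i X_{t_i,t_{i+1}}\ge(\phi-\epsilon)\cdot(\text{total length of good blocks})-\sum_{\text{forced}}X_{t_i,t_i+1}^-$. Taking expectations, writing $\delta_M$ for the asymptotic density of forced steps — which tends to $0$ as $M\to\infty$, since a good time occurs a.s.\ and hence the probability of no good time within $M$ steps vanishes — and noting the forced-step losses are controlled by $\mathbb{E}[X_{0,1}^-]<\infty$ through (iv) and (iii), then dividing by $N$ and letting $N\to\infty$, one obtains $0=\gamma\ge(1-\delta_M)(\phi-\epsilon)-\delta_M\,\mathbb{E}[X_{0,1}^-]$; letting $M\to\infty$ and then $\epsilon\to0$ forces $\phi\le0$, contradicting $\phi>0$. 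Hence $\phi\le0=\gamma$, and together with the lower bound, $X_{0,n}/n\to\gamma$ a.s.

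The main obstacle is this last step: making the greedy covering rigorous requires the a.s.\ constancy (and shift-compatibility) of $\phi$, a careful treatment of the blocks straddling $0$ and $N$, and the control of the forced single steps — this is exactly where the hypothesis $\gamma<\infty$ and the one-sided integrability (iv) enter, and where Liggett's weakened hypotheses (ii)--(iii) (rather than full stationarity of the array $(X_{m,n})$) have to be exploited with care. Since this is a cited theorem, in the paper we simply invoke \cite{liggett1985improved}.
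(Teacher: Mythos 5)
The paper does not prove this theorem; it is cited as Liggett's strengthening of Kingman's subadditive ergodic theorem (\cite{liggett1985improved}), and you correctly note at the end that one simply invokes that reference. Your sketch faithfully outlines the standard proof structure---Fekete's lemma for the constant, a blocking argument with Birkhoff's theorem for the lower bound, and a covering argument for the upper bound---but a rigorous account of the upper bound would need to justify the a.s.\ constancy of $\phi$ under the weakened hypotheses \ref{item:ergodic}--\ref{item:equalind} (which give stationarity only along subsequences, not of the full array) and to control the density of ``forced'' steps with care; this is precisely the technical content of \cite{liggett1985improved}, so deferring to that citation, as the paper does, is the right call.
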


\begin{definition}[Step colored $t$-PNG model]
We consider a Poisson point process of nucleations on $\R_{>0} \times \R_{> 0}$ with density $1$. 
We can assume that there are no nucleations with an integer $x$- or $y$-coordinate and that no two nucleations have the same $x$- or $y$-coordinates, since these events have probability zero. 
Fix arbitrary integers $m, n \in \Z_{\geq 0}$. We color the nucleations inside the unit square $[m, m+1] \times [n, n+1]$ with the color $-\min(m+1, n+1)$. 
In other words, we assign the color $-m-1$ to nucleations lying in the L-shape area $[m, \infty) \times [m, m+1] \cup [m, m+1] \times [m, \infty)$.  
For each nucleation with a given color $k$, the lines emanating from it in the upward and rightward directions also have color $k$. When horizontal and vertical lines intersect, the output lines emanate from the intersection following the stochastic matrix $\{\LL^n: n \geq 1\}$ defined in Definition \ref{def:nLmatrix} (see also Remark \ref{rmk:colorlabel}). 
The model is referred to as the \emph{step colored $t$-PNG model}. See Figure \ref{fig:multi_color}. 
\end{definition}
\begin{rmk}\label{rmk:negcolor}
The reason that we label the colors with negative integers instead of positive integers as in Section \ref{sec:multicolor} is that to apply Theorem \ref{thm:liggettergodic}, we want to construct a model with infinite colors such that nucleations closer to the axes have lower priority. 
\end{rmk}
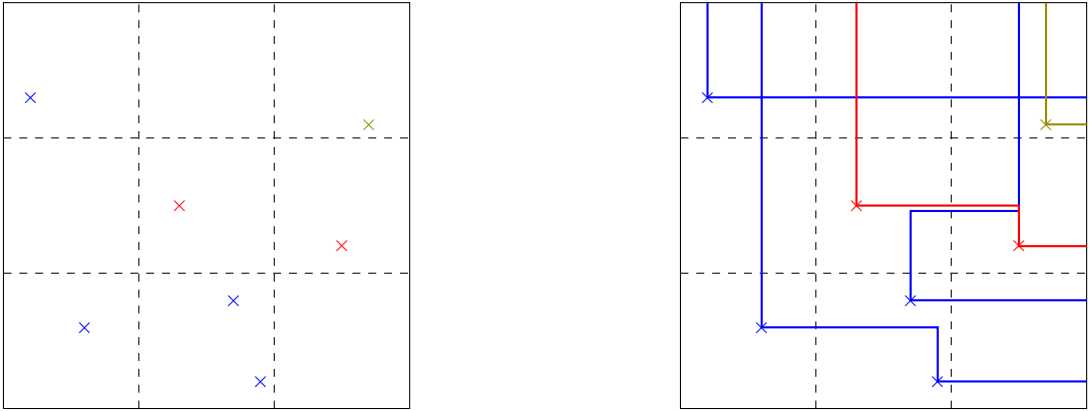
\begin{figure}[b]
\centering

\begin{tikzpicture}[scale = 1.8]
\begin{scope}
\draw(0, 0) -- (3, 0) -- (3, 3) -- (0, 3) -- (0, 0);
\foreach \x in {1, 2}
{\draw[dashed] (0, \x) -- (3, \x);
\draw[dashed] (\x, 0) -- (\x, 3);
}
\node at (0.6, 0.6) {\blue{$\times$}};
\node at (0.2, 2.3) {\blue{$\times$}};
\node at (1.7, 0.8)  {\blue{$\times$}};
\node at (1.9, 0.2) {\blue{$\times$}};
\node at (1.3, 1.5) {\red{$\times$}};
\node at (2.5, 1.2) {\red{$\times$}};
\node at (2.7, 2.1) {\olive{$\times$}};

\draw[thick,blue] (0.2, 3) -- (0.2, 2.3) -- (0.6, 2.3) -- (0.6, 0.6) -- (1.9, 0.6) -- (1.9, 0.2) -- (3, 0.2);
\draw[thick, blue] (0.6, 3) -- (0.6, 2.3) -- (1.7, 2.3)  -- (3, 2.3);
\draw[thick, blue] (3, 0.8) -- (1.7, 0.8) -- (1.7, 1.46) -- (2.5, 1.46) -- (2.5, 3);
\draw[thick, red] (1.3, 3) -- (1.3, 1.5) -- (2.5, 1.5);
\draw[thick, red] (3, 1.2) -- (2.5, 1.2) -- (2.5, 1.5);
\draw[thick, olive] (2.7, 3) -- (2.7, 2.1) -- (3, 2.1);
\end{scope}

\begin{scope}[xshift = -5cm]
\draw(0, 0) -- (3, 0) -- (3, 3) -- (0, 3) -- (0, 0);
\foreach \x in {1, 2}
{\draw[dashed] (0, \x) -- (3, \x);
\draw[dashed] (\x, 0) -- (\x, 3);
}
\node at (0.6, 0.6) {\blue{$\times$}};
\node at (0.2, 2.3) {\blue{$\times$}};
\node at (1.7, 0.8)  {\blue{$\times$}};
\node at (1.9, 0.2) {\blue{$\times$}};
\node at (1.3, 1.5) {\red{$\times$}};
\node at (2.5, 1.2) {\red{$\times$}};
\node at (2.7, 2.1) {\olive{$\times$}};
\end{scope}

\end{tikzpicture}
\caption{Left panel: We sample a Poisson point process with density $1$ and assign the nucleations different colors using the aforementioned rules. Although this happens on the entire first quadrant, we just show a picture of the square $[0, 3] \times [0, 3]$. Right panel: We sample the step colored $t$-PNG model in $[0,3] \times [0, 3]$ using these Poisson nucleations. Note that olive has a higher priority than red, and red has a higher priority than blue. Hence, the behavior of the olive lines does not depend on that of the red and blue lines. The behavior of the red lines does not depend on that of the blue lines.}
\label{fig:multi_color}
\end{figure}
 
We proceed to define the random variables $\{X_{m, n}, m, n \in \Z_{\geq 0}, m \leq n\}$. 
Let $H_{m, n}$ be the set of intersection points between the horizontal segment $[m, n] \times \{n\}$ and vertical lines in the step colored $t$-PNG model, where in the case of multiple lines traveling together, we count the intersection point just once. Let $v^{[m, n]}_z$ denote the number of vertical lines that go through $z$ with colors in $\{-n, \dots, -m-1\}$. We define    
\begin{equation}\label{eq:Xmn}
X_{m, n} = \sum_{z \in H_{m, n}} \big(v_z^{[m, n]} \text{ mod } 2\big).
\end{equation}
In other words, $X_{m, n}$ is the number of vertical lines with color $-n$ that cross the segment $[m, n] \times \{n\}$ after we recolor all lines with colors in $\{-n, \dots, -m-1\}$ with the color $-n$ and apply the mod $2$ erasure procedure.
\begin{proposition}\label{prop:equalind}
We have $\{X_{0, k}, k \in \Z_{\geq 1}\} = \{N(k, k), k \in \Z_{\geq 1}\}$ in distribution. 
\end{proposition}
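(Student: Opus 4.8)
The plan is to recognize $X_{0,k}$ as the height function at $(k,k)$ of the single-colored $t$-PNG model obtained by collapsing all colors of the step colored $t$-PNG model and erasing every pair of lines, and then to invoke the Mod 2 Erasure property (Proposition~\ref{prop:mod2erasure}).

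First I would observe that the step colored $t$-PNG model inside the box $[0,k]^2$ is a measurable function of the Poisson nucleations lying in $[0,k]^2$ alone: the bottom and left edges of the box sit on the coordinate axes, so no line can enter $[0,k]^2$ from below or from the left, and since lines travel only upward and rightward, no line can enter from the top or the right either; moreover, any path meeting the segment $[0,k]\times\{k\}$ emanates from a nucleation in $[0,k]^2$ and stays inside $[0,k]^2$ up to that crossing. By the coloring rule every nucleation in $[0,k]^2$ carries a color in $\{-k,\dots,-1\}$, and colors are conserved along the dynamics, so every vertical line meeting $[0,k]\times\{k\}$ carries a color in $\{-k,\dots,-1\}$. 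Hence for $z\in H_{0,k}$ the number $v_z^{[0,k]}$ is simply the total number of vertical lines through $z$, and $X_{0,k}=\sum_{z\in H_{0,k}}(v_z^{[0,k]}\bmod 2)$ is exactly the number of points of $[0,k]\times\{k\}$ at which the ensemble obtained by recoloring all these lines with one color and erasing every pair of lines has a vertical line.

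Next I would apply Proposition~\ref{prop:mod2erasure} with $\pi$ the one-block partition, so that $g_\pi$ collapses all colors to one. Both the step colored $t$-PNG model and the single-colored $t$-PNG model are generated by the same sequential procedure: order the intersection points by increasing coordinate sum and, at each one, sample the outgoing lines from the incoming lines using $\LL^n$ (respectively $\LL^1$). Proposition~\ref{prop:mod2erasure} states that the conditional law of the $g_\pi$-image of a vertex given its $g_\pi$-image inputs is precisely the $\LL^1$-transition, and a standard lumpability argument for Markov chains then yields that the $g_\pi$-projection of the step colored $t$-PNG model --- which is driven by an intensity-one Poisson nucleation field --- has the law of the single-colored $t$-PNG model. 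Let $N^{\mathrm{proj}}$ denote the height function of this projected ensemble. Walking from the origin up the left edge and then right along the top edge of $[0,k]^2$: no horizontal line meets the $y$-axis (each such line lies strictly to the right of the nucleation from which it emanates), and a.s.\ no nucleation, hence no intersection point, has an integer coordinate, so the segment $[0,k]\times\{k\}$ meets the projected vertical lines transversally and crosses no horizontal line; therefore $N^{\mathrm{proj}}(k,k)$ equals the number of projected vertical lines crossing $[0,k]\times\{k\}$, which is exactly the quantity identified with $X_{0,k}$ above. Since this holds simultaneously for all $k$ on one probability space, $\{X_{0,k}:k\ge1\}$ coincides with $\{N^{\mathrm{proj}}(k,k):k\ge1\}$, a deterministic functional of the projected ensemble, and the latter is equal in law to $\{N(k,k):k\ge1\}$; this is the claimed identity in distribution.

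The step I expect to be the main obstacle is the passage from the vertex-level statement of Proposition~\ref{prop:mod2erasure} to the ensemble-level statement that the projection of the colored model is genuinely distributed as the single-colored $t$-PNG model: one must set up both models as Markov chains built intersection-by-intersection --- noting that the set of intersection points is itself random --- verify that ordering by coordinate sum is a legitimate generation order, and check that the projection map $g_\pi$ intertwines the two families of transition kernels, after which lumpability applies. The remaining geometric facts (that the configuration in a box depends only on nucleations in that box, that there are no crossings along the left edge, and that $[0,k]\times\{k\}$ almost surely avoids all nucleations and intersection points) are elementary.
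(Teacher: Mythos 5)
Your argument is correct and follows essentially the same route as the paper's: invoke the Mod 2 Erasure property (Proposition~\ref{prop:mod2erasure}) with the one-block partition to identify the color-collapsed step model on $[0,n]^2$ with the single-colored $t$-PNG model, then read off $X_{0,k}=N(k,k)$ from the definition~\eqref{eq:Xmn}. You supply more detail than the paper on the vertex-to-ensemble lumpability step and the geometric facts about lines not entering the box from the axes, but these are embellishments of the same proof, not a different approach.
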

\begin{proof}
It suffices to prove that for each $n \in \N$, 
$\{X_{0, k}, k = 1 \dots, n\} \overset{d}{=} \{N(k, k), k = 1, \dots, n\}$.
The lines in the square $[0, n] \times [0, n]$ have colors that belong to $\{-n, \dots, -1\}$. Replace 
these colors with a single color and apply the mod $2$ erasure procedure. 
Taking $m=1$ in Proposition \ref{prop:mod2erasure}, we see that the resulting model is just the 
single-colored
$t$-PNG model.
Note that $X_{0, k}$ in \eqref{eq:Xmn} is exactly the number of lines crossing the segment $[0, k] \times \{k\}$, which equals $N(k, k)$ for $k \in \{1, \dots, n\}$. This concludes the proposition. 
\end{proof}
\begin{proposition}\label{prop:item24}
The stochastic process $\{X_{m, n}: m, n \in \mathbb{Z}_{\geq 0}, m \leq n\}$ satisfies conditions \ref{item:ergodic}-\ref{item:expectation} of Theorem \ref{thm:liggettergodic}.
\end{proposition}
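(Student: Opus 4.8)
The plan is to reduce conditions \ref{item:ergodic}--\ref{item:expectation} to the single structural fact that, for integers $0\le a\le b$, the random variable $X_{a,b}$ is a deterministic function of the Poisson nucleations lying in the square $[a,b]^2$ together with the independent coins used to resolve the intersections occurring inside that square, and that the joint law of a family $\{X_{a_i,b_i}\}_i$ is unchanged if all of the squares $[a_i,b_i]^2$ are translated by the same integer vector. Granting this, the three conditions are immediate. Condition \ref{item:expectation} holds because $X_{0,1}=\sum_{z\in H_{0,1}}\big(v_z^{[0,1]}\bmod 2\big)\ge 0$, so $X_{0,1}^-=0$. Condition \ref{item:equalind} is exactly the translation statement applied with shift $(1,1)$: translating the nucleation process by $(1,1)$ sends the square $[m,m+k]^2$ to $[m+1,m+k+1]^2$ for every $k$, hence carries the process $\{X_{m,m+k}\}_{k\ge0}$ to $\{X_{m+1,m+k+1}\}_{k\ge0}$, and a translate of a Poisson process is again a Poisson process. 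For condition \ref{item:ergodic}, fix $k\ge1$: the squares $[(n-1)k,nk]^2$, $n\ge1$, have pairwise disjoint interiors, so the variables $X_{(n-1)k,nk}$ are mutually independent; they are identically distributed by the translation statement; hence $\{X_{(n-1)k,nk}\}_{n\ge1}$ is i.i.d., in particular a stationary ergodic sequence.

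The structural fact itself I would prove in three steps. First, the colored $t$-PNG configuration inside $[0,b]^2$ is determined by the nucleations inside $[0,b]^2$ (and the coins of the intersections there), since every line of the colored model is a monotone up-right path and there are no sources or sinks on the axes; moreover only the colors $\{-b,\dots,-1\}$ can occur inside $[0,b]^2$, because a nucleation of color $-c$ with $c>b$ lies in $[c-1,\infty)^2\subseteq[b,\infty)^2$, so its lines never reach the interior of $[0,b]^2$. Second, by the color-ignorance property (Proposition \ref{prop:colorignore}, after relabeling the colors $-b,\dots,-1$ as $1,\dots,b$ in decreasing order of priority, cf.\ Remark \ref{rmk:colorlabel}), the sub-ensemble of lines carrying a color in $\{-b,\dots,-a-1\}$ is itself a colored $t$-PNG configuration and does not depend on the lines of the lower-priority colors $\{-a,\dots,-1\}$; since $X_{a,b}$ --- defined through $H_{a,b}$ and $v_z^{[a,b]}$ --- is a function of this sub-ensemble alone, those lower-priority colors may be discarded. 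Third, every nucleation whose color lies in $\{-b,\dots,-a-1\}$ sits in an L-shaped region with corner $(c,c)$ for some $c\in\{a,\dots,b-1\}$ and hence has both coordinates $\ge a$; if it moreover lies outside $[a,b]^2$ then one of its coordinates exceeds $b$, so by up-right monotonicity its lines never enter $[a,b]^2$. Therefore the restriction of the sub-ensemble to $[a,b]^2$ coincides with the colored $t$-PNG model run in that square from the nucleations inside it with no boundary data, and this restriction already determines $X_{a,b}$, since all the crossings counted by $H_{a,b}$, and all the line-history relevant to $v_z^{[a,b]}$, lie inside $[a,b]^2$. The translation claim then follows because the L-shape coloring of $[a,b]^2$ agrees, up to a global shift of all color labels, with the L-shape coloring of any integer translate of the square, and the matrices $\LL^n$ --- hence the colored dynamics and the statistic $X$ --- are invariant under a simultaneous relabeling of all colors; combine this with translation invariance of the Poisson process.

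I expect the second step of the structural fact to be the only real obstacle. A line carrying a low-priority color in $\{-a,\dots,-1\}$ may emanate from a nucleation lying far outside $[a,b]^2$ and may genuinely cross the segment $[a,b]\times\{b\}$, so the reason such lines do not influence $X_{a,b}$ is precisely the color-ignorance property of the matrices $\{\LL^n\}$: the higher-priority colors that contribute to $X_{a,b}$ are oblivious to them. Once this is established, the independence of $X_{a,b}$ and $X_{a',b'}$ for squares with disjoint interiors and the translation invariance are routine, and conditions \ref{item:ergodic}--\ref{item:expectation} follow as above.
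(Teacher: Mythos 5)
Your proposal is correct and takes essentially the same route as the paper: both use Proposition~\ref{prop:colorignore} to argue that $X_{m,n}$ depends only on the nucleations (and internal randomness) inside $[m,n]^2$, and then deduce \ref{item:ergodic} from independence across disjoint squares, \ref{item:equalind} from translation invariance of the Poisson process together with the color-shift invariance of $\{\LL^n\}$, and \ref{item:expectation} from non-negativity of $X_{0,1}$. Your reorganization into a single "structural fact" is a clean way to package the paper's argument, but the underlying ideas and the reliance on color ignorance are identical.
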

\begin{proof}
We first prove \ref{item:ergodic}. Consider the square $[m, n] \times [m, n]$. There are lines flowing inside through the left boundary $\{m\} \times [m, n]$ and the bottom boundary $[m, n] \times \{m\}$. These lines have colors that belong to $\{-m, \dots, -1\}$. The Poisson nucleations in $[m, n] \times [m, n]$ have colors in $\{-n, \dots, -m-1\}$. Note that the color $i$ takes priority over $j$ if $i < j$, so the lines that emanate from the Poisson nucleations in $[m, n] \times [m, n]$ have higher priority than the lines entering through the left and bottom boundaries. Therefore, by Proposition \ref{prop:colorignore}, we can sample the colored $t$-PNG model in the square $[m,n] \times [m,n]$ just using the colors $\{-n, \dots, -m-1\}$ and ignore the lower priority lines entering from the left and bottom. Hence, the distribution of $X_{m, n}$ is independent of the number and location of the lines entering the bottom and left boundaries of $[m, n] \times [m, n]$. This implies that for each $k \geq 1$, the random variables $\{X_{(n-1)k, nk}, n \geq 1\}$ are independent. It is straightforward to see that $X_{(n-1)k, nk}$ has the same distribution as $N(k, k)$ for all $n \geq 1$, so therefore this sequence is i.i.d and hence ergodic.

We proceed to prove \ref{item:equalind}. It suffices to show that for arbitrary $m \in \Z_{\geq 0}$, 
\begin{equation}\label{eq:equalind}
\{X_{m, m+k}, k \geq 0\} \overset{d}{=} \{X_{0, k}, k \geq 0\}.
\end{equation}
We look at the step colored $t$-PNG model restricted to 
$[m, \infty) \times [m, \infty)$. Note that there are lines with colors in $\{-m, \dots, -1\}$ entering from the left and bottom boundaries of $[m, \infty) \times [m, \infty)$. By Proposition \ref{prop:colorignore}, the behavior of lines in $[m, \infty) \times [m, \infty)$ with colors less than $-m$  is unaffected by the lower priority lines entering from the boundary. 
This implies that after a diagonal shift by $(m, m)$, the lines with colors $i_1, \dots, i_k \in \Z_{\leq -m-1}$ in $[m, \infty) \times [m, \infty)$ behave the same (in distribution) as the lines with colors $i_1 + m, \dots, i_k + m$ 
in $[0, \infty) \times [0, \infty)$. Hence, we conclude \eqref{eq:equalind}.  

Finally, since $X_{0, 1}$ is non-negative, $X_{0, 1}^{-} = 0$. Hence, \ref{item:expectation} holds. 
\end{proof}

Let us proceed to prove that $\{X_{m, n}, m, n \in \Z_{\geq 0}, m \leq n\}$ also satisfies the superadditive condition  \ref{item:subadditive} in Theorem \ref{thm:liggettergodic}. 
We begin with some preparation. In the square $[0, n] \times [0, n]$, we replace the colors $\{-m, \dots, -1\}$ with the color $-1$ and replace the colors $\{-n, \dots, -m-1\}$ with the color $-2$. After that, as long as there are two lines with the same color that travel together, we erase them.
By Proposition \ref{prop:mod2erasure}, the resulting model is a two-colored $t$-PNG model. In particular, the Poisson nucleations have color $-1$ in the L-shaped area $[0, m] \times [0, n] \cup [0, n] \times [0, m]$. The nucleation points have color $-2$ in the square $[m, n] \times [m, n]$.   
\begin{figure}[t]
\centering 
\begin{tikzpicture}
\draw (0, 0) -- (5, 0) -- (5, 5) -- (0, 5) -- (0, 0);
\draw[dashed] (3, 0) -- (3, 5);
\draw[dashed] (0, 3) -- (5, 3);
\node at (1, 2.1) {\blue{$\times$}};
\node at (2, 1) {\blue{$\times$}};
\node at (4, 1.5) {\blue{$\times$}}; 
\node at (2.5, 3.7) {\blue{$\times$}}; 
\node at (1.5, 4.3) {\blue{$\times$}};
\node at (3.3, 3.3) {\red{$\times$}};
\node at (4.5, 3.7) {\red{$\times$}};
\draw[thick, blue] (1, 2.1) -- (1, 5);
\draw[thick, blue] (1.5, 4.3) -- (1.5, 5);
\draw[thick, blue] (5, 1) -- (2, 1) -- (2, 5);
\draw[thick, blue] (1, 2.1) -- (4, 2.1) -- (4, 1.5) -- (5, 1.5);
\draw[thick, blue] (1.5, 4.3) -- (2.5, 4.3) -- (2.5, 3.7) -- (3.26, 3.7) -- (3.26, 5);
\draw[thick, red] (5, 3.3) -- (3.3, 3.3) -- (3.3, 5);
\draw[thick, red] (4.5, 5) -- (4.5, 3.7) -- (5, 3.7);
\node at (-0.3, 3) {$m$};
\node at (3, -0.3) {$m$};
\node at (-0.3, 5) {$n$};
\node at (5, -0.3) {$n$};
\end{tikzpicture}
\caption{We provide a possible sampling of the two-colored $t$-PNG model in the square $[0, m+n] \times [0, m+n]$.
The dashed lines $x = m$ and $y = m$ divide the square $[0, n] \times [0, n]$ into four rectangles. Blue represents the color $-1$, and red represents the color $-2$. In this example, we have $Q_1 = 2$ and $Q_2 = Q_{1,2 } = P_1 =  1$.}
\end{figure}

For the resulting two-colored $t$-PNG  model, let $Q_1$ be the number of be lines with color $-1$ that cross $[0, m] \times \{m\}$, let $Q_2$ be the number of vertical lines with color $-2$ that cross the segment $[m, n] \times \{n\}$, and let $P_1$ be the number of horizontal lines with color $-1$ that cross $\{m\} \times [m, n]$. Finally, let $Q_{1, 2}$ be the number of pairs of vertical lines of colors $-1$ and $-2$ that travel together and cross $[m, n] \times \{n\}$.

Consider the single-colored $t$-PNG model. For each Poisson nucleation or intersection point, the number of lines going upward or leftward equals the number of lines going downward or rightward (see Figure \ref{fig:L-matrix}). The next lemma follows immediately. 
\begin{lemma}\label{lem:lineconserve}
Consider the (single-colored) $t$-PNG model. Fix an arbitrary rectangle. The number of lines that cross the top and left boundaries of the rectangle equals the number of lines that cross the bottom and right boundaries.  
\end{lemma}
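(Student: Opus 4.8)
The plan is to promote the one-line local balance recorded in the paragraph preceding the lemma into a global conservation statement by a discrete divergence (Green's-theorem) argument. First I would orient every vertical segment of the $t$-PNG configuration downward and every horizontal segment rightward. At a Poisson nucleation or an intersection point $v$ with vertex configuration $(i,j;k,l)$ in the notation of Definition~\ref{def:Lmatrix} (so $i,j,k,l\in\{0,1\}$ count the segments on the bottom, left, top, and right of $v$), the segments pointing \emph{into} $v$ are exactly those on the top and on the left, numbering $k+j$, while the segments pointing \emph{out of} $v$ are those on the bottom and on the right, numbering $i+l$. Running through the five configurations with nonzero $\LL^1$-weight in Definition~\ref{def:Lmatrix} together with the emission rule at a nucleation --- one segment up, one segment right, nothing entering, i.e.\ $(i,j;k,l)=(0,0;1,1)$ --- shows $k+j=i+l$ in every case, and the identity holds trivially at any point of $\R^2$ that is not a vertex. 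So the oriented picture is a divergence-free current on the plane.

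The second step is to integrate this conservation law over the given rectangle $R=[a,b]\times[c,d]$. Almost surely $R$ contains only finitely many vertices, and after an arbitrarily small translation of $R$ (or directly from the genericity of the nucleations) none of them lies on $\partial R$. Summing $(k+j)-(i+l)=0$ over the vertices in the interior of $R$, any segment with both endpoints inside $R$ cancels, since it is an outgoing segment at one of its endpoints and an incoming segment at the other; the surviving terms are exactly the segments crossing $\partial R$. A segment meeting the top side of $R$ is vertical and oriented downward, hence points into $R$; a segment meeting the left side is horizontal and oriented rightward, hence also points into $R$; segments meeting the bottom or right sides point out of $R$. A segment that spans $R$ between two opposite sides crosses one of $\{\text{top},\text{left}\}$ and one of $\{\text{bottom},\text{right}\}$, so it contributes $1$ to each side of the claimed identity and does not enter the interior sum. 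Collecting everything, the number of lines crossing the top plus the number crossing the left equals the inward flux, which equals the outward flux, which equals the number of lines crossing the bottom plus the number crossing the right; this is the lemma.

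There is essentially no deep obstacle here --- the text already notes that the lemma ``follows immediately'' --- so the only thing that needs a little attention is the boundary bookkeeping: handling lines that enter and leave $R$ through opposite sides or that meet $\partial R$ more than once, and counting a ``crossing'' consistently. Packaging the argument as the flux of a divergence-free current on $\R^2$, rather than as a raw double count of segment endpoints, makes these points automatic, and once $k+j=i+l$ has been checked at each vertex type (the verification already indicated before the lemma), nothing further is required.
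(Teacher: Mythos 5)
Your proof is correct and is essentially the argument the paper has in mind: the paper records the local balance $k+j=i+l$ at every vertex (the sentence just before the lemma, with Figure \ref{fig:L-matrix}) and asserts the global statement "follows immediately," which is exactly the discrete divergence/flux bookkeeping you carry out explicitly. The only thing worth noting is that your phrase "the surviving terms are exactly the segments crossing $\partial R$" is momentarily imprecise (segments spanning $R$ cross $\partial R$ but drop out of the interior sum), but you correct this in the very next sentence, so the argument is complete.
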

\begin{lemma}\label{lem:someequality}
The following result holds:
\begin{align}
\label{eq:u0m}
&
X_{0, m} = Q_1,
\\
\label{eq:umn}
&X_{m, n}= Q_2,  \\
\label{eq:u0n}
&X_{0, n} \geq Q_1 + P_1 + Q_2 - Q_{1, 2}.
\end{align} 
\end{lemma}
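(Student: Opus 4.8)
The plan is to read all three quantities on the left off of the two-colored model defined just above the lemma. By Proposition~\ref{prop:mod2erasure}, applied both to the partition $\pi=\{\{-n,\dots,-m-1\},\{-m,\dots,-1\}\}$ and to the one-block partition, this two-colored model records the step colored model as follows: at every half-edge, a color-$-1$ line is present precisely when an odd number of the step model's lines with colors in $\{-m,\dots,-1\}$ pass through it; a color-$-2$ line is present precisely when an odd number of lines with colors in $\{-n,\dots,-m-1\}$ pass through it; and, since the one-block partition yields $\LL^1$, in the single-colored projection a line is present precisely when exactly one of the two coarse colors is present. Because at most one line of a given color occupies a given half-edge, for each color $c$ the number of color-$c$ lines crossing a segment equals the number of points of that segment carrying color $c$. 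Granting this dictionary, \eqref{eq:u0m} and \eqref{eq:umn} are immediate from \eqref{eq:Xmn}: $X_{0,m}$ is by definition the number of points of $[0,m]\times\{m\}$ through which an odd number of colors-$\{-m,\dots,-1\}$ lines pass, i.e. the number of color-$-1$ lines crossing $[0,m]\times\{m\}$, which is $Q_1$; similarly $X_{m,n}=Q_2$.

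For \eqref{eq:u0n} I would first note the confinement of the color-$-2$ lines: every color-$-2$ nucleation lies in $[m,n]\times[m,n]$ and (a.s.) has non-integer coordinates, and lines only move up and to the right, so every color-$-2$ line segment has $x$-coordinate strictly greater than $m$; moreover (a.s.) no vertical segment of any color lies on the line $\{x=m\}$, since a vertical segment inherits the $x$-coordinate of the nucleation that spawned it. Thus no color-$-2$ line meets $[0,m]\times\{m\}$, $\{m\}\times[m,n]$ or $[0,m]\times\{n\}$, and we may split $[0,n]\times\{n\}=\big([0,m]\times\{n\}\big)\sqcup\big((m,n]\times\{n\}\big)$ up to a null set. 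Using the dictionary above, $X_{0,n}$ equals the number of points of $[0,n]\times\{n\}$ at which exactly one coarse color is present. On $(m,n]\times\{n\}$ this number is at least the number of points carrying a color-$-2$ line and no color-$-1$ line, which is $Q_2-Q_{1,2}$ (the color-$-2$ points being $Q_2$ and those also carrying color $-1$ being $Q_{1,2}$). On $[0,m]\times\{n\}$ no color-$-2$ line is present, so that number equals the number of color-$-1$ lines crossing $[0,m]\times\{n\}$; applying Lemma~\ref{lem:lineconserve} to the single-colored projection on the rectangle $[0,m]\times[m,n]$ — whose left edge lies on the $y$-axis and is crossed by no line — this count equals the number of projection lines crossing $[0,m]\times\{m\}$ plus the number crossing $\{m\}\times[m,n]$, and since both of those segments are free of color-$-2$ lines the projection lines there are exactly the color-$-1$ lines, giving $Q_1$ and $P_1$. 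Summing the two parts yields $X_{0,n}\ge Q_1+P_1+Q_2-Q_{1,2}$.

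The substantive work is entirely in setting up the dictionary and the confinement observation; once those are in hand, \eqref{eq:u0m} and \eqref{eq:umn} are definitional and \eqref{eq:u0n} needs only the single application of the conservation Lemma~\ref{lem:lineconserve} described above. I expect the main obstacle to be bookkeeping care: making sure that ``color $c$ is present at a point'' genuinely tracks the parity of step-model line counts (so Proposition~\ref{prop:mod2erasure} is used correctly), and that the three segments $[0,m]\times\{m\}$, $\{m\}\times[m,n]$, $[0,m]\times\{n\}$ are indeed color-$-2$-free so that the projection counts there reduce to $Q_1$, $P_1$ and the left half of $X_{0,n}$.
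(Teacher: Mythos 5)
Your proposal is correct and follows essentially the same route as the paper: both read $X_{0,m}=Q_1$ and $X_{m,n}=Q_2$ directly off the two-coarse-color reduction of the definition~\eqref{eq:Xmn}, and both establish the inequality by splitting $[0,n]\times\{n\}$ at $x=m$, lower-bounding the right part by $Q_2-Q_{1,2}$, and applying Lemma~\ref{lem:lineconserve} to the rectangle $[0,m]\times[m,n]$ (where only color~$-1$ is present) to identify the left part with $Q_1+P_1$. Your version spells out the confinement of color-$-2$ lines and the null-set bookkeeping that the paper leaves implicit, but the substance is the same.
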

\begin{proof}
Recall that we obtain the two-colored $t$-PNG  model in $[0, n] \times [0, n]$ by replacing the colors $\{-n, \dots, -m-1\}$ with the color $-2$, replacing the colors $\{-m, \dots, -1\}$ with the color $-1$ and erasing pairs of lines with the same color. The erasure corresponds to the mod 2 erasure procedure in \eqref{eq:Xmn}. Hence, 
equations \eqref{eq:u0m} and \eqref{eq:umn} directly follow from \eqref{eq:Xmn}.

We proceed to prove \eqref{eq:u0n}. Note that $X_{0, n}$ is the number of single vertical lines (i.e. the line does not travel in a pair) that cross the segment  $[0, n] \times \{n\}$ in the two-colored $t$-PNG  model. We decompose 
\begin{equation}\label{eq:Xdecomp}
X_{0, n} = Y_1 + Y_2,
\end{equation}
where $Y_1$
equals the number of vertical lines with the color $-1$ that cross $[0, m] \times \{n\}$ and 
$Y_2$ equals the number of single vertical lines of either color 
that cross $(m, n] \times \{n\}$, excluding pairs. Note that in the rectangle $[0, m] \times [m, n]$, there are only lines with color $-1$. 
Applying Lemma \ref{lem:lineconserve} to the rectangle $[0, m] \times [m, n]$, we have
$Y_1 = Q_1 + P_1$. By definition, $Y_2 \geq Q_2 - Q_{1, 2}$. Using this together with \eqref{eq:Xdecomp}, we conclude \eqref{eq:u0n}.
\end{proof}

\begin{proposition}\label{prop:item1}
We have	$X_{0, 0} = 0$ and $X_{0, n} \geq X_{0, m} + X_{m, n}$ for $0 \leq m \leq n$. Hence, $\{X_{m, n}, m \leq n \in \Z_{\geq 0}\}$ satisfies \ref{item:subadditive} of Theorem \ref{thm:liggettergodic}.
\end{proposition}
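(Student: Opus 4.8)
The plan is to read off the superadditivity directly from Lemma~\ref{lem:someequality}. The identity $X_{0,0}=0$ is immediate from \eqref{eq:Xmn} (the segment $[0,0]\times\{0\}$ meets no vertical line), or alternatively from Proposition~\ref{prop:equalind} together with $N(0,0)=0$. For the inequality, combine \eqref{eq:u0m}, \eqref{eq:umn} and \eqref{eq:u0n}:
\begin{equation*}
X_{0,n}\;\geq\;Q_1+P_1+Q_2-Q_{1,2}\;=\;X_{0,m}+X_{m,n}+\bigl(P_1-Q_{1,2}\bigr),
\end{equation*}
so the whole proposition reduces to the single inequality $P_1\geq Q_{1,2}$, and that is what the rest of the proof has to establish.

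To get $P_1\geq Q_{1,2}$ I would exhibit an injection from the set of color $\{-1,-2\}$ pairs that travel together across $[m,n]\times\{n\}$ into the set of horizontal color $-1$ lines crossing $\{m\}\times[m,n]$. Fix such a pair, sitting in a column $x=c$ just below height $n$. By Proposition~\ref{prop:colorignore} the color $-2$ lines on their own form a single-colored $t$-PNG model driven by the color $-2$ nucleations, all of which lie in $[m,n]\times[m,n]$; hence color $-2$ lines never turn, and the color $-2$ member of the pair descends straight down column $c$ to a color $-2$ nucleation $\mu=(c,y_\mu)$ with $m<y_\mu<n$. Since almost surely no two nucleations share an $x$-coordinate, the color $-1$ member of the pair cannot itself descend straight down column $c$ to a nucleation, so, tracing it backwards from the top, it must at some point arrive into column $c$ horizontally from the left; moreover, below height $y_\mu$ column $c$ is occupied by the color $-2$ nucleation $\mu$, and a vertex-by-vertex inspection of which two-colored configurations are admissible under the $\mmin$ rule of Definition~\ref{def:nLmatrix} rules out a color $-1$ line being present in column $c$ there. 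Thus the backward trace of the color $-1$ line of the pair is a descending staircase of alternating vertical and horizontal steps whose first leftward step occurs at a height in $(m,n)$. I would then argue, by the same sort of case analysis together with the fact that distinct nucleations have distinct $x$-coordinates, that this staircase cannot terminate at a color $-1$ nucleation inside $[m,n]\times[m,n]$ (there are none there) and cannot leave the square through its bottom edge, so it must leave through the left edge $\{m\}\times[m,n]$ along a horizontal color $-1$ segment; that segment is declared the image of the pair. Distinct pairs occupy distinct columns at height $n$, and two color $-1$ lines can never travel together (they would share a color), so their backward traces are disjoint and the map is injective; this gives $P_1\geq Q_{1,2}$.

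I expect the main difficulty to be exactly this last tracing argument --- showing that the color $-1$ line of each pair reaches $[m,n]\times[m,n]$ through the left edge and not through the bottom. This is the one place that genuinely uses the architecture of the colored model: color ignorance to pin the color $-2$ skeleton down as an ordinary $t$-PNG confined to $[m,n]\times[m,n]$, the genericity of the Poisson field, and the explicit catalogue of admissible two-colored vertices coming from the $\mmin$ definition of $\LL^2$. By contrast, the algebra combining \eqref{eq:u0m}--\eqref{eq:u0n} and the bookkeeping needed to make the above correspondence well defined and injective should be routine.
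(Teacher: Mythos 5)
Your proposal is correct and follows the same route as the paper's proof: reduce via Lemma~\ref{lem:someequality} to the single inequality $P_1\geq Q_{1,2}$, then invoke the catalogue of admissible two-colored vertices from Appendix~\ref{sec:twocolorfig} to argue that the color~$-1$ member of each $(-1,-2)$ vertical pair crossing $[m,n]\times\{n\}$ must have entered $[m,n]\times[m,n]$ through its left edge, giving the desired injection into the set counted by $P_1$. The one place your write-up goes astray is the remark that ``below height $y_\mu$ column $c$ is occupied by the color~$-2$ nucleation $\mu$'' and that this forbids a color~$-1$ line there --- that claim is neither true nor needed; what the deferred ``case analysis'' actually has to establish (and what the paper's terse appeal to the two-colored configurations is pointing at) is the parity/mode observation: from the admissible vertices, a color~$-1$ line can only transition between the two states $\{\text{lone vertical},\text{paired horizontal}\}$ or between the two states $\{\text{lone horizontal},\text{paired vertical}\}$, never between the two groups, so a line entering from the bottom (lone vertical) can never end up as the vertical partner of a color~$-2$ line at the top, forcing entry from the left. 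With that substituted for your confused sentence, your argument is exactly the paper's.
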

\begin{proof}
By definition, we have $X_{0, 0} = 0$. We proceed to show that $X_{0, n} \geq X_{0, m} + X_{m, n}$. By Lemma \ref{lem:someequality}, it suffices to show that $P_1 \geq Q_{1, 2}$. We restrict ourselves to the square $[m, n] \times [m, n]$. All lines entering this square from the bottom and left boundaries have color $-1$, and all Poisson nucleations inside the square have color $-2$. $P_1$ equals the number of color $-1$ lines that enter the left boundary.
We can think of $Q_{1, 2}$ as the number of color $-2$ lines that cross $[m, n] \times \{n\}$ and are erased by a color $-1$ line. By looking at the possible two color configurations in Appendix \ref{sec:twocolorfig}, we find that each color $-1$ line which erases a color $-2$ line and crosses $[m, n] \times \{n\}$ must enter $[m, n] \times [m, n]$  from the left boundary. This implies that $P_1 \geq Q_{1, 2}$ and concludes the proposition. 
\end{proof}

The following scaling property follows immediately from the corresponding scaling property of the Poisson nucleations.
\begin{lemma}[Scaling]\label{lem:scaling}
For a fixed $0 < s < \infty$, we have $$(N(x, y); x, y \geq 0) \overset{d}{=} (N(sx,y/s); x, y \geq 0).$$
\end{lemma}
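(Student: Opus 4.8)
The plan is to realize the entire height-function field $\big(N(x,y)\big)_{x,y\ge 0}$ as a measurable functional of the rate-one Poisson nucleation process $\Pi$ together with the i.i.d.\ coin flips used to resolve intersection points, and then to show that this functional is equivariant under the area-preserving shear $T_s\colon (x,y)\mapsto(sx,y/s)$. Since $\det DT_s = s\cdot s^{-1}=1$, the map $T_s$ preserves Lebesgue measure, so $T_s(\Pi)$ is again a rate-one Poisson point process and $T_s(\Pi)\overset{d}{=}\Pi$; this is precisely the ``scaling property of the Poisson nucleations'' alluded to in the statement. The claimed identity $\big(N(x,y)\big)\overset{d}{=}\big(N(sx,y/s)\big)$ will then reduce to checking that building the $t$-PNG model from $T_s(\Pi)$ produces exactly the $T_s$-image of the line ensemble built from $\Pi$.

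First I would record the geometric equivariance: $T_s$ sends horizontal rays to horizontal rays and vertical rays to vertical rays, and it is an increasing bijection in each coordinate separately, so it preserves all incidences between rays and the order in which successive crossings occur along any fixed ray. Consequently, if we use the same coin flip at an intersection point of the $\Pi$-model and at its $T_s$-image in the $T_s(\Pi)$-model, the two sampling procedures remain in lockstep, provided the orders in which intersection points are resolved are compatible.

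The one genuine point to address — and the step I expect to be the main obstacle — is that the canonical resolution order (increasing $x+y$) is \emph{not} preserved by $T_s$: after applying $T_s$ one would resolve intersection points in order of increasing $sx+y/s$. So I would prove the (standard, but necessary) fact that the law of the $t$-PNG configuration inside any down-left closed region $D$ is unchanged if the intersection points of $D$ are resolved according to \emph{any} linear extension of the coordinatewise partial order. The reason is causal and local: to sample the outcome at an intersection point $p$ one only needs the lines entering a small neighbourhood of $p$ from the left and from below, and these are already determined by the nucleations and the previously resolved intersection points strictly to the lower-left of $p$; thus the construction is an ordinary sequential (Bayes-net-style) sampling whose joint law is insensitive to the choice of topological order. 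Exhausting $\R_{>0}^2$ by the rectangles $[0,sx]\times[0,y/s]$ and resolving the intersection points of the $T_s(\Pi)$-model in the order transported from the canonical order of the $\Pi$-model (a valid linear extension), I can couple the two models so that $N^{T_s(\Pi)}(x,y)=N^{\Pi}(x/s,\,sy)$ for all $x,y\ge 0$.

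Finally, combining $T_s(\Pi)\overset{d}{=}\Pi$ with this coupling gives $\big(N(x/s,sy);\,x,y\ge 0\big)\overset{d}{=}\big(N(x,y);\,x,y\ge 0\big)$ as random fields, and the substitution $x\mapsto sx$, $y\mapsto y/s$ yields the statement. I would add the remark that the same argument, now without any area constraint, shows the configuration is equivariant under every map $(x,y)\mapsto(f(x),g(y))$ with $f,g$ increasing, the Poisson intensity merely being reweighted by $f'(x)g'(y)$; the lemma is the special case in which this reweighting is trivial.
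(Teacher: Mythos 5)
Your proof is correct and follows the same basic route as the paper, which simply observes that the area-preserving shear $T_s\colon(x,y)\mapsto(sx,y/s)$ leaves the law of the rate-one Poisson nucleation process invariant. The paper's proof is a single sentence ("follows immediately from the corresponding scaling property of the Poisson nucleations"), and you supply the one detail it glosses over — that the canonical resolution order by $x+y$ is not $T_s$-invariant, so one must first check the law is unchanged under any linear extension of the coordinatewise partial order — which is a genuine and correctly handled subtlety.
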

The following proposition partially proves Theorem \ref{hydrodynamic_limit}.
\begin{proposition}\label{prop:LLN}
Let $\gamma = \sup_{n\geq 1} \frac{\mathbb{E}[N(n, n)]}{n}$.
With probability $1$, we have 
$$\lim_{s \to \infty} \frac{N(sx, sy)}{s} = \gamma \sqrt{xy}$$ for any $x, y > 0$.
\end{proposition}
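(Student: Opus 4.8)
The plan is to combine the almost-sure limit \eqref{eq:lln} along the diagonal with the scaling symmetry of Lemma \ref{lem:scaling} and a monotonicity/sandwiching argument to upgrade the diagonal integer limit to an arbitrary direction $(x,y)$ and continuous parameter $s$. First I would establish the limit along the diagonal at integer times: by Propositions \ref{prop:equalind}, \ref{prop:item24}, and \ref{prop:item1}, the family $\{X_{m,n}\}$ satisfies all hypotheses of Theorem \ref{thm:liggettergodic}, so $X_{0,n}/n \to \gamma$ almost surely with $\gamma = \sup_{n\geq 1}\mathbb{E}[X_{0,n}]/n = \sup_{n\geq 1}\mathbb{E}[N(n,n)]/n$; since $\{X_{0,n}\} \overset{d}{=} \{N(n,n)\}$ this gives $N(n,n)/n \to \gamma$ in distribution, but for the almost-sure statement one argues directly on the step colored model, where $X_{0,n}$ literally records $N(n,n)$ after mod-$2$ erasure (Proposition \ref{prop:equalind}), so $N(n,n)/n \to \gamma$ almost surely. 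In particular $\gamma \in [0,\infty]$; the fact that $\gamma \in (0,\infty)$ is deferred to later sections (as stated in the proof-idea subsection) and is not needed here.

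Next I would pass from integer diagonal times to real diagonal times $s \to \infty$ by monotonicity. The height function $N$ is monotone nondecreasing in each coordinate (crossing more lines only increases it), so for any real $s \geq 1$ with $\lfloor s \rfloor = k$ we have $N(k,k) \le N(s,s) \le N(k+1,k+1)$, hence $\frac{k}{s}\cdot\frac{N(k,k)}{k} \le \frac{N(s,s)}{s} \le \frac{k+1}{s}\cdot\frac{N(k+1,k+1)}{k+1}$, and letting $s\to\infty$ (so $k\to\infty$, $k/s\to 1$) yields $N(s,s)/s \to \gamma$ almost surely. Then I would handle a general direction: fix $x,y>0$ and apply Lemma \ref{lem:scaling} with $\sigma = \sqrt{x/y}$, which gives $(N(u,v)) \overset{d}{=} (N(\sigma u, v/\sigma))$; taking $(u,v) = (s\sqrt{xy}, s\sqrt{xy})$ we get that $N(s\sqrt{xy},s\sqrt{xy}) \overset{d}{=} N(sx, sy)$ for each fixed $s$. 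Combined with the real-diagonal limit $N(r,r)/r \to \gamma$ a.s. applied at $r = s\sqrt{xy}$, this gives $\frac{N(sx,sy)}{s} = \sqrt{xy}\cdot\frac{N(s\sqrt{xy},s\sqrt{xy})}{s\sqrt{xy}} \to \gamma\sqrt{xy}$, but only in distribution because Lemma \ref{lem:scaling} is an equality in law of processes, not an almost-sure identity for a single realization.

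The main obstacle is precisely this last point: upgrading the convergence $\frac{N(sx,sy)}{s}\to\gamma\sqrt{xy}$ from "in distribution for each $s$" to "almost surely, simultaneously for all $x,y$". To do this I would avoid Lemma \ref{lem:scaling} and instead sandwich $N(sx,sy)$ directly between diagonal values on the \emph{same} probability space. Given $x,y>0$, choose rationals and use monotonicity in both coordinates: for any $a,b>0$ with $a \le \sqrt{xy} \le b$ chosen so that the rectangle $[0,sx]\times[0,sy]$ contains $[0,sa]\times[0,sa]$ and is contained in $[0,sb]\times[0,sb]$—which is arranged by taking $a^2 \le xy$ with $a\le \min(x,y)$ being too crude, so more carefully one rescales the \emph{axes} deterministically. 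The clean route: define $\tilde N(u,v) = N(u/\sqrt{x/y}\cdot\sqrt{x/y},\dots)$—rather, note that the Poisson process scaling that underlies Lemma \ref{lem:scaling} can be realized as a deterministic area-preserving affine change of coordinates applied to one fixed Poisson realization, so on an event of probability one the rescaled process is again a valid step colored $t$-PNG model and its height function equals $N(sx,sy)$ after the substitution; then the diagonal a.s. limit transfers. I expect the write-up to phrase this as: realize the $t$-PNG model from a single Poisson process, apply the a.s. diagonal limit to the anisotropically-rescaled copy, and conclude. Finally, since the statement is for each fixed $x,y>0$, a countable intersection over rational directions together with monotonicity in $x$ and $y$ (to interpolate to all real $x,y$) promotes it to a single almost-sure event, completing the proof of Proposition \ref{prop:LLN}.
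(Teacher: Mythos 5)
Your proposal is correct and follows the paper's proof essentially step for step: apply the superadditive ergodic theorem to the step-colored model to get the integer-diagonal limit, use monotonicity of $s\mapsto N(s,s)$ to pass to real diagonal times, invoke Lemma~\ref{lem:scaling} to rotate to a fixed direction $(x,y)$, and conclude with a countable intersection over rational directions plus monotonicity in $x,y$. The one detour is unnecessary: your concern that Lemma~\ref{lem:scaling} yields only distributional convergence does not arise if the lemma is applied at the level of \emph{full processes}---since $(N(x,y);x,y\geq 0)\overset{d}{=}(N(\sigma x,y/\sigma);x,y\geq 0)$ and the event $\{\lim_{s\to\infty}N(s,s)/s=\gamma\}$ is a measurable property of the process (using right-continuity), the a.s.\ diagonal limit transfers directly to the rescaled process, giving $\lim_{s\to\infty}N(sx,sy)/s=\gamma\sqrt{xy}$ a.s.\ for each fixed $(x,y)$; the deterministic area-preserving coupling you sketch is the mechanism underlying this equality in law but need not be invoked explicitly.
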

\begin{proof}
We first prove that almost surely, $\frac{N(s, s)}{s} \to \gamma$. 
Using Proposition \ref{prop:item24} and Proposition \ref{prop:item1}, we can now apply Theorem \ref{thm:liggettergodic} to conclude that almost surely $\frac{X_{0, n}}{n} \to \gamma$ as $n \to \infty$, where $\gamma = \sup_{n\geq 1} \frac{\mathbb{E}[X_{0, n}]}{n}$. The convergence also holds in $L^1$ if $\gamma$ is finite. Using Proposition \ref{prop:equalind}, we have that $\lim_{n \to \infty}\frac{N(n, n)}{n} = \gamma$ almost surely. Note that since $N(s, s)$ is increasing in $s$, we also have that almost surely,
\begin{equation*}
\lim_{t \to \infty} \frac{N(s, s)}{s} = \gamma.
\end{equation*}
This together with  Lemma \ref{lem:scaling} 
implies that almost surely $\lim_{s \to \infty} \frac{N(sx, sy)}{s} = \gamma \sqrt{xy}$ for arbitrary fixed $x, y > 0$. We take a probability $1$ event such that $\lim_{s \to \infty} \frac{N(sx, sy)}{s} = \gamma \sqrt{xy}$ for $x, y \in \mathbb{Q}_{>0}$. By the density of $\mathbb{Q}_{>0}$ in $\R_{>0}$ and the monotonicity of the height function, we know that on that 
event, $\lim_{s \to \infty} \frac{N(sx, sy)}{s} = \gamma\sqrt{xy}$ for all $x, y \in \R_{> 0}$. 
\end{proof}

To complete the proof of Theorem \ref{hydrodynamic_limit}, it remains to identify the constant $\gamma$. This will be done in Section \ref{sec:proving}. Before carrying out the proof, we need to first show that $\gamma$ is neither zero nor infinity. The fact that $\gamma$ is non-zero can be easily seen by the following lemma.
\begin{lemma}\label{lem:gamma2}
We have $\gamma \geq 2$. 
\end{lemma}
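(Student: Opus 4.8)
The plan is to obtain the bound $\gamma\geq 2$ from the classical shape theorem for the ordinary PNG model (the $t=0$ case) by a one-line monotonicity argument. Sample the Poisson nucleations of intensity $1$ on $\R_{>0}\times\R_{>0}$, and build the $t$-PNG model on top of them. Recall from the discussion following the definition of $\alpha$-points that $N(n,n)$ equals the maximal number of $\alpha$-points that can be collected by an up-right path from $(0,0)$ to $(n,n)$. Since the set of $\alpha$-points is, by definition, the union of the Poisson nucleations with the crossing points, it always contains the Poisson nucleations, whatever outcomes are sampled at the intersection points. Hence, replacing ``$\alpha$-points'' by ``Poisson nucleations'' in the last-passage formula can only decrease it; but that smaller quantity is exactly the height function of the ordinary PNG model built from the same nucleations, which we denote $\bar N(n,n)$. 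Therefore $N(n,n)\geq\bar N(n,n)$ almost surely, and in particular $\mathbb{E}[N(n,n)]\geq\mathbb{E}[\bar N(n,n)]$ for all $n\geq 1$.

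Dividing by $n$ and taking the supremum over $n$ gives
\[
\gamma=\sup_{n\geq 1}\frac{\mathbb{E}[N(n,n)]}{n}\ \geq\ \sup_{n\geq 1}\frac{\mathbb{E}[\bar N(n,n)]}{n}.
\]
The sequence $n\mapsto\mathbb{E}[\bar N(n,n)]$ is superadditive --- the longest up-right path from $(0,0)$ to $(m+n,m+n)$ dominates the concatenation of the longest path from $(0,0)$ to $(m,m)$ with an independent copy of the longest path from $(0,0)$ to $(n,n)$ --- so its supremum equals its Fekete limit; and by the Logan--Shepp/Vershik--Kerov theorem, equivalently the shape theorem for the PNG model \cite{Seedlings, Logan-Shepp, Vershik-Kerov}, one has $\mathbb{E}[\bar N(n,n)]/n\to 2$. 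Consequently $\sup_{n\geq 1}\mathbb{E}[\bar N(n,n)]/n=2$, and therefore $\gamma\geq 2$.

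I do not expect any genuine obstacle here: the whole argument rests on the pointwise inequality $N(n,n)\geq\bar N(n,n)$, which is just the monotonicity of a last-passage value under enlargement of the collectible point set and is immediate from the up-right-path description of both height functions. One could equivalently appeal to the superadditive ergodic theorem (Theorem~\ref{thm:liggettergodic}), applied to the $t=0$ model exactly as in Section~\ref{sec:subadditivity}, to identify $\sup_n\mathbb{E}[\bar N(n,n)]/n$ with the almost sure limit, and then cite the known value $2$ of that limit.
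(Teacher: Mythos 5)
Your proof is correct and follows essentially the same route as the paper: couple the $t$-PNG and PNG models on the same Poisson nucleations, observe that $N(n,n)\geq \bar N(n,n)$ pointwise, and invoke the known value $2$ for the PNG constant. The paper merely asserts the coupling inequality; your justification via the last-passage formula over $\alpha$-points (which always contain the nucleations) is a clean way to make that monotonicity explicit, but it is the same argument in substance.
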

\begin{proof}	
Let $N_0 (x, y)$ be the height function of the PNG model (where $t = 0$). We can couple together the $t$-PNG model and the PNG model so that they have the same Poisson nucleations. Under this coupling, we always have $N(x, y) \geq N_{0} (x, y)$. Using this together with the law of large numbers for the PNG model, 
we conclude that $\gamma \geq 2$.
\end{proof} 


\section{Stationary model and upper bound} \label{sec:upper-bound}
In this section, we prove that $\gamma$ is finite by constructing a stationary version of the $t$-PNG model and comparing it with the original $t$-PNG model. 
We remark that the existence of a stationary version of the $t$-PNG model is not surprising, since similar stationary models have been observed for the last passage models \cite{HammersleySourcesAndSinks, cator2006second, seppalainen2017variational}, polymers \cite{o2001brownian, seppalainen2012scaling, chaumont2018characterizing}, and stochastic vertex models \cite{aggarwal2018current, imamura2020stationary, lin2020kpz}. 
The results in this section can be viewed as generalizations of the results in Section \ref{sec:subadditivity} of \cite{HammersleySourcesAndSinks}.

Fix $\lambda, T_1, T_2 > 0$.
Consider the $t$-PNG model on $[0,T_1] \times [0, T_2]$ with the following boundary data: a Poisson point process of sources of intensity $\lambda$ on the bottom boundary and a Poisson point process of sinks of intensity $\frac{1}{\lambda(1-t)}$ on the left boundary. In order to study the stationarity of this model, it is helpful to understand the model as an interacting particle system. 

Given the $t$-PNG model, we can naturally obtain an interacting particle system called the 
\emph{$t$-Hammersley process} as follows: Let $(X_{\tau})_{0\leq \tau \leq T_2}$ be the configuration of particle locations in $[0, T_1]$ at time $\tau$. To avoid ambiguity, we let $X_{\tau}$ be right continuous. 
The particle locations in this interpretation are the locations $x$ such that $(x,\tau)$ belongs to a vertical line segment in the $t$-PNG model. More precisely, 
$X_{\tau}$ is a Markov chain on the state space $E$ consisting of all finite point configurations on $[0, T_1]$. We can decompose $E =  \bigsqcup_{n=0}^{\infty}E_n$ where each $E_n$ consists of particle configurations with exactly $n$ points: 
$$E_n = \{(x_1, \ldots, x_n): 0 \leq x_1 \leq \ldots \leq x_n \leq T_1\} \quad \text{when }  (n\geq 1),$$
and $E_ 0 = \{\emptyset\}$, where $\emptyset$ is the empty configuration. We give each set $E_n$ the usual 
topology so that $E$ is a locally compact space. The reason that we allow multiple points at the same location is purely technical; we want $E$ to be a Polish space if we identify the point configurations with Radon measures.

Due to our choice of boundary data, $X_0$ is a Poisson process with intensity $\lambda$. Using the infinitesimal generator of $X$, we will prove that the Markov process $X$ is stationary, meaning that the point configuration $X_{\tau}$ will remain a Poisson point process with intensity $\lambda$ for all $\tau \in [0, T_2]$.

We define the infinitesimal generator of $X$. Let us first define  
two families of operators: $(\mathcal{R}_z^i)_{i=1}^{\infty}$ and $(\mathcal{L}^i)_{i=1}^{\infty}$. For each $z \in (0, T_1)$ and $i \geq 1$ we can define the operator $\mathcal{R}^i_{z}: E \rightarrow E$ such that for $x_{m-1} < z \leq x_m$ (take $x_0 = 0$ and $x_{n+1} = \infty$), 
\begin{equation}
\mathcal{R}^i_{z} x= \begin{cases}(x_{1}, \ldots, x_{m-1}, z , x_{m} ,\ldots,\widehat{x}_{m +i-1}, \ldots, x_{n}), &  \text{if } x \in E_n, i \leq n-m+1, \\ (x_{1}, \ldots ,x_{m-1}, z , x_{m}, \ldots, x_{n}),& \text{if }x \in E_n, i > n-m+1, \end{cases}
\end{equation}
where $\widehat{x}_k$ denotes that the particle at $x_k$ is removed from the configuration. 
The operator $\mathcal{R}_z^i$ describes what happens when there is a nucleation at position $z$. It inserts a particle at $z$ and removes the particle whose position is $i$ positions to the right of $z$ if there are at least $i$ particles to the right of $z$. We can also think of $\mathcal{R}_z^i$ as sequentially shifting over the $i$ particles at positions $x_{m}, \dots, x_{m+i-1}$ to positions $z, x_m, \dots, x_{m+i-2}$. If there are not $i$ particles to shift over, then we shift over $x_m, \dots, x_n$ and insert a new particle ``from infinity" at $x_n$, as illustrated in Figure \ref{fig:particle}. 
The advantage of this viewpoint is that it maintains the ordering between the particles. 

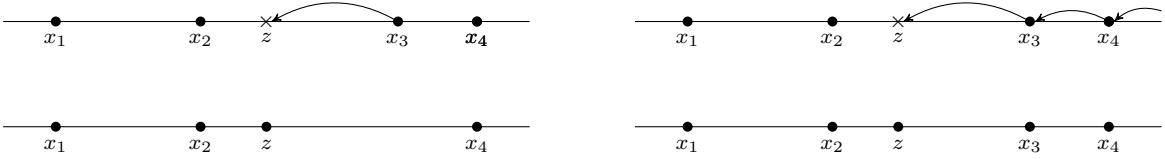
\begin{figure}[t] 
\centering
\begin{tikzpicture}[scale=0.7]
\draw (0, 0) -- (10, 0); 
\node at (5, 0) {$\times$}; 
\node[label=below:{$x_1$}, fill=black, circle, scale=0.5] at (1,0) {};
\node[label=below:{$x_2$}, fill=black, circle, scale=0.5] at (3.75,0) {};
\node[label=below:{$z$}, scale = 0.5] (1) at (5,0) {};
\node[label=below:{$x_3$}, fill=black,
circle, scale=0.5] at (7.5,0) {};
\node[label=below:{$x_4$}, fill=black, circle, scale=0.5](2)  at (9,0) {};
\node[label=below:{$x_4$}, fill=black, circle, scale=0.5](2)  at (9,0) {};
\path
(7.5, 0) edge[bend right, ->,>=stealth'] (5.1, 0);

\begin{scope}[yshift=-2cm]
\draw (0, 0) -- (10, 0); 
\node[label=below:{$x_1$}, fill=black, circle, scale=0.5] at (1,0) {};
\node[label=below:{$x_2$}, fill=black, circle, scale=0.5] at (3.75,0) {};
\node[label=below:{$z$}, fill=black, circle, scale=0.5] at (5,0) {};
\node[label=below:{$x_4$}, fill=black,
circle, scale=0.5] at (9,0) {};
\end{scope}

\begin{scope}[xshift= 12cm]
\draw (0, 0) -- (10, 0); 
\node at (5, 0) {$\times$};
\node[label=below:{$x_1$}, fill=black, circle, scale=0.5] at (1,0) {};
\node[label=below:{$x_2$}, fill=black, circle, scale=0.5] at (3.75,0) {};
\node[label=below:{$z$}, scale = 0.5] (1) at (5,0) {};
\node[label=below:{$x_3$}, fill=black,
circle, scale=0.5] at (7.5,0) {};
\node[label=below:{$x_4$}, fill=black, circle, scale=0.5](2)  at (9,0) {};
\node[fill=black, circle, scale=0.5](2)  at (9,0) {};
\path
(9, 0) edge[bend right, ->,>=stealth'] (7.6, 0);
\node[fill=black, circle, scale=0.5](2)  at (9,0) {};
\path
(7.5, 0) edge[bend right, ->,>=stealth'] (5.1, 0);
\path
(10, 0.2) edge[bend right, ->,>=stealth'] (9.1, 0);
\end{scope}

\begin{scope}[yshift=-2cm, xshift= 12cm]
\draw (0, 0) -- (10, 0); 
\node[label=below:{$x_1$}, fill=black, circle, scale=0.5] at (1,0) {};
\node[label=below:{$x_2$}, fill=black, circle, scale=0.5] at (3.75,0) {};
\node[label=below:{$z$}, fill=black,
circle, scale=0.5] at (5,0) {};
\node[label=below:{$x_3$}, fill=black,
circle, scale=0.5] at (7.5,0) {};
\node[label=below:{$x_4$}, fill=black, circle, scale=0.5](2)  at (9,0) {};

\end{scope}

\end{tikzpicture}
\caption{Example of $\mathcal{R}_z^ix$ with $n=4$ and $m=3$. On the left we take $i=1$ and on the right we take $i > 2$. Note that for $i \leq  n-m+1 = 2$, the number of particles is preserved and for $i >2$, the number of particles increases by $1$.}
\label{fig:particle}
\end{figure}

We also define $\mathcal{L}^i: E \rightarrow E$ such that 
\begin{equation}
\mathcal{L}^i x= \begin{cases}\left(x_1, \ldots, \widehat{x_i}, \ldots,  x_{n}\right), & \text { if  }x \in E_n , i \leq n, \\ (x_1,\ldots, x_n), & \text { if } x \in E_n, i > n.\end{cases}
\end{equation}
The operator $\mathcal{L}^i$ describes what happens when there is a sink on the left boundary. It removes the $i$th particle if there is one, and if not, does nothing. 

\begin{proposition}\label{generator-X}
Let $X_{\tau}$ be the $t$-Hammersley process for $t \in [0,1)$ with Poisson sources of intensity $\lambda$ and Poisson sinks of intensity $\frac{1}{\lambda(1-t)}$. Its generator $G$ is given by the following formula when acting on $f \in C_{0}(E)$:
\begin{equation}
    Gf(x) = \sum_{i=1}^{\infty}t^{i-1}(1-t)\int_0^{T_1}(f(\mathcal{R}^i_z x) - f(x))dz + \sum_{i=1}^{\infty}\frac{t^{i-1}}{\lambda}(f(\mathcal{L}^ix) - f(x)).
\end{equation}
\end{proposition}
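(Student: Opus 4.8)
The plan is to identify the generator of the $t$-Hammersley process $X_\tau$ by carefully reading off the jump rates from the construction of the $t$-PNG model with Poisson sources of intensity $\lambda$, Poisson sinks of intensity $\frac{1}{\lambda(1-t)}$, and Poisson nucleations of intensity $1$ in the bulk, and then verifying that the resulting pure-jump process is well-defined on $E$. First I would observe that, for each fixed time $\tau$, the configuration $X_\tau \in E$ consists of finitely many particles (the abscissae of the vertical line segments crossing the horizontal line at height $\tau$), so the process evolves by a locally finite set of jumps on any compact time interval: this is because the total number of Poisson nucleations, sources and sinks in $[0,T_1]\times[0,T_2]$ is a.s.\ finite, and each such point triggers at most one ``event'' of the particle system. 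Hence $X$ is a pure-jump Markov process, and it suffices to compute its infinitesimal jump rates.

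Next I would analyze what happens at a single event. When a nucleation appears at a point $(z,\tau)$ with $z\in(0,T_1)$, a new vertical line is born at abscissa $z$ (a new particle is inserted at $z$), and its rightward horizontal line propagates, colliding with the vertical lines to its right; by Definition \ref{def:Lmatrix}, at the first collision the lines cross with probability $t$ or annihilate with probability $1-t$, and on crossing the horizontal line continues to the next vertical line, etc. Thus the horizontal line emanating from $z$ crosses exactly the first $i-1$ vertical lines to the right of $z$ and annihilates the $i$-th with probability $t^{i-1}(1-t)$ (for $i\le n-m+1$), and crosses all remaining vertical lines with probability $t^{n-m+1}$; in the first case the net effect on the particle configuration is precisely the operator $\mathcal{R}^i_z$ as defined (insert at $z$, delete the particle $i$ slots to the right), and in the second case it is $\mathcal{R}^i_z$ for any $i>n-m+1$ (insert at $z$, delete nothing). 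Since nucleations arrive at rate $dz\,d\tau$ in the bulk, this yields the bulk term $\sum_{i=1}^\infty t^{i-1}(1-t)\int_0^{T_1}\big(f(\mathcal{R}^i_z x)-f(x)\big)\,dz$ — noting that $\sum_{i> n-m+1} t^{i-1}(1-t) = t^{n-m+1}$ correctly accounts for the ``insert from infinity'' outcome encoded by $\mathcal{R}^i_z x = (x_1,\dots,x_{m-1},z,x_m,\dots,x_n)$ for all $i>n-m+1$. Similarly, a sink at $(0,\tau)$ on the left boundary sends a rightward horizontal line from the axis which crosses the first $i-1$ vertical lines and annihilates the $i$-th with probability $t^{i-1}(1-t)$; annihilating the $i$-th vertical line means deleting the $i$-th particle, which is exactly $\mathcal{L}^i x$, while crossing all of them (probability $t^n$ when $x\in E_n$) leaves the configuration unchanged, i.e.\ equals $\mathcal{L}^i x$ for any $i>n$. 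Sinks arrive at rate $\frac{1}{\lambda(1-t)}\,d\tau$, so each ``annihilate the $i$-th'' outcome has rate $\frac{1}{\lambda(1-t)}\cdot t^{i-1}(1-t) = \frac{t^{i-1}}{\lambda}$, giving the boundary term $\sum_{i=1}^\infty \frac{t^{i-1}}{\lambda}\big(f(\mathcal{L}^i x)-f(x)\big)$.

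It remains to check that the formal generator $G$ above genuinely generates the process on $C_0(E)$: one must verify that for $f\in C_0(E)$ the series converge (the bulk sum is dominated by $\int_0^{T_1} 2\|f\|_\infty\,dz \sum_i t^{i-1}(1-t) = 2T_1\|f\|_\infty < \infty$ and the boundary sum by $\frac{2\|f\|_\infty}{\lambda}\sum_i t^{i-1} = \frac{2\|f\|_\infty}{\lambda(1-t)}<\infty$, both finite since $t<1$), that $Gf\in C_0(E)$, and that $X_\tau$ has no explosion, which follows since the jump rate out of any configuration in $E_n$ is bounded by $T_1 + \frac{1}{\lambda(1-t)}$ uniformly and the number of particles cannot grow too fast. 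The main obstacle I anticipate is the bookkeeping around the ``insert/delete from infinity'' boundary cases — making sure that the geometric tail of crossing probabilities ($t^{n-m+1}$ for nucleations, $t^n$ for sinks) matches exactly the way $\mathcal{R}^i_z$ and $\mathcal{L}^i$ are defined to act trivially for large $i$, so that the clean single sum over all $i\ge 1$ in the generator formula is correct; everything else is a routine translation between the graphical $t$-PNG picture and the particle dynamics together with standard facts about pure-jump Markov processes with bounded rates.
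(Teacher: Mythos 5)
Your proposal is correct and takes essentially the same approach as the paper, which gives only a brief sketch of this argument (read off the jump rates: $i$ geometric with parameter $1-t$ for both nucleation and sink events, $z$ uniform on $[0,T_1]$ for nucleations, sinks arriving at rate $\frac{1}{\lambda(1-t)}$) and defers the details to \cite{HammersleySourcesAndSinks}. The extra bookkeeping you supply --- matching the geometric tails $t^{n-m+1}$ and $t^n$ to the trivial action of $\mathcal{R}^i_z$ and $\mathcal{L}^i$ for large $i$, and verifying the series converge with bounded total jump rate --- is exactly the ``direct computation'' the paper leaves implicit.
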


\begin{proof}
The first term on the right comes from moving the particle configuration from $x$ to $\mathcal{R}_z^i x$, where $i$ is sampled from a geometric distribution and $z$ is chosen according to the uniform measure on $[0, T_1]$. The second term comes from moving the particle configuration from $x$ to $\mathcal{L}^i x$ for some $i$ ($i$ is again sampled from a geometric distribution) with rate $\frac{1}{\lambda(1-t)}$.  
A rigorous proof follows from a direct computation as in \cite{HammersleySourcesAndSinks}.
\end{proof}

Let $G^*$ be the dual of $G$ with respect to $\mu$. In other words, the operator satisfying
\begin{equation}
    \int_E Gf(x)g(x)\mu(dx) = \int_Ef(y)G^*g(y)\mu(dy) \ \ \text{for all $f,g \in C_0(E)$.}
\end{equation}

To compute $G^*$, we will need to define two additional sets of operators.
These operators are similar to the ones above except that they move particles from left to right instead of right to left. 
For each $s \in (0, T_1)$ and $i \geq 1$ we can define the operator $\mathcal{L}^i_{s}: E \rightarrow E$ such that for $x_{m-1} < s \leq x_m$,
\begin{equation}
 \mathcal{L}^i_{s} x= \begin{cases}\left(x_{1}, \ldots, \widehat{x}_{m -i}, \ldots, x_{m-1},s, x_{m} ,\ldots, x_{n}\right), &  
 \text{if }x \in E_n, i \leq m-1,
 \\ (x_{1}, \ldots ,x_{m-1},s, x_{m}, \ldots, x_{n}),& 
 \text{if }x \in E_n, i > m-1.
 \end{cases}
\end{equation}
This operator inserts a particle at position $s$ and removes the particle whose position is $i$ positions to the left of $s$ if there are at least $i$ particles to the left of $s$.

We also define $\mathcal{R}^i: E \rightarrow E$ such that 
\begin{equation}
\mathcal{R}^i x= \begin{cases}\left(x_1, \ldots, \widehat{x}_{n-i+1}, \ldots,  x_{n}\right), & \text { if  }x \in E_n , i \leq n, \\ (x_1,\ldots, x_n), & \text { if } x \in E_n, i > n.\end{cases}
\end{equation}
This operator removes the $(n-i+1)$th 
particle if there is one, and if not, does nothing. 
\begin{lemma}
For all $g \in C_{0}(E)$,
\begin{equation}
G^*g(y) = \sum_{i=1}^{\infty}t^{i-1}(1-t)\int_0^{T_1}(g(\mathcal{L}^i_sy) - g(x))ds + \sum_{i=1}^{\infty}\frac{t^{i-1}}{\lambda}(g(\mathcal{R}^iy) - g(y)).
\end{equation}
\end{lemma}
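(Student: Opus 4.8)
The plan is to verify the defining relation $\int_E Gf(x)\,g(x)\,\mu(dx)=\int_E f(y)\,G^{*}g(y)\,\mu(dy)$ for all $f,g\in C_0(E)$ directly, using that $\mu$ is the law of a rate-$\lambda$ Poisson point process on $[0,T_1]$. Two facts about $\mu$ will be used repeatedly: the explicit description $\int_E F\,d\mu=\sum_{n\ge0}\tfrac{e^{-\lambda T_1}\lambda^{n}}{n!}\int_{[0,T_1]^n}F\,dx$ (the points being i.i.d.\ uniform given their number), and the Mecke identity $\int_E\sum_{z\in x}h(z,x\setminus\{z\})\,\mu(dx)=\lambda\int_0^{T_1}\int_E h(z,x)\,\mu(dx)\,dz$, which is precisely the tool that interchanges a bulk Lebesgue integral against $\mu$ with a sum over the points of a configuration, and which is the source of the asymmetric factors $\lambda^{\pm1}$ between the bulk and boundary terms of $G$ and $G^{*}$.

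It is convenient to split each of $G,G^{*}$ into its ``diagonal'' part ($-f(x)$, resp.\ $-g(y)$, times the total rate $T_1+\tfrac1{\lambda(1-t)}$) and its transition part; the diagonal parts pair off immediately since $\sum_{i\ge1}t^{i-1}(1-t)=1$ and $\sum_{i\ge1}\tfrac{t^{i-1}}{\lambda}=\tfrac1{\lambda(1-t)}$. For the transition parts one classifies the elementary moves by how they change the number of particles: $\mathcal R^i_z$ is particle-number-preserving when there are at least $i$ particles to the right of $z$, and otherwise merely inserts $z$ (number increases by one); likewise $\mathcal L^i_s$ either preserves the number or inserts $s$; while $\mathcal L^i$ and $\mathcal R^i$ decrease the number by one (or do nothing when there are fewer than $i$ particles). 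Since taking the $\mu$-adjoint reverses the sign of the particle-number change, the matching is threefold: (a) the number-preserving part of $\sum_i t^{i-1}(1-t)\int_0^{T_1}f(\mathcal R^i_z x)\,dz$ must match the number-preserving part of $\sum_i t^{i-1}(1-t)\int_0^{T_1}g(\mathcal L^i_s y)\,ds$; (b) the number-increasing part of the $\mathcal R^i_z$-term of $Gf$ must match $\sum_j\tfrac{t^{j-1}}{\lambda}g(\mathcal R^j y)$; and (c) the boundary term $\sum_i\tfrac{t^{i-1}}{\lambda}f(\mathcal L^i x)$ of $Gf$ must match the number-increasing part of the $\mathcal L^i_s$-term of $G^{*}g$.

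For (a) the key algebraic fact is the operator identity $x=\mathcal L^{i}_{w}\!\big(\mathcal R^{i}_{z}x\big)$ in the number-preserving regime, where $w$ is the particle of $x$ deleted in forming $\mathcal R^i_z x$ (and symmetrically $y=\mathcal R^{i}_{z}\!\big(\mathcal L^{i}_{w}y\big)$ in the number-preserving regime of $\mathcal L^i_w$); this, together with Mecke's identity applied once on each side to trade the free variables $z$ and $s$ for points of the configurations, gives the equality of the two number-preserving pieces term by term in $i$. For (b) and (c) one uses Mecke to convert $\mathcal R^i_z x=x\cup\{z\}$ (resp.\ $\mathcal L^i_s y=y\cup\{s\}$) back into a deletion, and then resums the weights: the geometric tail identity $\sum_{i\ge k}t^{i-1}(1-t)=t^{k-1}$ is exactly what turns the ``insert near the right edge at level $i$'' contributions into the weights $\tfrac{t^{j-1}}{\lambda}$ attached to $\mathcal R^{j}$, and symmetrically on the left; the leftover ``do nothing'' contributions (when $i$ exceeds the particle count) recombine with the diagonal terms. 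Summing the matched pieces over $i$ then produces exactly the stated formula for $G^{*}$.

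The one genuinely delicate point is this bookkeeping: correctly splitting the $z$-integral over the inter-particle gaps, tracking how the index $i$, the insertion site, and the deleted particle transform under $\mathcal R^i_z\leftrightarrow\mathcal L^i_s$, and checking that the geometric weights redistribute consistently across the number-preserving and number-changing regimes and across the two boundaries. Conceptually nothing new happens beyond the $t=0$ computation of Section~3 of \cite{HammersleySourcesAndSinks}; the only new ingredient is the presence of the geometric sums in $i$, which are handled uniformly via the tail identity above. As an immediate consistency check, the resulting formula gives $G^{*}\mathbf 1\equiv0$, which by duality re-proves that $\mu$ is invariant for the $t$-Hammersley process $X$ — the statement used throughout the rest of this section.
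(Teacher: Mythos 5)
Your plan is correct and is, modulo notation, the paper's proof: both split off the constant (diagonal) part, classify transitions by particle-number change, perform the explicit change of variables that your identity $x=\mathcal{L}^i_w(\mathcal{R}^i_z x)$ encodes, and resum via $\sum_{i\geq k}t^{i-1}(1-t)=t^{k-1}$. The paper works directly with the densities $\mu_n(dx)=\lambda^n e^{-\lambda T_1}dx$ on each $E_n$ rather than invoking the Mecke identity, and keeps the trivial ($i$ exceeding the particle count) contributions as a fourth case paired against each other rather than folding them into the diagonal term, but these are cosmetic differences.
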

\begin{proof}
The proof is similar to \cite{HammersleySourcesAndSinks}; however, since the dynamics of the $t$-Hammersley process are more intricate for $t > 0$, the computations are more complicated. We record the details in Lemma \ref{lem:dualGen} in Appendix \ref{sec:generators}.
\end{proof}
We can see that $G^*1 = 0$. This proves that $\mu$ is stationary for the Markov process $X$. Using the particle system interpretation, we 
now prove Burke's theorem for the $t$-PNG model. 


\begin{proof}[Proof of Theorem \ref{thm:burke}]
The argument intrinsically follows Theorem 3.1 in \cite{HammersleySourcesAndSinks}. We define the time-reversed Markov process $(\widetilde{X}_\tau)_{0 \leq \tau \leq T_2}$ by $$\widetilde{X}_\tau = \lim_{\tau' \downarrow \tau} X_{T_2 - \tau'}.$$ We define this process using left limits to make sure that it is c\`adl\`ag. 
Since $G^*$ and $G$ are dual with respect to the stationary measure $\mu$, it is standard that 
$G^*$ is in fact the generator of the time-reversed process $\widetilde{X}_s$. 

However, by a similar argument to Proposition \ref{generator-X}, we see that $G^*$ is also the generator of the process $X_V$ obtained by a vertical reflection of all the space-time paths of $L_{\lambda}$ across the vertical line segment $\{\frac12 T_1\} \times [0, T_2]$. Particles in $X$ jump from right to left, while in $X_V$ we reverse the direction and particles jump from left to right. 

The time-reversed process $\widetilde{X}$ can be obtained from the original space-time paths of $L_{\lambda}$ by performing a horizontal reflection across the horizontal line segment $[0,T_1] \times \{\frac12 T_2\}.$ Accordingly, we can rename this process $X_H = \widetilde{X}$. The two reflected processes $X_V$ and $X_H$ share the same generator $G^*$. In addition, both processes start with Poisson initial data with intensity $\lambda$, so we can conclude that the two processes are equal in distribution.

\begin{figure}[t] 
\centering
\begin{tikzpicture}[scale=0.6]
\draw (0, 0) -- (6, 0); 
\draw (0, 0) -- (0, 6);
\node at (3, 3) {$\times$};
\node at (2, 2) {$\times$};
\draw (0, 6) -- (6, 6) -- (6, 0);
\draw [thick](0,5) -- (1, 5) -- (2, 5) -- (2, 2) -- (4, 2) --(4,0);
\draw[thick] (2, 6) -- (2, 5) -- (3, 5) -- (3, 3) -- (6, 3);
\draw[thick] (5, 0) -- (5, 6);
\draw[fill, red] (2, 5) circle (0.1);
\draw[fill, red] (5, 3) circle (0.1);
\draw[fill, blue] (3,5) circle (0.1);
\draw[fill, blue] (4,2) circle (0.1);

\begin{scope}[xshift = 14cm, xscale=-1]
\draw (0, 0) -- (6, 0); 
\draw (0, 0) -- (0, 6);
\node at (3, 3) {$\times$};
\node at (2, 2) {$\times$};
\draw (0, 6) -- (6, 6) -- (6, 0);
\draw [thick](0,5) -- (1, 5) -- (2, 5) -- (2, 2) -- (4, 2) --(4,0);
\draw[thick] (2, 6) -- (2, 5) -- (3, 5) -- (3, 3) -- (6, 3);
\draw[thick] (5, 0) -- (5, 6);
\draw[fill, red] (2, 5) circle (0.1);
\draw[fill, red] (5, 3) circle (0.1);
\draw[fill, blue] (3,5) circle (0.1);
\draw[fill, blue] (4,2) circle (0.1);
\end{scope}

\begin{scope}[xshift = 16cm, yshift=6cm, yscale=-1]
\draw (0, 0) -- (6, 0); 
\draw (0, 0) -- (0, 6);
\node at (3, 3) {$\times$};
\node at (2, 2) {$\times$};
\draw (0, 6) -- (6, 6) -- (6, 0);
\draw [thick](0,5) -- (1, 5) -- (2, 5) -- (2, 2) -- (4, 2) --(4,0);
\draw[thick] (2, 6) -- (2, 5) -- (3, 5) -- (3, 3) -- (6, 3);
\draw[thick] (5, 0) -- (5, 6);
\draw[fill, red] (2, 5) circle (0.1);
\draw[fill, red] (5, 3) circle (0.1);
\draw[fill, blue] (3,5) circle (0.1);
\draw[fill, blue] (4,2) circle (0.1);
\end{scope}

\end{tikzpicture}
\caption{From left to right these are the processes $X$, $X_V$ and $X_H = \widetilde{X}$. The latter two figures are obtained from the first by a vertical and horizontal reflection, respectively. The second and third pictures are equal in distribution, hence the blue circles (corner points of $X$) have the same distribution as the $\times$'s (nucleations of $X$).}
\label{fig:Burke}
\end{figure}
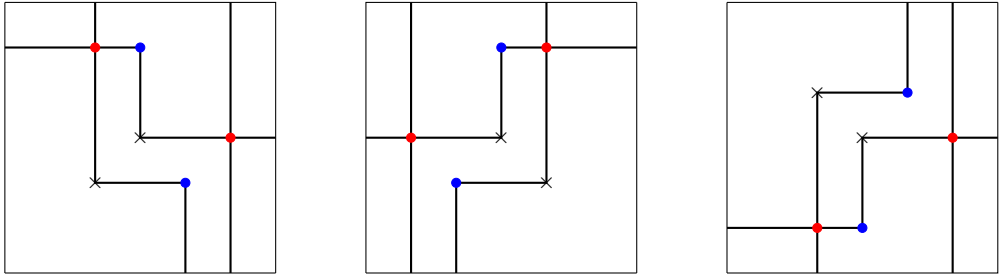

In $X_V$, particles are inserted at the vertical reflection of the Poisson nucleations of the original process. In $X_H$, particles are inserted at the horizontal reflection of the corner points of the original process (see Figure \ref{fig:Burke}). These two sets must be equal in distribution and since the Poisson process is invariant under reflections, this shows that the distribution of the corner points of $L_{\lambda}$ is also a Poisson point process with intensity $1$. 

In the process $X_V$, paths exit on the right side according to a Poisson process with intensity $\frac{1}{\lambda(1-t)}$. In $X_H$, paths exit on the right side according to the horizontal reflection of $L_{\lambda}^{\text{in}}.$ Hence $L_{\lambda}^{\text{in}}$ is a Poisson process with intensity $\frac{1}{\lambda(1-t)}.$ Similarly, for $X_V$, paths enter from the bottom with the vertical reflection of a Poisson process of intensity $\lambda$ and this is equal in distribution to the paths entering $X_H$, but those are just $L_{\lambda}^{\text{out}}$. the independence of $L_{\lambda}^{\text{in}}, L_{\lambda}^{\text{out}}$ and $L_{\lambda}^{\text{corner}}$ follows from the fact that these three processes are independent for $X_V$ since they are equal to the sources, sinks and nucleations of $L_{\lambda}.$
\end{proof}
Using the stationarity of $X$, we can now compute the upper bound for $\gamma$. Note that we only use stationarity and do not need Burke's theorem. We will compare the $t$-PNG model with empty boundary data to the stationary model $L_{\lambda}$ defined above. To this end, we 
need the following lemma showing that the $t$-PNG model is attractive as an interacting particle system (see \cite{liggett2012interacting}). It follows that the height function of the $t$-PNG model is monotone under adding sources and sinks.

\begin{lemma}[Attractivity] \label{lem:attractivity} 

We have 
\begin{enumerate}[leftmargin = 2em, label = (\roman*)] 
\item Let $\eta_{\tau}$ and $\zeta_{\tau}$ denote two $t$-Hammersley processes on the rectangle $[0, T_1] \times [0, T_2]$ with sinks given by $\xi$ and sources given by $\eta_0 \subseteq \zeta_0$. Then there exists a coupling of the two
processes $\eta_{\tau}$ and $\zeta_{\tau}$ such that 
$\eta_{\tau} \subseteq \zeta_{\tau}$ 
for all $\tau \in [0, T_2]$.
\item Let $\eta_{\tau}$ and $\tilde{\eta}_{\tau}$ denote two $t$-Hammersley processes on the rectangle $[0, T_1] \times [0, T_2]$ with sinks given by $\xi \subseteq \widetilde{\xi}$, respectively, and sources given by $\eta_0$. Then there exists a coupling of the $t$-PNG 
processes $\eta_{\tau}$ and $\widetilde{\eta}_{\tau}$ such that for all $\tau \in [0, T_2]$, the number of particles in $\tilde{\eta}_{\tau}$ must be at least the number of particles in $\eta_{\tau}$ minus the number of sinks in $\tilde{\xi} \setminus \xi$ added up until time $\tau$.  
\item We conclude that if we add either sources or sinks to the $t$-PNG model then there is a coupling so that the height function weakly increases.

\end{enumerate}
 
\end{lemma}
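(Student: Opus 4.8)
The plan is to exhibit an explicit Markovian (``basic'') coupling of the two $t$-Hammersley processes and to check that it preserves the relevant monotonicity at every jump. The delicate point is that the obvious coupling, using the same nucleation positions and the same geometric index $i$ for both processes, does \emph{not} preserve an inclusion such as $\eta_\tau\subseteq\zeta_\tau$: the extra particles of the larger configuration shift the ranks of the common ones, and one must instead couple the geometric variables \emph{particle by particle}. So I would work with a generic pair $Y\supseteq Y'$ of (multiset) point configurations — in part (i) with $(Y,Y')=(\zeta,\eta)$, in part (ii) with $(Y,Y')=(\eta,\widetilde\eta)$ — and drive both processes by one common Poisson process of nucleation events on $[0,T_1]\times[0,T_2]$ and one common Poisson process of sink events on $[0,T_2]$. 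To each nucleation event at $(z,\tau)$ I attach an i.i.d.\ sequence of Bernoulli$(t)$ crossing coins indexed by the particles of $Y$ lying to the right of $z$, ordered from left to right; similarly for sinks, with particles ordered from the left. The process $Y$ then evolves by $\mathcal{R}^i_z$ (resp.\ $\mathcal{L}^i$) with $i$ the index of the first tail; the process $Y'$, whose relevant particles form a sub-sequence of those of $Y$ (the nesting invariant), evolves by $\mathcal{R}^{i'}_z$ (resp.\ $\mathcal{L}^{i'}$) where $i'$ is the rank, among $Y'$'s relevant particles, of the first whose coin is a tail. Since $Y'$ then consumes a sub-collection of i.i.d.\ coins, $i'$ is geometric, so both marginals are the correct $t$-Hammersley dynamics. (Coincidences of positions do not occur in the applications and are in any case handled by routine multiset bookkeeping.)

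Next I would verify part (i), together with the inclusion needed for (ii), by showing $Y'_\tau\subseteq Y_\tau$ is preserved at each jump. Suppose $Y'\subseteq Y$ and a nucleation fires at $z$; let $w$ be the particle of $Y$ that its wave annihilates (if any). If $w\in Y'$, then $w$ carries the first tail coin among the particles of $Y$ to the right of $z$, hence \emph{a fortiori} the first tail among those of $Y'$, so the $Y'$-wave annihilates the same $w$; both processes also insert $z$, and the inclusion persists. If $w\notin Y'$, every $Y$-particle to the left of $w$ has a head coin, so the $Y'$-wave annihilates a particle strictly to the right of $w$ (or inserts only $z$): thus $Y$ loses a particle absent from $Y'$ while $Y'$ loses one present in $Y$, and again $Y'_{\mathrm{new}}\subseteq Y_{\mathrm{new}}$. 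The argument for a sink is identical. Applying this with $(Y,Y')=(\zeta,\eta)$, starting from $\eta_0\subseteq\zeta_0$, gives (i). For (ii) I take $(Y,Y')=(\eta,\widetilde\eta)$, drive the common sinks $\xi$ as above, and apply each sink of $\widetilde\xi\setminus\xi$ to $\widetilde\eta$ alone; since such an extra sink only removes a particle from $\widetilde\eta$, the inclusion $\widetilde\eta_\tau\subseteq\eta_\tau$ is preserved as well.

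Then I would control the particle-count deficit $D_\tau:=\#\eta_\tau-\#\widetilde\eta_\tau\ge 0$. At a common nucleation, $\eta$ gains a particle precisely when every coin consumed by its particles to the right of $z$ is a head; since $\widetilde\eta$'s particles to the right of $z$ consume a sub-collection of these coins, $\widetilde\eta$ then also gains a particle, so $D_\tau$ does not increase. At a common sink, $\widetilde\eta$ loses a particle only if some coin at one of its particles is a tail, in which case that same coin witnesses a tail among the coins at $\eta$'s particles, so $\eta$ loses one too; again $D_\tau$ does not increase. Each sink of $\widetilde\xi\setminus\xi$ leaves $\#\eta_\tau$ unchanged and changes $\#\widetilde\eta_\tau$ by $0$ or $-1$, hence raises $D_\tau$ by at most $1$. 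Therefore $D_\tau$ is bounded by the number of sinks of $\widetilde\xi\setminus\xi$ that have acted by time $\tau$, which is exactly the assertion of (ii).

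Finally, for (iii), I would record that for each fixed height $y$ one has $N(x,y)=N(0,y)+|\eta_y\cap(0,x]|$, where $\eta_y$ is the particle configuration at height $y$ and $N(0,y)$ — the number of sinks below $y$ — depends only on the sinks (this is the ``conservation of lines'' already used in Lemma~\ref{lem:lineconserve}). Adding a source leaves the sinks untouched, so part (i) with $\zeta_0=\eta_0\cup\{a\}$ gives $\eta_y\subseteq\zeta_y$ and $N^\eta(0,y)=N^\zeta(0,y)$, whence $N^\eta\le N^\zeta$ everywhere. Adding a single sink at height $b$ raises $N(0,y)$ by $1$ for $y>b$ and by $0$ for $y\le b$; part (ii), together with the inclusion $\widetilde\eta_y\subseteq\eta_y$ and the bound $D_y\le 1$, gives $|\widetilde\eta_y\cap(0,x]|\ge|\eta_y\cap(0,x]|-1$ for $y>b$, hence $N^{\widetilde\eta}(x,y)\ge N^\eta(x,y)$, while for $y\le b$ nothing changes. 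Iterating over a finite family of added sources and sinks yields (iii). I expect the main obstacle to be the design of the coupling itself — recognizing that the geometric ``how many particles does the wave pass through'' must be coupled at the level of individual particles rather than by a shared index — after which the jump-by-jump bookkeeping of the two invariants (nesting of configurations and the count deficit) is routine.
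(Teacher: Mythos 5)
Your proof is correct, but it proceeds along a genuinely different route from the paper's. The paper couples the two processes by sampling a single two-colored $t$-PNG model --- the original data gets color $1$, the extra sources (resp.\ sinks) get color $2$ --- and projecting it two ways: ignoring color $2$ gives $\eta$, and mod-$2$ erasure of both colors gives $\zeta$ (resp.\ $\widetilde\eta$). The monotonicity is then read off directly from the list of admissible two-colored vertex configurations in Appendix~\ref{sec:twocolorfig}: a color-$2$ source line can only annihilate color-$1$ horizontal lines and never a vertical one, and each color-$2$ sink line deletes at most one vertical line. You instead construct an explicit particle-by-particle basic coupling of the $t$-Hammersley generators using per-particle Bernoulli$(t)$ crossing coins, with the wave halting at the first tail, and you verify the nesting invariant $Y'\subseteq Y$ and the deficit bound jump by jump. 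The point you correctly flag as delicate --- that the geometric index must be coupled per particle rather than by sharing a single index --- is exactly what the matrix $\LL^2$ encodes implicitly, since the color-$2$ lines trace the discrepancy (second-class) particles between the two projections. Your argument is self-contained and makes the mechanism of attractivity transparent without invoking the colored machinery of Section~\ref{sec:multicolor}; the paper's proof is shorter because it reuses that machinery, which is in any case the paper's centerpiece. The derivation of part~(iii) from the identity $N(x,y)=N(0,y)+|\eta_y\cap(0,x]|$ is essentially the same in both.
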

\begin{proof}

\begin{enumerate}[leftmargin = 2em, label = (\roman*)]
\item \label{add-sources} We color all the sources in $\eta_0$ with color $1$ and color the sources in $\zeta_0 \setminus \eta_0$ with color $2$. We sample $\eta_{\tau}$ by ignoring the second color and sample $\zeta_{\tau}$ by performing the mod 2 erasure procedure on both colors. Note that the sources of color $2$ can only erase lines in the horizontal direction and therefore cannot erase any vertical lines which represent particles in $\eta_{\tau}$ (see the possible two-colored configurations in Appendix \ref{sec:twocolorfig}). Therefore $\eta_{\tau} \subseteq \zeta_{\tau}$.

\item \label{add-sinks} Similarly, we color all the sinks in $\xi$ with color $1$ and color all the sinks in $\tilde{\xi} \setminus \xi$ with color $2$. We sample $\eta_{\tau}$ by ignoring the second color and sample $\tilde{\eta}_{\tau}$ by performing the mod 2 erasure procedure on both colors. Each sink of color $2$ can delete at most one vertical line, hence the number of particles decreases by at most the number of sinks in $\tilde{\xi} \setminus \xi$ added up until time $\tau$. 

\item Note that 
$$N(x, y) = \text{number of sinks in } \{0\} \times [0,y] + \text{number of particles in } [0, x] \times \{y\}.$$ It follows from \ref{add-sources} and \ref{add-sinks} that the height function does not decrease if we add sources and sinks using the above couplings. 

\end{enumerate}

\end{proof}

\begin{lemma}[Upper Bound]
We have 
\begin{equation}\label{upper-bound}
\gamma \leq \frac{2}{\sqrt{1-t}}. 
\end{equation}
\end{lemma}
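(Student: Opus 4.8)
The plan is to bound $\mathbb{E}[N(n,n)]$ for the $t$-PNG model with empty boundary data by comparison with the stationary model $L_\lambda$ constructed above, and then to optimize over the source intensity $\lambda$.

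First I would invoke the attractivity result, Lemma~\ref{lem:attractivity}(iii): adding sources and sinks to the $t$-PNG model on $[0,T_1]\times[0,T_2]$ can only increase the height function under a suitable coupling. Coupling the empty-boundary model with $L_\lambda$ in this way gives the pointwise inequality $N(T_1,T_2)\le N_{L_\lambda}(T_1,T_2)$, and hence
\[
\mathbb{E}[N(T_1,T_2)] \le \mathbb{E}\big[N_{L_\lambda}(T_1,T_2)\big].
\]
Next I would compute the right-hand side exactly, using the identity recorded in the proof of Lemma~\ref{lem:attractivity},
\[
N_{L_\lambda}(T_1,T_2) = \#\{\text{sinks in }\{0\}\times[0,T_2]\} + \#\{\text{particles of }X_{T_2}\text{ in }[0,T_1]\},
\]
together with the stationarity of the $t$-Hammersley process established above via $G^*1 = 0$: since $X_0$ is a Poisson process of intensity $\lambda$, so is $X_{T_2}$, and the sinks form an independent Poisson process of intensity $\frac{1}{\lambda(1-t)}$ on a segment of length $T_2$. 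Taking expectations,
\[
\mathbb{E}\big[N_{L_\lambda}(T_1,T_2)\big] = \frac{T_2}{\lambda(1-t)} + \lambda T_1.
\]

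Then, specializing to $T_1 = T_2 = n$ and dividing by $n$, I obtain $\frac{\mathbb{E}[N(n,n)]}{n} \le \lambda + \frac{1}{\lambda(1-t)}$ for every $n\ge 1$ and every $\lambda > 0$; taking the supremum over $n$ gives $\gamma \le \lambda + \frac{1}{\lambda(1-t)}$ for all $\lambda>0$. Finally, minimizing the right-hand side over $\lambda > 0$ — the optimum occurs at $\lambda = (1-t)^{-1/2}$ — yields the claimed bound $\gamma \le \frac{2}{\sqrt{1-t}}$.

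Since the three structural inputs are already available — attractivity (Lemma~\ref{lem:attractivity}), the stationarity of $L_\lambda$, and the decomposition of the height function into sink count plus particle count — there is no serious obstacle here. The only points requiring care are ensuring that the attractivity coupling is a genuine coupling so that the pointwise inequality passes to expectations, and noting that the sink intensity $\frac{1}{\lambda(1-t)}$ is precisely what makes the two terms $\lambda T_1$ and $\frac{T_2}{\lambda(1-t)}$ balance under the optimization over $\lambda$.
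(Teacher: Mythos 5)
Your proposal is correct and follows essentially the same route as the paper: couple with the stationary model $L_\lambda$ via attractivity, compute $\mathbb{E}[N_\lambda^{\text{stat}}]$ by decomposing into the sink count on $\{0\}\times[0,T_2]$ plus the particle count on $[0,T_1]\times\{T_2\}$ using stationarity, and optimize over $\lambda$. The only cosmetic difference is that the paper keeps $(x,y)$ general and picks $\lambda=\sqrt{y}/\sqrt{x(1-t)}$, while you specialize to the diagonal $(n,n)$ first and optimize at $\lambda=(1-t)^{-1/2}$; the two are equivalent.
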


\begin{proof}
Consider the $t$-PNG model without boundary data as well as the stationary $t$-PNG model $L_\lambda$ defined above. Let $N_\lambda^{\text{stat}}$ be the height function of the stationary model. 
We have
\begin{align*}
    \E[N(x,y)] &\leq \E[N^{\text{stat}}_{\lambda}(x,y)] 
    =  \E[N^{\text{stat}}_{\lambda}(0, y)] +  \E[N^{\text{stat}}_{\lambda}(x,y)-N^{\text{stat}}_{\lambda}(0, y)] 
    = \frac{y}{\lambda(1-t)} + \lambda x.
\end{align*}
The first inequality follows from Lemma \ref{lem:attractivity}. The third equality follows because $N^{\text{stat}}_{\lambda}(0,y)$ is just the number of points in a Poisson point process of intensity $\frac{1}{\lambda(1-t)}$ in an interval of length $x$, and $N^{\text{stat}}_{\lambda}(x,y)-N^{\text{stat}}_{\lambda}(0, y)$ is the number of points in a Poisson process of intensity $\lambda$ in an interval of length $y$ (due to the stationary of $X$). 
 Taking $\lambda = \frac{\sqrt{y}}{\sqrt{x(1-t)}}$ (which minimizes the right-hand side above), we conclude that 
\begin{equation}
\frac{ \E[N(x, y)] }{\sqrt{xy}}\leq \frac{2}{\sqrt{1-t}}.
\end{equation}
Recall that $\gamma = \sup_{n \geq 1} \frac{\mathbb{E}[N(n, n)]}{n}$. 
The above inequality implies that $\gamma \leq \frac{2}{\sqrt{1-t}}$.
\end{proof}


\section{Proving $\gamma = \frac{2}{\sqrt{1-t}}$} \label{sec:lower-bound}
\label{sec:proving}
In this section, we prove that $\gamma = \frac{2}{\sqrt{1-t}}.$ Recall that when two lines intersect, the intersection point is called a crossing point if the two lines cross and a corner point if the two lines annihilate. The set of $\alpha$-points is the union of the Poisson nucleations and crossing points, and the set of $\beta$-points is the union of corner points and crossing points.
For the $t$-PNG model with empty boundary data, let $\A_1(x, y)$ be the number
of $\alpha$-points and $\A_2 (x, y)$ be the number of $\beta$-points in the rectangle $[0, x]\times [0,y]$. As an abbreviation, we use $\A_1 (s)$ to denote $\A_1 (s, s)$.
\subsection{Law of large numbers for the $\alpha$-points}
The main result in this subsection is the following law of large numbers:
\begin{proposition}\label{prop:alphaptlim}
We have $\frac{1}{s^2} \A_1(s) \overset{p}{\to} \frac{1}{1-t}$, as $s \to \infty$.
\end{proposition}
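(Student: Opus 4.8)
The plan is to exploit the fact that every $\alpha$-point lies on a unique horizontal line segment emanating rightward from a Poisson nucleation: a nucleation lies on its own ray, while a crossing point lies on the ray obtained by following its horizontal line leftward, which must terminate at a nucleation (a horizontal line cannot emerge from a corner, where it annihilates, nor from anywhere else). For a nucleation $p$ let $M_p(s)$ denote $1$ plus the number of crossing points on the ray from $p$ whose $x$-coordinate is at most $s$. Writing $\Pi$ for the nucleation process, this gives the exact decomposition
\begin{equation*}
\A_1(s) = \sum_{p \in \Pi \cap [0,s]^2} M_p(s).
\end{equation*}
First I would compute $\E[\A_1(s)]$ from this identity via the Mecke formula, which reduces everything to understanding the law of $M_p(s)$ for a typical $p$.

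The core estimate is that $M_p(s)$ is asymptotically $\mathrm{Geometric}(1-t)$ with mean $\tfrac{1}{1-t}$. For this, use the standard construction in which an independent $\mathrm{Uniform}[0,1]$ variable $U_v$ is attached to each potential intersection point $v$, points are processed in an order refining the lower-left partial order, and $v$ becomes a crossing iff it is reached and $U_v < t$ (the final configuration does not depend on the choice of refinement). The successive intersections $v_1,v_2,\dots$ on the ray from $p$ occur at strictly increasing $x$-coordinate, hence are processed in that order, and the event that the ray reaches $v_{k-1}$ within $[0,s]^2$ is measurable with respect to $\Pi$ and the $U$'s processed strictly before $v_{k-1}$, hence independent of $U_{v_{k-1}}$. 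Peeling these off one at a time yields $P(M_p(s) \ge k) = t\, P(v_{k-1}\ \text{reached within}\ x \le s) \le t^{\,k-1}$ for \emph{all} $p,s$, so in particular $\E[M_p(s)] \le \tfrac{1}{1-t}$ always. Conversely, for fixed $p$ the ray almost surely keeps meeting vertical lines until it corners: by Proposition~\ref{prop:LLN} the number of vertical lines crossing a fixed height $b$ in $[0,x]$ equals $N(x,b) \sim \gamma\sqrt{xb} \to \infty$, so they recur off to infinity. Hence $P(M_p(s) \ge k) \uparrow t^{k-1}$, and dominated convergence gives $\E[M_p(s)] \to \tfrac{1}{1-t}$. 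A quantitative version of the same argument — bounding how often a ray exits $[0,s]^2$ before completing its geometric run, using that at $x$- and $y$-scales of order $s$ the density of vertical lines at a given height is of order one — shows this convergence is uniform over $p$ in a region $R_s \subseteq [0,s]^2$ with $|R_s| = (1-o(1))s^2$, the complement being thin strips near the right and bottom edges where one uses only the universal bound. Integrating over $[0,s]^2$ gives $\E[\A_1(s)] = \tfrac{s^2}{1-t} + o(s^2)$.

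It then remains to upgrade convergence of the mean to convergence in probability, which I would do by showing $\mathrm{Var}(\A_1(s)) = o(s^4)$ and applying Chebyshev. The nucleation count contributes variance $s^2$, and for the crossing count one estimates $\mathrm{Cov}(M_p(s), M_q(s))$: the geometric tail makes a typical ray short (length $O(\mathrm{polylog}\,s)$ with overwhelming probability away from the axes), and a vertical segment survives a vertical distance $h$ only with probability decaying in $h$, so $M_p(s)$ depends — up to exponentially small errors — only on the configuration in a $\mathrm{polylog}(s)$-neighborhood of $p$. Consequently only $O(s^2\,\mathrm{polylog}\,s)$ pairs $(p,q)$ contribute non-negligibly, each by an $O(1)$ amount, so $\mathrm{Var}(\A_1(s)) = O(s^2\,\mathrm{polylog}\,s) = o(s^4)$.

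The hard part will be this last step: making the quasi-finite-range-of-dependence precise, since the dependence of $M_p(s)$ on vertical segments arriving from below must be quantified (one needs decay estimates for how far a vertical line survives, which in turn lean on the shape theorem). The geometric-law computation itself is clean once the $U_v$-coupling is in place, and the boundary bookkeeping in the expectation, while fiddly, is routine.
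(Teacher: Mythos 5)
Your decomposition $\A_1(s) = \sum_{p} M_p(s)$ over nucleations $p$ and the identification of $M_p(s)$ as asymptotically geometric with parameter $1-t$ is essentially the same strategy the paper uses for the mean: the paper writes $\A_1(s) = \sum_{i=1}^{K(s)} \alpha_i(s)$, proves the universal bound $\E[\alpha_n(s)\mid K(s)\geq n] \leq \frac{1}{1-t}$ via the same geometric domination you describe, and gets the matching lower bound by showing (via the quantity $\h_{R(s)}$ and the shape theorem in Proposition~\ref{prop:LLN}) that rays started away from the boundary meet many vertical lines, i.e.\ $\alpha_i(s) \geq X_i \wedge \h_{R(s)}$ for i.i.d.\ geometrics $X_i$. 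Your $U_v$-coupling argument is a clean way to rigorize the stochastic domination. So Lemma~\ref{lem:convergeexp} is recovered.

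The genuine gap is in upgrading the convergence of the mean to convergence in probability. You propose to bound $\mathrm{Var}(\A_1(s))$ directly by a quasi-finite-range-of-dependence argument, and you yourself flag this as ``the hard part'' that you have not carried out. It is indeed hard: to control $\mathrm{Cov}(M_p(s), M_q(s))$ you need quantitative decay of the influence of one ray on another, which requires quantitative control on how far vertical segments survive; the bounds available in this paper (the shape theorem and $\h_{R(s)} \to \infty$ almost surely) are purely qualitative and give no rate, so the ``polylog'' dependence radius you invoke is not established. The paper sidesteps this entirely with a one-sided argument: since the geometric domination holds pathwise, $\A_1(s)$ is stochastically dominated by $\sum_{i=1}^{K(s)} X_i$ with $\{X_i\}$ i.i.d.\ $\mathrm{Geo}(1-t)$ \emph{independent of the model}. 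The variance of this dominating sum is trivial, giving $\E\bigl[(\tfrac{1}{s^2}\A_1(s) - \tfrac{1}{1-t})^+\bigr] \to 0$ (Lemma~\ref{lem:converge+}). Combined with $\E[\tfrac{1}{s^2}\A_1(s)] \to \tfrac{1}{1-t}$, the negative part must also vanish in $L^1$, and you are done — no decorrelation estimate on the $t$-PNG process itself is needed. This trick exploits the fact that you already know the mean converges to the exact value that the dominating sum concentrates on, and is the key device that keeps the argument soft; without it, your route would require substantially new quantitative input not present in the paper.
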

We will need to prove a few lemmas before proving this proposition. Fix any rectangle $R = [x_1, x_2] \times [y_1, y_2]$. Define \begin{equation*}
\h_R = \min\Big\{\text{number of intersections between } [x_1, x_2] \times \{y\} \text{ and the vertical lines in $t$-PNG model}: y \in [y_1, y_2] \Big\}.
\end{equation*}
In other words, as we move the horizontal segment vertically from $[x_1, x_2] \times \{y_1\}$ to $[x_1, x_2] \times \{y_2\}$, $h_R$ is the minimum number of intersections between the vertical lines and this horizontal segment. 
\begin{lemma}\label{lem:vlinelowerbd}
Fix any rectangle $R = [x_1, x_2] \times [y_1, y_2]$. Let $\ell$ be the number of horizontal lines that enter the rectangle from the left. Let $b$ be the number of vertical lines that enter the rectangle $R$ from the bottom. Then $\h_R \geq (b-\ell)^+$.
\end{lemma}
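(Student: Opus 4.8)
The plan is to track how the number of vertical lines crossing a horizontal segment changes as the segment sweeps upward through the rectangle $R$, and to show that each unit decrease in this count must be "paid for" by a horizontal line that has entered from the left. Concretely, define $g(y)$ to be the number of intersection points between the segment $[x_1,x_2]\times\{y\}$ and the vertical lines of the $t$-PNG model (counting co-travelling lines once, as in the definition of $H_{m,n}$). Then $h_R = \min_{y\in[y_1,y_2]} g(y)$, and $g(y_1) = b$ (the number of vertical lines entering from the bottom). The function $g$ is piecewise constant in $y$ and changes only when the sweeping segment passes a Poisson nucleation, a corner point, or a crossing point inside $R$, or loses/gains a vertical line at the right/left edge $x=x_2$ or $x=x_1$; passing the left edge $x=x_1$ cannot decrease $g$ since nothing is removed there.

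The key step is a local analysis: as $y$ increases past a single feature, how much can $g$ drop? Inspecting the allowed vertex configurations of $\mathsf{L}^1$ (Figure \ref{fig:L-matrix}) and the nucleation rule, a vertical line can terminate (reducing $g$ by one) only at a corner point, and at a corner point one vertical line and one horizontal line come in from below and the left and two lines leave going right and up — wait, rather: at a corner point the two incoming lines (one from below, one from the left) annihilate, so the vertical line from below terminates. Thus each downward jump of $g$ by one consumes exactly one horizontal line arriving at that point. That horizontal line is either a line that entered $R$ from the left boundary, or a horizontal line created earlier inside $R$. But horizontal lines are created inside $R$ only at Poisson nucleations, and each nucleation simultaneously creates a vertical line — so creating an interior horizontal line does not, on net over the sweep, give a "free" decrement: the bookkeeping has to be done by an invariant rather than a naive count. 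The clean way is to introduce the quantity $g(y) + (\text{number of horizontal lines of } R \text{ that have been used up to height } y)$ — more precisely, let $\ell(y)$ be the number of horizontal lines crossing $\{x_2\}\times[y_1,y]$ minus those crossing $\{x_1\}\times[y_1,y]$, or simply track horizontal flux — and show $g(y) + (\text{left-entering horizontal lines not yet consumed}) $ is non-increasing is false too; instead argue that $g(y) \ge g(y_1) - (\text{total horizontal lines consumed by corners below } y)$ and that the number of corners consuming a left-entering horizontal line is at most $\ell$, while corners consuming an interior-born horizontal line are matched against the vertical line born at the same nucleation, giving a net conservation. This is exactly Lemma \ref{lem:lineconserve} applied to the sub-rectangle $[x_1,x_2]\times[y_1,y]$: the number of lines crossing its top and left equals the number crossing its bottom and right, so $g(y) + (\text{horizontal lines out the left, i.e. }0) = $ ... — the cleanest route is to apply Lemma \ref{lem:lineconserve} directly.

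So the streamlined argument: fix $y\in[y_1,y_2]$ and apply Lemma \ref{lem:lineconserve} to the rectangle $R' = [x_1,x_2]\times[y_1,y]$. The lines crossing the bottom of $R'$ number $b$; the lines crossing the left of $R'$ number at most $\ell$ (those entering from the left boundary of $R$, restricted to height $\le y$). The lines crossing the top of $R'$ number exactly $g(y)$. The lines crossing the right of $R'$ number some $\ge 0$. Lemma \ref{lem:lineconserve} gives $(\text{top}) + (\text{left}) = (\text{bottom}) + (\text{right})$, i.e. $g(y) + (\text{left crossings of } R') = b + (\text{right crossings of } R') \ge b$, hence $g(y) \ge b - (\text{left crossings of }R') \ge b - \ell$. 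Since $g(y)\ge 0$ always, $g(y) \ge (b-\ell)^+$, and taking the minimum over $y\in[y_1,y_2]$ yields $h_R \ge (b-\ell)^+$. The main obstacle is purely bookkeeping: making precise that "left crossings of $R'$" is bounded by $\ell$ and that Lemma \ref{lem:lineconserve}, stated for a fixed rectangle, applies verbatim to $R'$ for every $y$; both are immediate once one notes that a horizontal line entering $R$ from the left and crossing the vertical segment $\{x_1\}\times[y_1,y]$ is one of the $\ell$ given lines, and that the $t$-PNG configuration restricted to $R'$ is itself a valid configuration on that rectangle.
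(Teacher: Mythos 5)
Your final ``streamlined'' argument is correct and rests on the same underlying conservation principle as the paper's proof, but the two are written very differently. The paper's proof of Lemma \ref{lem:vlinelowerbd} is essentially one line (``a vertical line will go upward until it meets a horizontal line; each horizontal line can annihilate at most one vertical line'') and leaves the bookkeeping implicit---in particular, why a horizontal line born at an interior nucleation cannot produce a net loss of vertical lines, since that nucleation also emits a new vertical line. Your version makes this precise by invoking Lemma \ref{lem:lineconserve} on the sub-rectangle $R'=[x_1,x_2]\times[y_1,y]$ for each $y\in[y_1,y_2]$: writing $g(y)$ for the number of vertical-line crossings of $[x_1,x_2]\times\{y\}$, conservation gives $g(y)+(\text{left crossings of }R')=b+(\text{right crossings of }R')\geq b$, the left crossings are at most $\ell$, so $g(y)\geq b-\ell$; combined with $g(y)\geq 0$ and minimizing over $y$ this yields $h_R\geq (b-\ell)^+$. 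That is clean and, if anything, more explicit than the paper's own argument. The lengthy exploratory preamble about sweeping in $y$ and tracking ``consumed'' horizontal lines is an unfinished version of exactly the same idea and should simply be cut; once deleted, what remains is a correct proof taking essentially the paper's route, just with Lemma \ref{lem:lineconserve} cited where the paper reasons informally.
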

\begin{proof}
In the $t$-PNG model, a vertical line will go upward until it meets a horizontal line. Moreover, each horizontal line can annihilate at most one vertical line. This concludes the lemma. 
\end{proof}

Fix $M \in \mathbb{Z}_{\geq 2}$. Let $R(s) = [(1- \frac{1}{M}) s, s] \times [\frac{s}{M}, s]$.
\begin{lemma}
We have almost surely,
	\begin{equation}\label{eq:nt}
	\lim_{s \to \infty} \h_{R(s)} \to \infty.
	\end{equation}
\end{lemma}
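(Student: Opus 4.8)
The plan is to deduce \eqref{eq:nt} directly from the shape theorem of Proposition \ref{prop:LLN}. The first step is to rewrite $\h_{R(s)}$ in terms of the height function: for a fixed height $y$, every vertical line of the $t$-PNG model that meets the horizontal segment $[(1-\frac1M)s,s]\times\{y\}$ is crossed from left to right as one traverses that segment, so the number of such intersections is at least the increment $N(s,y)-N((1-\frac1M)s,y)$ of the height function along it. Minimizing over $y\in[s/M,s]$ and writing $y=\theta s$ gives
\begin{equation*}
\h_{R(s)}\;\ge\;\min_{\theta\in[1/M,\,1]}\Big(N(s,\theta s)-N\big((1-\tfrac1M)s,\theta s\big)\Big).
\end{equation*}

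The second step is to let $s\to\infty$ on the right. By Proposition \ref{prop:LLN}, on one event of probability one we have $s^{-1}N(sx,sy)\to\gamma\sqrt{xy}$ for all $x,y>0$; since $N$ is nondecreasing in each coordinate and the limit is continuous, this convergence is uniform on the compact box $[1-\frac1M,1]\times[\frac1M,1]$. Dividing the displayed inequality by $s$ and using uniformity to interchange the limit with the minimum over $\theta$, we obtain
\begin{equation*}
\liminf_{s\to\infty}\frac{\h_{R(s)}}{s}\;\ge\;\min_{\theta\in[1/M,1]}\gamma\sqrt{\theta}\,\Big(1-\sqrt{1-\tfrac1M}\Big)\;=\;\frac{\gamma}{\sqrt M}\Big(1-\sqrt{1-\tfrac1M}\Big)\;=:\;c_M ,
\end{equation*}
the minimum being attained at $\theta=1/M$ because $1-\sqrt{1-1/M}>0$ for $M\ge2$.

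It then remains only to observe that $c_M>0$, which holds since $\gamma>0$ (for instance $\gamma\ge2$ by Lemma \ref{lem:gamma2}). Hence, on the probability-one event above, $\h_{R(s)}\ge\tfrac{c_M}{2}\,s$ for all $s$ large enough, so $\h_{R(s)}\to\infty$ almost surely. The only point requiring a little care is the passage of the shape theorem from pointwise to locally uniform convergence; this is routine given the monotonicity of $N$, and it can be bypassed altogether by bounding $\min_\theta$ from below by the minimum over a finite grid $1/M=\theta_0<\dots<\theta_k=1$, using $N(s,\theta s)\ge N(s,\theta_{j-1}s)$ and $N((1-\frac1M)s,\theta s)\le N((1-\frac1M)s,\theta_j s)$ for $\theta\in[\theta_{j-1},\theta_j]$, applying Proposition \ref{prop:LLN} at the finitely many $\theta_j$, and then refining the grid. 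I do not anticipate a genuine obstacle; the content of the lemma is carried entirely by the already-established shape theorem together with the positivity of $\gamma$.
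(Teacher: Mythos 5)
Your proof is correct and follows essentially the same route as the paper's, which partitions $R(s)$ into $M^2-M$ horizontal strips and applies Lemma \ref{lem:vlinelowerbd} to bound $h_{R(s)}$ from below by height-function differences $N\big(s,\tfrac{is}{M^2}\big)-N\big((1-\tfrac1M)s,\tfrac{(i+1)s}{M^2}\big)$ at the grid points $\tfrac{is}{M^2}$ --- exactly your grid-refinement bound, phrased in terms of lines entering the strips rather than directly as height-function increments. Your direct identification of $h_{R(s)}$ with the minimal increment $\min_{y}\big(N(s,y)-N((1-\tfrac1M)s,y)\big)$ is a small but genuine simplification of the exposition, since it lets you bypass Lemma \ref{lem:vlinelowerbd} and go straight to Proposition \ref{prop:LLN} via monotonicity of $N$.
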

\begin{proof}
For each $i \in \{0, 1, \dots, M^2-1\}$, let $b_i(s)$ be the number of lines entering the rectangle $R_i = [(1- \frac{1}{M}) s, s] \times [\frac{is}{M^2}, \frac{(i+1)s}{M^2}]$ from the bottom boundary. 
	Let $\ell_i (s)$ be the number of lines entering the same rectangle from the left boundary. 
	It is straightforward that  
	\begin{align*}
	&b_i (s) = N\Big(s, \frac{is}{M^2}\Big) - N\Big((1-\frac{1}{M})s, \frac{is}{M^2}\Big),\\
	&\ell_i (s) = N\Big((1-\frac{1}{M})s, \frac{(i+1) s}{M^2}\Big) - N\Big((1-\frac{1}{M})s, \frac{i s}{M^2}\Big).
	\end{align*}
By Lemma \ref{lem:vlinelowerbd}, we know that $h_{R_i(s)} \geq b_i(s) - \ell_i (s)$. 
Since $R = \cup_{i = M}^{M^2-1} R_i$, we have $h_{R(s)} \geq \min_{i = M}^{M^2-1} \big(b_i (s) - \ell_i (s)\big)$.

	By the law of large numbers in Proposition \ref{prop:LLN}, we know that almost surely,
	\begin{align*}
	&\lim_{s \to \infty} \frac{b_i(s)}{s} = \gamma \frac{\sqrt{i}}{M}\Big(1 - \sqrt{1 - \frac{1}{M}}\Big),
	\\
	&\lim_{s \to \infty} \frac{\ell_i (s)}{s} = \gamma \sqrt{1 - \frac{1}{M}} \Big(\frac{\sqrt{i+1}}{M} - \frac{\sqrt{i}}{M}\Big).
	\end{align*}
Subtracting the second equation from the first above yields
	\begin{equation*}
	\lim_{s \to \infty} \frac{b_i (s) - \ell_i (s)}{s} = \gamma \bigg(\frac{\sqrt{i}}{M}\Big(1 - \sqrt{1 - \frac{1}{M}}\Big) - \sqrt{1 - \frac{1}{M}} \Big(\frac{\sqrt{i+1}}{M} - \frac{\sqrt{i}}{M}\Big)\bigg). 
	\end{equation*}  
	When $i \geq M$, it is straightforward to check that 
	\begin{equation*}
	\frac{\sqrt{i}}{M}\Big(1 - \sqrt{1 - \frac{1}{M}}\Big) - \sqrt{1 - \frac{1}{M}} \Big(\frac{\sqrt{i+1}}{M} - \frac{\sqrt{i}}{M}\Big) > 0.
	\end{equation*}
	This together with $\gamma > 0$ (see Lemma \ref{lem:gamma2}) implies that for any $i \geq M$, we have $\lim_{s \to \infty} \frac{b_i (s) - \ell_i (s)}{s} > 0$. Since 
	$\h_{R(s)} \geq \min_{i = M}^{M^2-1} \big( b_i (s) - \ell_i (s) \big)$, we conclude that $\lim_{s \to \infty} \frac{h_{R(s)}}{s} > 0$. This concludes the lemma. 
\end{proof} 
\begin{lemma}\label{lem:convergeexp}
We have 
\begin{equation*}
\lim_{s \to \infty} s^{-2} \mathbb{E}\big[\A_1(s)\big] = \frac{1}{1-t}.
\end{equation*} 
\end{lemma}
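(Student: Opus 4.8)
The plan is to count the $\alpha$-points by grouping them according to the horizontal line on which they lie. Every horizontal line of the $t$-PNG model emanates from a unique Poisson nucleation, and every crossing point lies on exactly one horizontal line; moreover a crossing point inside $[0,s]^2$ lies on a horizontal line whose originating nucleation is also inside $[0,s]^2$ (that nucleation has the same height as, and lies weakly to the left of, the crossing point). Writing $\Pi$ for the Poisson process of nucleations and, for a nucleation at $p=(x_p,y_p)$, letting $K_p$ be the number of crossing points lying on the rightward horizontal ray issued from $p$ before that ray exits $[0,s]^2$, we therefore have $\A_1(s)=\sum_{p\in\Pi\cap[0,s]^2}(1+K_p)$. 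By the Mecke equation for the intensity-one Poisson process,
\begin{equation*}
\E[\A_1(s)]=\int_{[0,s]^2}\E[1+K_p]\,dp ,
\end{equation*}
where the expectation on the right is taken in the $t$-PNG model with one extra nucleation planted at $p$. It thus suffices to show that $\E[1+K_p]\le\tfrac1{1-t}$ for every $p$, and that $\E[1+K_p]\ge\tfrac1{1-t}-o(1)$ uniformly over $p$ ranging in a subset of $[0,s]^2$ of area $(1-o(1))s^2$; this immediately gives $\lim_{s\to\infty}s^{-2}\E[\A_1(s)]=\tfrac1{1-t}$.

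To estimate $\E[1+K_p]$, let $M_p$ denote the number of vertical lines, other than the one issued from $p$, that cross the horizontal segment $[x_p,s]\times\{y_p\}$; these are exactly the vertical lines the ray from $p$ would meet if it were never annihilated. The key point — making precise the statement in the introduction that the number of $\alpha$-points along a horizontal ray is asymptotically geometric — is that $M_p$ is determined by the part of the configuration lying strictly below height $y_p$, hence is independent of the crossing/annihilation outcomes along the ray issued from $p$, all of which occur at height $y_p$; and those outcomes, read in the order in which the ray encounters the vertical lines, form an i.i.d.\ $\mathrm{Bernoulli}(t)$ sequence. Since the ray makes at least $j$ crossings if and only if $M_p\ge j$ and its first $j$ outcomes are all crossings, we get $\mathbb{P}(K_p\ge j)=t^j\,\mathbb{P}(M_p\ge j)$ for every $j\ge1$. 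Summing over $j$ yields $\E[K_p]=\sum_{j\ge1}\mathbb{P}(K_p\ge j)\le\sum_{j\ge1}t^j=\tfrac{t}{1-t}$, hence $\E[1+K_p]\le\tfrac1{1-t}$ for every $p$; this already gives $\limsup_{s\to\infty}s^{-2}\E[\A_1(s)]\le\tfrac1{1-t}$.

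For the lower bound, fix an integer $M\ge2$, let $R(s)=[(1-\tfrac1M)s,s]\times[\tfrac sM,s]$ be the rectangle appearing in \eqref{eq:nt}, and set $D_M(s)=[0,(1-\tfrac1M)s]\times[\tfrac sM,s]$, of area $(1-\tfrac1M)^2s^2$. For $p\in D_M(s)$ the ray from $p$ contains the full-width segment $[(1-\tfrac1M)s,s]\times\{y_p\}$ with $y_p\in[\tfrac sM,s]$, so $M_p\ge\h_{R(s)}$ by the definition of $\h_{R(s)}$. Hence, for every fixed $k\ge1$ and every $p\in D_M(s)$, using $\mathbb{P}(K_p\ge j)=t^j\mathbb{P}(M_p\ge j)\ge t^j\mathbb{P}(M_p\ge k)$ for $j\le k$,
\begin{equation*}
\E[K_p]\;\ge\;\Big(\sum_{j=1}^{k}t^j\Big)\mathbb{P}(M_p\ge k)\;\ge\;\Big(\sum_{j=1}^{k}t^j\Big)\mathbb{P}\big(\h_{R(s)}\ge k\big).
\end{equation*}
By \eqref{eq:nt}, $\h_{R(s)}\to\infty$ almost surely, so $\mathbb{P}(\h_{R(s)}\ge k)\to1$ as $s\to\infty$; letting $s\to\infty$ and then $k\to\infty$ gives $\liminf_{s\to\infty}\inf_{p\in D_M(s)}\E[1+K_p]\ge1+\tfrac{t}{1-t}=\tfrac1{1-t}$. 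Integrating over $D_M(s)$ and dividing by $s^2$ then gives $\liminf_{s\to\infty}s^{-2}\E[\A_1(s)]\ge(1-\tfrac1M)^2\tfrac1{1-t}$, and since $M\ge2$ is arbitrary we conclude $\liminf_{s\to\infty}s^{-2}\E[\A_1(s)]\ge\tfrac1{1-t}$. Together with the upper bound this proves the lemma.

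The step I expect to be the main obstacle is making rigorous the independence asserted in the second paragraph: one must verify that $M_p$, defined so as not to be affected by the ray issued from $p$, is measurable with respect to the configuration strictly below height $y_p$, while, conditionally on that configuration, the successive crossing/annihilation outcomes along the ray are fresh i.i.d.\ $\mathrm{Bernoulli}(t)$ variables — this is exactly where the sequential, independent sampling rule of the $t$-PNG model is used, and some care is needed because the \emph{positions} of the intersection points along the ray themselves depend on the configuration below $y_p$. The remaining steps are routine manipulations of geometric sums together with the Mecke formula and the almost sure divergence $\h_{R(s)}\to\infty$ recorded in \eqref{eq:nt}.
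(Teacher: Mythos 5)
Your proof is correct and takes essentially the same route as the paper: both group the $\alpha$-points by the horizontal line they lie on, bound the number of crossings on each line by a $\mathrm{Geom}(1-t)$ random variable to get the upper bound, and combine the almost-sure divergence $\h_{R(s)}\to\infty$ from \eqref{eq:nt} with a truncation at a fixed level (your $k$, the paper's $m$) for the lower bound. The differences are cosmetic: you invoke Mecke's formula where the paper sums directly over labeled nucleations with indicator cutoffs, and you replace the paper's informal stochastic-domination phrasing (``$\alpha_n(s)$ is stochastically dominated by a geometric\dots'', ``there exist i.i.d.\ geometric random variables\dots such that $\alpha_i(s)\ge X_i\wedge \h_R(s)$'') with the exact identity $\mathbb{P}(K_p\ge j)=t^j\,\mathbb{P}(M_p\ge j)$, a tidier way of packaging the same probabilistic content. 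The independence point you flag at the end --- that $M_p$ is determined by the configuration strictly below height $y_p$, while the crossing coins encountered at height $y_p$ are fresh i.i.d.\ $\mathrm{Bernoulli}(t)$ --- is exactly the hidden input behind the paper's domination claim as well, so your concern about making it rigorous applies equally to the published argument; your formulation actually exposes the required dependence structure more transparently than the paper does.
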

\begin{proof}
We prove the lemma by showing that 
\begin{align}
\label{eq:upbd}\tag{\textbf{upper bound}}
\limsup_{s \to \infty} s^{-2} \mathbb{E}[\A_1 (s)] \leq \frac{1}{1-t},\\
\label{eq:lwbd}\tag{\textbf{lower bound}}
\liminf_{s \to \infty} s^{-2} \mathbb{E}[\A_1 (s)] \geq \frac{1}{1-t}.   
\end{align}
\textbf{Proof of \eqref{eq:upbd}:}
Let $K(s)$ be the total number of Poisson nucleations in $[0, s] \times [0, s]$. We label these points by $1, \dots, K(s)$. Let $\alpha_i (s)$ be the number of vertical lines in the $t$-PNG model that 
cross the 
the horizontal line emanating from the nucleation point $i$ (including the vertical line that emanates from the point $i$). 
For every $n \in \Z_{\geq 1}$, 
\begin{equation}\label{eq:geoupbd}
\mathbb{E}\big[\alpha_n(s)\,|\, K(s) \geq n\big] \leq \frac{1}{1-t},
\end{equation} 
This is because if we condition on the event $\{K(s) \geq n\}$, $\alpha_n (s)$ is stochastically dominated by a geometric random variable with parameter $1-t$.
As a result, we have 
	\begin{equation*}
	\mathbb{E}\big[\A_1(s)\big] = \mathbb{E}\Big[\sum_{i = 1}^{K(s)} \alpha_i (s)\Big]  = \sum_{n = 1}^\infty \mathbb{P}(K(s) \geq n) \mathbb{E}\big[\alpha_n(s) \,|\, K(s) \geq n\big] \leq \frac{1}{1-t} \mathbb{E}\big[K(s)\big] = \frac{s^2}{1-t}. 
	\end{equation*}
For the first equality, note that there are $\alpha_i (s)$ number of $\alpha$-points that have the same $y$-coordinate as that of the nucleation point $i$, hence $\mathsf{A}_1 (s) = \sum_{i = 1}^{K(s)} \alpha_i (s)$.
The first inequality above is due to \eqref{eq:geoupbd}. The third equality holds because $K(s)$ is a Poisson random variable with parameter $s^2$.
	Hence, we have shown that 
	\begin{equation}\label{eq:upperbd}
	\limsup_{s \to 
		\infty} \frac{1}{s^2} \mathbb{E}\big[ \A_1 (s)\big] \leq \frac{1}{1-t}.
	\end{equation}
\textbf{Proof of \eqref{eq:lwbd}:}
Fix $M, m \in \Z_{\geq 2}$. Define $c_M(\e) = (1 - \frac{1}{M})^2 - \e$. We can choose $\e > 0$ small enough such that $c_M(\e) > 0$. Let $K_M(s)$ be the total number of Poisson nucleations in the square $[0, (1-\frac{1}{M})s)] \times [ \frac{s}{M}, s]$. We have 
\begin{align*}
\E\big[\A_1(s)\big] &\geq\mathbb{E}\Big[\sum_{i = 1}^{K_M (s)}  \alpha_i (s)\Big] \geq 
\mathbb{E}\Big[\Big(\sum_{i = 1}^{c_M(\e) s^2} \alpha_i (s)\Big) \mathbbm{1}_{\{K_M(s) \geq c_M(\e) s^2, \h_R (s) \geq m\}}\Big]. 
\end{align*}
For a line that emanates rightward from a Poisson nucleation in $[0, (1-\frac{1}{M})s)] \times [(1 - \frac{1}{M}) s, s]$, there are potentially at least $h_R (s)$ number of vertical lines it can cross. 
Hence, there exist i.i.d. geometric random variables $\{X_i\}_{i \in \Z_{\geq 1}} \sim \text{Geo}(1-t)$ that are independent of the $t$-PNG model, and such that $\alpha_i(s) \geq X_i \wedge \h_R(s)$. Therefore, we have
\begin{equation*}
\mathbb{E}\Big[\Big(\sum_{i = 1}^{c_M(\e) s^2} \alpha_i\Big) \mathbbm{1}_{\{K_M(s) \geq c_M(\e) s^2, \h_R(s) \geq m\}}\Big] \geq c_M(\e) s^2  \mathbb{E}\big[X_1 \wedge m\big] \mathbb{P}\Big(K_M(s) \geq c_M(\e)s^2, h_R (s) \geq m\Big). 
\end{equation*}  
We divide both sides by $s^2$ and let $s \to \infty$. Note that $K_M(s)$ is a Poisson random variable with mean $(1- \frac{1}{M})^2 s^2$. By a standard large deviation bound, $\lim_{s \to \infty}\mathbb{P}\big(K_M(s) \geq c_M(\e) s^2  \big)= 1$. Furthermore, by Lemma \ref{eq:nt}, we have $\lim_{s \to \infty}\mathbb{P}(h_R (s) \geq m) = 1$. Hence,
\begin{equation*}
\lim_{s \to \infty}\mathbb{P}\Big(K_M(s) \geq c_M(\e) s^2, h_R(s) \geq m\Big)  = 1. 
\end{equation*}
This implies that 
\begin{equation*}
\liminf_{s \to \infty}\frac{\mathbb{E}\big[ \A_1(s)\big]}{s^2} \geq c_M(\e) \mathbb{E}[X_1 \wedge m].
\end{equation*}	
Letting $M, m \to \infty$ and $\e \to 0$, we get
\begin{equation*}
\liminf_{s \to 
\infty} \frac{\mathbb{E}[\A_1(s)]}{s^2} \geq \frac{1}{1-t}. \qedhere
\end{equation*}	
\end{proof}
To prove Proposition \ref{prop:alphaptlim}, it remains to prove the following lemma. 
\begin{lemma}\label{lem:converge+}
We have $\lim_{ s \to \infty}\mathbb{E}\Big[\big(\frac{1}{s^2} \A_1 (s)  - \frac{1}{1-t}\big)^+\Big] = 0$ where $x^+ := \max(x, 0)$. 
\end{lemma}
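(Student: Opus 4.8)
The plan is to upgrade the stochastic-domination argument from the proof of Lemma~\ref{lem:convergeexp} into a genuine \emph{pathwise} coupling. Recall from that proof the decomposition $\A_1(s) = \sum_{i=1}^{K(s)}\alpha_i(s)$, where $K(s)$ is the number of Poisson nucleations in $[0,s]\times[0,s]$ and $\alpha_i(s)$ is the number of vertical lines crossing the rightward ray emanating from the $i$-th nucleation (counting the one emanating from the nucleation itself); equivalently $\alpha_i(s) = 1 + \#\{\text{crossing points at the height of nucleation }i\text{ inside }[0,s]^2\}$. The key structural point is that, since all nucleations have distinct $y$-coordinates, every intersection point of the $t$-PNG model lies on the rightward ray of a \emph{unique} nucleation---the one at its height. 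Hence, when we sample the model in the prescribed order (increasing $x+y$), the coin flipped at an intersection point that is the $k$-th intersection along the ray of nucleation $i$ may be taken to be $\xi_{i,k}$, where $(\xi_{i,k})_{i\ge1,k\ge1}$ is an i.i.d.\ family of $\mathrm{Bernoulli}(t)$ variables sampled independently of the Poisson process $\Pi$; indeed, by the time that point is processed the index $k$ is already determined, and distinct intersection points receive distinct $\xi_{i,k}$, so this is a legitimate realization of the $t$-PNG model.

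Granting this coupling, set $G_i = \inf\{k\ge1 : \xi_{i,k}=0\}$. Then each $G_i$ is geometric with parameter $1-t$ (supported on $\{1,2,\dots\}$, mean $\tfrac1{1-t}$, finite second moment), the $G_i$ are i.i.d., and---being a function of $(\xi_{i,k})_k$ only---the family $(G_i)_i$ is independent of $\Pi$, hence of $K(s)$. Along the ray of nucleation $i$ the $\alpha$-points are the nucleation together with the crossing points, and the ray creates a crossing point only at intersections strictly before its first annihilation, i.e.\ at indices $k < G_i$; therefore $\alpha_i(s)\le G_i$ for every $i\le K(s)$, and so pointwise
\begin{equation*}
\A_1(s) \;=\; \sum_{i=1}^{K(s)}\alpha_i(s)\;\le\;\sum_{i=1}^{K(s)}G_i\;=:\;W_s.
\end{equation*}
Here $W_s$ is a compound Poisson sum of $K(s)\sim\mathrm{Poisson}(s^2)$ i.i.d.\ terms $G_i$ independent of $K(s)$, so $\E[W_s]=s^2\E[G_1]=\tfrac{s^2}{1-t}$ and $\mathrm{Var}(W_s)=s^2\E[G_1^2]=Cs^2$ with $C=C(t)<\infty$.

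Since $x\mapsto x^+$ is nondecreasing and $\A_1(s)-\tfrac{s^2}{1-t}\le W_s-\E[W_s]$ pointwise, Cauchy--Schwarz gives
\begin{equation*}
\E\Big[\big(\tfrac1{s^2}\A_1(s)-\tfrac1{1-t}\big)^+\Big]
=\tfrac1{s^2}\E\big[(\A_1(s)-\tfrac{s^2}{1-t})^+\big]
\le\tfrac1{s^2}\,\E\big[\,|W_s-\E W_s|\,\big]
\le\tfrac1{s^2}\sqrt{\mathrm{Var}(W_s)}
=\tfrac{\sqrt{C}}{s}\;\longrightarrow\;0,
\end{equation*}
which is the assertion of the lemma. (Proposition~\ref{prop:alphaptlim} then follows immediately: together with Lemma~\ref{lem:convergeexp} this yields $\E\big|\tfrac1{s^2}\A_1(s)-\tfrac1{1-t}\big|\to0$, and in particular convergence in probability.)

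I expect the only delicate step to be the justification of the coupling in the first paragraph---namely that the per-ray labelling of the coin flips is consistent with the sequential sampling rule and produces a family $(G_i)_i$ that is i.i.d.\ and independent of $K(s)$ while still dominating the $\alpha_i(s)$. Once that coupling is set up, the remaining ingredients (the pointwise inequality $\alpha_i(s)\le G_i$, the compound-Poisson mean and variance, and Cauchy--Schwarz) are entirely routine.
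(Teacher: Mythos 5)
Your proof is correct and follows the same strategy as the paper: dominate $\A_1(s)$ by a compound Poisson sum $\sum_{i=1}^{K(s)} G_i$ of i.i.d.\ geometrics independent of $K(s)$, then conclude via a second-moment (Cauchy--Schwarz) bound. The paper merely asserts the stochastic domination of the sum; your explicit per-ray coupling $(\xi_{i,k})$ supplies the pathwise inequality $\alpha_i(s)\le G_i$ that the paper's assertion implicitly relies on, so your write-up fills in a step the paper leaves to the reader.
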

\begin{proof}
Since each $\alpha_i(s)$ is stochastically dominated by a geometric random variable, it follows that the random variable $\A_1(s)$ is stochastically dominated by $\sum_{i = 1}^{K(s)} X_i$, where $\{X_i\}_{i = 1}^\infty$ are i.i.d. geometric random variables that are independent of $K(s)$. Hence, it suffices to show that 
\begin{equation}\label{eq:exp+}
\lim_{s \to \infty} \mathbb{E}\bigg[\Big(\frac{1}{s^2} \sum_{i = 1}^{K(s)} X_i - \frac{1}{1-t}\Big)^+ \bigg] = 0.
\end{equation}
We compute
\begin{align*}
\mathbb{E}\bigg[\Big(\frac{1}{s^2} \sum_{i = 1}^{K(s)} X_i - \frac{1}{1-t}\Big)^2 \bigg] &= \mathbb{E}\bigg[\Big(\frac{1}{s^2} \sum_{i= 1}^{K(s)} (X_i - \frac{1}{1-t}) + \frac{1}{1-t} (\frac{K(s)}{s^2} - 1)\Big)^2\bigg]\\
&\leq 2 \mathbb{E}\bigg[\Big(\frac{1}{s^2} \sum_{i = 1}^{K(s)} (X_i - \frac{1}{1-t})\Big)^2\bigg] + \frac{2}{(1-t)^2} \mathbb{E}\Big[(\frac{K(s)}{s^2} - 1)^2\Big] = \mathcal{O}(s^{-2}).
\end{align*}
For the inequality, we use $(a+b)^2 \leq 2(a^2+b^2)$. The last equality follows from a direct computation since $K(s)$ is a Poisson random variable with mean $s^2$. This concludes \eqref{eq:exp+}.
\end{proof}
\begin{proof}[Proof of Proposition \ref{prop:alphaptlim}]
Using Lemma \ref{lem:convergeexp} and \ref{lem:converge+}, we have $\lim_{s \to \infty} \mathbb{E}[|\frac{1}{s^2} \A_1(s) - \frac{1}{1-t}|] = 0$. This implies the convergence in probability. 
\end{proof}
 
\subsection{Proving $\gamma = \frac{2}{\sqrt{1-t}}$}
We proceed to prove that $\gamma = \frac{2}{\sqrt{1-t}}$ by building a connection between the number of $\alpha$-points and the limit shape, using the same argument as in \cite{groeneboom2001ulam}. 
\begin{lemma}
Fix $x, y > 0$. We have
\begin{equation}\label{eq:heightfunc}
N(x, y) = \#\{\alpha \text{ points in } [0, x] \times [0, y] \} - \#\{\beta \text{ points in } [0, x] \times [0, y]\}.
\end{equation}
\end{lemma}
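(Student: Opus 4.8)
The plan is to reduce the identity to a cancellation of crossing points together with a conservation law for vertical line segments. Since the set of $\alpha$-points is the disjoint union of the Poisson nucleations and the crossing points, while the set of $\beta$-points is the disjoint union of the corner points and the crossing points, the crossing points cancel in the difference on the right-hand side of \eqref{eq:heightfunc}; so it suffices to prove that $N(x,y)$ equals the number of nucleations in $[0,x]\times[0,y]$ minus the number of corner points in $[0,x]\times[0,y]$.

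First I would evaluate $N(x,y)$ along the path that goes from $(0,0)$ up the $y$-axis to $(0,y)$ and then horizontally to $(x,y)$. Because the $t$-PNG model considered here has empty boundary data, no line of the model touches the $y$-axis, so the vertical leg contributes nothing; along the horizontal leg the height function increases by $1$ each time we cross a vertical segment. Hence $N(x,y)$ equals the number of maximal vertical segments of the model that intersect $[0,x]\times\{y\}$. (For the measure-zero set of values of $y$ for which the line $\{y=y\}$ passes through a nucleation or an intersection point, one passes to the generic case using right-continuity of both sides in $y$.)

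Next I would perform a vertical sweep. For $b\in[0,y]$ let $V(b)$ be the number of maximal vertical segments intersecting $[0,x]\times\{b\}$, so $N(x,y)=V(y)$ and $V(0)=0$. A maximal vertical segment has constant $x$-coordinate, so it never enters or leaves the strip $[0,x]\times\R$ through the sides; inspecting how lines emanate from a nucleation together with the vertex configurations of nonzero weight in Figure~\ref{fig:L-matrix} shows that, inside the box, a maximal vertical segment is born precisely at a nucleation, is destroyed precisely at a corner point (the configuration of weight $1-t$), and passes straight through a crossing point (the configuration of weight $t$) and through ``line continues'' vertices. Consequently $V$ is piecewise constant and jumps by $+1$ at the height of each nucleation in $[0,x]\times(0,y]$, by $-1$ at the height of each corner point in $[0,x]\times(0,y]$, and is unchanged at crossing points. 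Summing the jumps gives $N(x,y)=V(y)=\#\{\alpha\text{ points in }[0,x]\times[0,y]\}-\#\{\beta\text{ points in }[0,x]\times[0,y]\}$ after reinstating the cancelled crossing points, which is what we wanted.

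The only genuinely model-specific input — and the step I expect to require the most care — is the claim used in the sweep: that within the box every maximal vertical segment originates at a nucleation (nothing enters from the bottom, by the empty boundary condition), terminates, if at all before height $y$, at a corner point, and is neither created nor annihilated at a crossing point. This is immediate from the admissible vertex configurations of $\LL^1$ and the fact that nucleations are in general position with respect to the lines, but it is the one place where the structure of the model, rather than pure bookkeeping, enters; everything else is counting.
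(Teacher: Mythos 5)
Your proof is correct, and it takes a genuinely different route from the paper's. The paper decomposes the line configuration restricted to $[0,x]\times[0,y]$ into non-crossing down-right paths $P_1,\dots,P_m$ (the level lines of the height function), adopts a convention for assigning each crossing point — which is simultaneously an $\alpha$- and a $\beta$-point — to the left or right path passing through it, and then observes that each $P_i$ carries exactly one more $\alpha$-point than $\beta$-point, so the right-hand side of \eqref{eq:heightfunc} telescopes to $m = N(x,y)$. You instead cancel the crossing points algebraically at the outset (since they contribute to both counts), reducing the identity to $N(x,y) = \#\{\text{nucleations}\} - \#\{\text{corners}\}$, and then prove that by a sweep: $V(b)$, the number of maximal vertical segments of the line configuration meeting $[0,x]\times\{b\}$, satisfies $V(0)=0$ (empty boundary), jumps $+1$ at each nucleation, jumps $-1$ at each corner, and is unaffected by crossings, while $V(y) = N(x,y)$ because the height function can be read off along the L-shaped path through $(0,y)$. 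What your approach buys is that you never need the paper's crossing-point assignment convention or the fact that each down-right path carries net excess one; the only model-specific input is, as you say, the catalogue of admissible $\LL^1$-configurations and the constancy of the $x$-coordinate along a vertical segment. What the paper's approach buys is that it makes the down-right (level-line) structure explicit, which is then reused almost verbatim in the proof of Lemma~\ref{lem:Vequality} (where the same labelling $P_1,\dots,P_m$ and the same crossing-point convention reappear), so it amortizes better over the section. Both arguments are sound; yours is arguably the more self-contained for this lemma in isolation.
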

\begin{proof}
For the $t$-PNG model, we can decompose the collection of paths into a number of down-right paths that do not cross each other, see Figure \ref{fig:down-rightpath}.
$N(x, y)$ equals the number of down-right paths that one crosses in the north-east direction from $(0, 0)$ to $(x, y)$. 
\begin{figure}[t]
    \centering
\begin{tikzpicture}
	\begin{scope}
	\draw[->] (0, 0) -- (6, 0); 
	\draw[->] (0, 0) -- (0, 6);
	\draw[dashed] (0, 6) -- (6, 6) -- (6, 0);
	\node at (1.7, 3) {$\olive{P_1}$};
	\node at (2.7, 4) {$\purple{P_2}$};
	\node at (4.7, 5) {$\blue{P_3}$};
	\draw[thick, olive] (1, 6) -- (1, 5) -- (2, 5) -- (2, 2) -- (5, 2) -- (5, 1) -- (6, 1);
	\draw[thick, purple] (2, 6) -- (2, 5) -- (3, 5) -- (3, 3) -- (5, 3) -- (5, 2) -- (6, 2);
		\draw[thick, blue] (5, 6) -- (5, 3) -- (6, 3);
	\end{scope}
	\end{tikzpicture}
    \caption{We decompose the  $t$-PNG model in any rectangle into a sequence of down-right paths. To distinguish between them, we give them different colors (this has nothing to do with the colored model). We label them $P_1, P_2$ and $P_3$. As shown in Figure \ref{fig:def-qpng}, these down-right paths play the role as level lines of the $t$-PNG height function.}
    \label{fig:down-rightpath}
\end{figure}
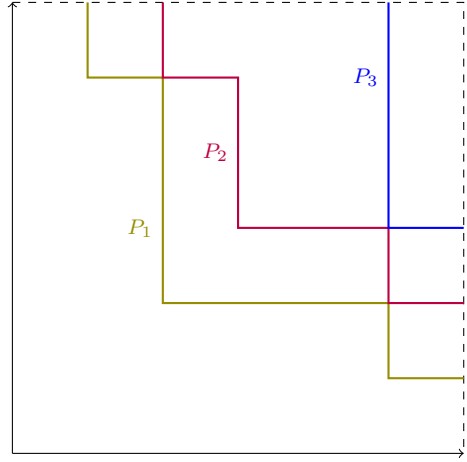
We want to count the number of $\alpha$- and $\beta$- points on each down-right path. To avoid ambiguity at crossing points, which are both $\alpha$- and $\beta$-points,  
we treat the $\beta$-point as lying on the left down-right path passing through the crossing point and the $\alpha$-point as lying on the right down-right path. 
Then, for each down-right path, it is straightforward to see that the difference between the number of $\alpha$-points and the number of $\beta$-points lying on it equals $1$. Hence, the right-hand side of \eqref{eq:heightfunc} is exactly the number of down-right paths that one crosses as going from $(0, 0)$ to $(x, y)$, which equals $N(x, y)$.
\end{proof}

For $x, y > 0$, we denote $V_s (x, y) = s^{-1} N(sx, sy)$. We associate a random measure $\xi_s$ to this random variable by letting 
\begin{equation*}
\xi_s(B) := \int_B dV_s (x, y) = s^{-1} \Big(\# \{\alpha \text{ points in } sB\}  -  \#\{ \beta \text{ points in } sB\}\Big),
\end{equation*}
where $sB=\{(sx, sy): (x, y) \in B\}$.
\bigskip
\\
Let $B = [0, x] \times [0, y]$. We define 
\begin{align*}
V_s^+ (x, y) &= s^{-1} \{\text{number of } \alpha \text{ points in } sB\},\\
V_s^- (x, y) &= s^{-1} \{\text{number of } \beta \text{ points in } sB\},
\end{align*}
and
\begin{equation}\label{eq:Vtilde}
\widetilde{V}_s(x, y) = \int_{[0, x] \times [0, y)} dV_s (u, v). 
\end{equation}
Note that in the integral above, we omit the upper edge of the rectangle $[0, x] \times [0, y]$ but not the lower edge. This is crucial to the establishment of the next lemma, which intrinsically goes back to \cite[Lemma 2.1]{groeneboom2001ulam}. 

\begin{lemma}\label{lem:Vequality}
We have
\begin{equation*}
V_s (x, y)^2 = s^{-1} \big(V_s^+(x, y) + V_s^{-} (x, y)\big) + 2\int_{[0, x] \times [0, y]} \widetilde{V}_s (u, v) dV_s(u, v). 
\end{equation*}
\end{lemma}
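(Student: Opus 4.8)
The plan is to read $dV_s$ as a signed atomic measure and to square the identity $V_s(x,y)=\int_{[0,x]\times[0,y]}dV_s$ by a two-dimensional Fubini computation. Write $B=[0,x]\times[0,y]$, write $p=(u_p,v_p)$ for a point of the plane, and let $\mathrm{nuc}$, $\mathrm{cor}$, $\mathrm{cro}$ denote the sets of Poisson nucleations, corner points and crossing points lying in $sB$. Since the $\alpha$-points are $\mathrm{nuc}\cup\mathrm{cro}$ and the $\beta$-points are $\mathrm{cor}\cup\mathrm{cro}$ (disjoint unions, by genericity), the measure $dV_s$ has an atom of mass $+s^{-1}$ at each point of $\mathrm{nuc}$, mass $-s^{-1}$ at each point of $\mathrm{cor}$, and mass $0$ at each point of $\mathrm{cro}$ (there an $\alpha$-point and a $\beta$-point cancel). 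Hence
\[
V_s(x,y)^2=\Big(\int_B dV_s\Big)^2=\int_{B\times B}dV_s(p)\,dV_s(q),\qquad \sum_{p}\big(dV_s(\{p\})\big)^2=s^{-2}\big(\#\mathrm{nuc}+\#\mathrm{cor}\big),
\]
whereas $s^{-1}\big(V_s^+(x,y)+V_s^-(x,y)\big)=s^{-2}\big(\#\mathrm{nuc}+\#\mathrm{cor}+2\#\mathrm{cro}\big)$.

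Next I would split the domain of the double integral according to the relation $p\prec q$, meaning $u_p\le u_q$ and $v_p<v_q$. This is exactly the order encoded by $\widetilde{V}_s$: one has $\widetilde{V}_s(u_q,v_q)=V_s([0,u_q]\times[0,v_q))=\int_{\{p\,:\,p\prec q\}}dV_s(p)$, and since $dV_s(p)\,dV_s(q)$ is symmetric, $\int_{\{p\prec q\}}dV_s(p)\,dV_s(q)+\int_{\{q\prec p\}}dV_s(p)\,dV_s(q)=2\int_B\widetilde{V}_s\,dV_s$. The diagonal $\{p=q\}$ contributes $\sum_p(dV_s(\{p\}))^2$. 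Comparing with the claimed formula, the lemma is therefore equivalent to
\[
\int_{\{(p,q)\in B\times B\,:\,p\neq q,\ p\not\prec q,\ q\not\prec p\}}dV_s(p)\,dV_s(q)=2s^{-2}\#\mathrm{cro}.
\]
A pair of distinct atoms is incomparable for $\prec$ precisely when either (i) they share the same second coordinate, or (ii) one lies strictly to the northwest of the other.

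The combinatorial heart is this last identity, and here I would use the decomposition of the $t$-PNG configuration into pairwise non-crossing down-right paths (see Figure~\ref{fig:down-rightpath}), along each of which $\alpha$- and $\beta$-points strictly alternate, beginning and ending with an $\alpha$-point. The type-(i) pairs are in bijection with the corner points in $sB$: each corner point is the right endpoint of a maximal horizontal segment whose left endpoint is a nucleation (necessarily also in $sB$), and crossing points carry no mass, so these exhaust the same-row pairs of atoms; their contribution to the ordered integral is $-2s^{-2}\#\mathrm{cor}$. For the type-(ii) pairs, for each atom $q$ one rewrites the signed count of atoms strictly northwest of $q$ as a difference of two values of the height function $N$ at corners of a rectangular region; summing over $q$ and using the alternation along the down-right paths, this collapses to $2s^{-2}$ times the number of $\beta$-points in $sB$, i.e.\ to $2s^{-2}(\#\mathrm{cor}+\#\mathrm{cro})$. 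Adding (i) and (ii) yields exactly $2s^{-2}\#\mathrm{cro}$, as required. One may also argue locally, sweeping a point through $sB$ and observing that the only mismatch between $V_s(x,y)^2$ and $s^{-1}(V_s^++V_s^-)+2\int_B\widetilde{V}_s\,dV_s$ is produced when the sweeping point passes a crossing point.

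The step I expect to be the main obstacle is making this type-(i)/type-(ii) cancellation rigorous, especially the bookkeeping of two features. First, the crossing points: being simultaneously $\alpha$- and $\beta$-points with zero net mass, they are invisible on the left-hand side of the combinatorial identity yet produce its entire right-hand side, so one must track precisely how each crossing point forces a discrepancy between $N^2$ and the naive count. Second, the behavior at the top and right edges of $B$, where a down-right path, a horizontal segment or a vertical segment can leave the box; it is exactly to absorb these edge effects that $\widetilde{V}_s$ is defined through the half-open rectangle $[0,u]\times[0,v)$ rather than the closed one. Under the standing genericity assumptions (no two nucleations share a coordinate, and no nucleation is a corner or a crossing point), the half-open convention should reduce everything to the clean combinatorial statement above, following the argument of \cite[Lemma~2.1]{groeneboom2001ulam}.
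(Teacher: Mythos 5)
Your Fubini decomposition of $V_s(x,y)^2 = \int_{B\times B} dV_s\,dV_s$ into diagonal, $\prec$-comparable, and incomparable off-diagonal pieces is a valid rearrangement, and it correctly reduces the lemma to the identity
$\int_{\{p\neq q,\ p\not\prec q,\ q\not\prec p\}} dV_s(p)\,dV_s(q) = 2s^{-2}\,\#\mathrm{cro}$.
Your treatment of the type-(i) same-row pairs (one $+s^{-1}$ nucleation and one $-s^{-1}$ corner per row with a corner in $sB$, contributing $-2s^{-2}\#\mathrm{cor}$) is essentially complete. The gap is the type-(ii) claim, that the strictly-northwest pairs contribute $+2s^{-2}\#\beta$. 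You assert this with the phrase ``this collapses to $2s^{-2}$ times the number of $\beta$-points'' after ``summing over $q$ and using the alternation along the down-right paths,'' and then offer a second, even vaguer ``sweeping'' argument as an alternative; neither of these is an argument one could actually check. The NW-incomparability relation between atoms is not simply governed by which down-right path an atom lies on (points on a lower-indexed path can be NW, SW, or SE of points on a higher-indexed one), so ``using the alternation along the paths'' is not a computation but a hope, and you flag exactly this step as the main obstacle. This is a genuine gap: the reduction is sound, but the hard combinatorial content has been moved into an unproved lemma.

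By contrast, the paper avoids this entirely. It never forms the double integral; it computes $\int\widetilde V_s\,dV_s$ directly from the observation that if $(u,v)$ is an $\alpha$-point (resp.\ $\beta$-point) on the down-right path $P_i$ then $\widetilde V_s(u,v)=(i-1)/s$ (resp.\ $i/s$), a clean consequence of right-continuity and the half-open rectangle in the definition of $\widetilde V_s$. Summing over paths and using that each path carries exactly one more $\alpha$- than $\beta$-point gives $\int\widetilde V_s\,dV_s = (2s^2)^{-1}m(m-1) - s^{-2}\#\beta$ with $m=sV_s(x,y)$, and rearranging finishes the proof in a few lines. If you want to salvage your route, the missing type-(ii) identity would have to be derived from precisely these same path-based facts, so you would not end up with a shorter or more elementary argument; you would essentially reprove the paper's lemma as an intermediate step.
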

\begin{proof}

Consider the down-right paths we cross while going from $(0, 0)$ to $(sx, sy)$ in the north-east direction. We label these paths $P_1,P_2,... , P_m$, where $P_1$ is the down-right path  closest to the origin and $P_m$ is the path farthest from the origin, see Figure \ref{fig:down-rightpath}. Note that we have $V_s(x, y) = \frac{m}{s}$. For an $\alpha$-point $(u,v)$ lying on $P_i$, we have $\widetilde{V}_s(u, v)= \frac{i-1}{s}$. For a $\beta$-point $(u,v)$ lying on $P_i$, we have $\widetilde{V}_s(u,v) = \frac{i}{s}$. 
Recall that $\A_1(x, y)$ is the number
of $\alpha$-points and $\A_2(x, y)$ is the number of $\beta$-points contained in $[0,x]\times [0, y]$. 
We have
\begin{align*}
\int_{[0, x] \times [0, y]} \widetilde{V}_s (u, v) dV_s (u, v) &= s^{-2} \sum_{i = 1}^m \Big((i-1)\#\{\alpha \text{ points on } P_i\} - i\#\{\beta \text{ points on } P_i\}\Big)\\
&=s^{-2} \sum_{i = 1}^m \Big( (i-1) \#\{\alpha \text{ points on } P_i\} -  (i-1)\#\{\beta \text{ points on } P_i\} \Big) - s^{-2} \A_2(sx, sy).
\end{align*} 
For each space-time curve $P_i$, contained in $[0, s] \times 
[0, s]$, we have
\begin{align*}
\#\{\alpha \text{ points on } P_i\} - \#\{\beta \text{ points on } P_i\} = 1
\end{align*}
Using this together with $m = s V_s (x, y)$, we have
\begin{align*}
\int_{[0, x] \times [0, y]} \widetilde{V}_s (u, v) dV_s (u,v) &= (2 s^2)^{-1} m (m-1) -  s^{-2}  \A_2(sx, sy)\\
&=\frac{1}{2} V_s (x, y)^2 - \frac{1}{2}s^{-1} V_s (x, y) - s^{-2} \A_2(sx, sy)\\
&= \frac{1}{2}V_s(x, y)^2 - \frac{1}{2} s^{-1} \big(V_s^+(x, y) + V_s^{-} (x, y)\big).
\end{align*}
In the last equality, we have used $s^{-1}\A_2(sx, sy) = V_s^{-} (x, y)$, and $V_s(x, y) =  V_s^{+} (x, y) -  V_s^{-} (x, y)$. 
\end{proof} 
\begin{proposition}
Recall $\gamma$ from Proposition \ref{prop:LLN}. We have $\gamma = \frac{2}{\sqrt{1-t}}$.
\end{proposition}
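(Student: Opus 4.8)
The plan is to run the argument of \cite[Section 2]{groeneboom2001ulam}: combine the law of large numbers for $V_s$ (Proposition \ref{prop:LLN}), which gives $V_s(x,y) \to \gamma\sqrt{xy}$ almost surely, with the law of large numbers for the $\alpha$-points (Proposition \ref{prop:alphaptlim}) and the algebraic identity of Lemma \ref{lem:Vequality}, to pin down $\gamma$. First I would record the consequences of Proposition \ref{prop:alphaptlim}: since $\A_1(s)/s^2 \overset{p}{\to} \frac{1}{1-t}$ and by \eqref{eq:heightfunc} we have $\A_2(sx,sy) = \A_1(sx,sy) - N(sx,sy)$, the rescaled quantities satisfy $V_s^+(x,y) \overset{p}{\to} \frac{xy}{1-t}$ and $V_s^-(x,y) = V_s^+(x,y) - V_s(x,y) \overset{p}{\to} \frac{xy}{1-t} - \gamma\sqrt{xy}$ for every fixed $x,y>0$ (the convergence for general $x,y$ follows from the $s^2$-scaling of the Poisson process and the rectangle version of Lemma \ref{lem:convergeexp}). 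In particular, since $V_s^- \ge 0$ we already get $\gamma\sqrt{xy}\le \frac{xy}{1-t}$, i.e.\ $\gamma\le \frac{2}{\sqrt{1-t}}$ — consistent with the upper bound — but the point is to get equality.

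Next I would take $s\to\infty$ in the identity of Lemma \ref{lem:Vequality}. The left side converges to $\gamma^2 xy$ and the term $s^{-1}(V_s^+ + V_s^-)$ vanishes, so the content is the limit of the cross term $2\int_{[0,x]\times[0,y]} \widetilde V_s(u,v)\,dV_s(u,v)$. Here I would invoke the fact that $V_s(u,v)$ and $\widetilde V_s(u,v)$ both converge (uniformly on compacts, using monotonicity in both coordinates plus a Dini-type argument, exactly as in the passage from rational to real $x,y$ in the proof of Proposition \ref{prop:LLN}) to the deterministic limit $\phi(u,v):=\gamma\sqrt{uv}$; note $\widetilde V_s$ and $V_s$ have the same limit because the single horizontal edge removed in \eqref{eq:Vtilde} contributes $o(s)$. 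Then the Stieltjes integral $\int \widetilde V_s\, dV_s$ converges to $\int_{[0,x]\times[0,y]} \phi \, d\phi$, giving
\begin{equation*}
\gamma^2 xy = 2\int_{[0,x]\times[0,y]} \gamma\sqrt{uv}\; d\big(\gamma\sqrt{uv}\big) = \gamma^2 \cdot 2\int_0^x\!\!\int_0^y \sqrt{uv}\, d\sqrt{uv}.
\end{equation*}
Computing $d\sqrt{uv} = \tfrac12\sqrt{v/u}\,du + \tfrac12\sqrt{u/v}\,dv$ and integrating, the right-hand double integral equals $\tfrac12 xy$, so this identity is automatically satisfied and yields no new information — which is the warning that one must instead use the two \emph{separate} limits of $V_s^+$ and $V_s$, not just their difference.

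The actual extraction of $\gamma$ comes from feeding $V_s(x,y)= V_s^+(x,y)-V_s^-(x,y)$ and the limit $V_s^+ \to \frac{xy}{1-t}$ into Lemma \ref{lem:Vequality} before passing to the limit, the way \cite{groeneboom2001ulam} does: rewriting $\widetilde V_s$ in terms of $V_s^+$ and $V_s^-$ and using that $\int \widetilde V_s \, dV_s^+$ and $\int \widetilde V_s\, dV_s^-$ separately converge, one obtains in the limit an equation of the form $\gamma^2 xy = \frac{2}{1-t}\int_0^x\!\int_0^y (\text{limit of }\widetilde V_s)\, du\,dv - (\text{a } \beta\text{-term})$, and matching this against the self-consistent relation forces $\gamma = \frac{2}{\sqrt{1-t}}$. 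Concretely, the cleanest route is: the limit shape $\phi(x,y)=\gamma\sqrt{xy}$ must satisfy $\phi^2 = 2\int \phi_+\, d\phi$ where $\phi_+(x,y) = \lim V_s^+ = \frac{xy}{1-t}$; substituting $\phi = \gamma\sqrt{xy}$, $d\phi_+ $ is not what appears — rather $\int_{[0,x]\times[0,y]} \widetilde V_s\, dV_s$ in the limit equals $\int \phi\, d\phi$ on the $\alpha$–$\beta$ decomposition, and the only way the bookkeeping of $\alpha$ versus $\beta$ points (with their known densities summing, via \eqref{eq:heightfunc}, to $\phi_+ + \phi_- = \frac{2xy}{1-t} - \gamma\sqrt{xy}$) is consistent is $\gamma^2 = \frac{4}{1-t}$. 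I expect the main obstacle to be precisely this last step: justifying the interchange of limit and Stieltjes integral for the random measures $dV_s$ (which are differences of point masses, not absolutely continuous), and organizing the $\alpha/\beta$ accounting so that the self-consistency collapses to a single equation for $\gamma$ rather than a tautology. The uniform-convergence input needed for the Stieltjes passage is available from Proposition \ref{prop:LLN} together with monotonicity, so the difficulty is bookkeeping rather than analysis, and the final answer $\gamma = \frac{2}{\sqrt{1-t}}$ then matches the upper bound from the stationary-model comparison, completing Theorem \ref{hydrodynamic_limit}.
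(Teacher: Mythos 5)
Your overall strategy is the same as the paper's: combine Lemma~\ref{lem:Vequality}, the LLN for $V_s$ from Proposition~\ref{prop:LLN}, and the LLN for $\alpha$-points from Proposition~\ref{prop:alphaptlim}, and take $s\to\infty$ to extract an equation for $\gamma$. However, there is a genuine scaling error at the very first step that derails the rest of the argument.

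You claim $V_s^+(x,y) \overset{p}{\to} \frac{xy}{1-t}$. This is off by a factor of $s$. By definition $V_s^+(x,y) = s^{-1}\,\A_1(sx,sy)$, and Proposition~\ref{prop:alphaptlim} gives $\A_1(sx,sy) \sim \frac{s^2 xy}{1-t}$, so $V_s^+(x,y) \sim \frac{s\,xy}{1-t} \to \infty$. What actually converges is $s^{-1} V_s^+(x,y) = s^{-2}\A_1(sx,sy) \to \frac{xy}{1-t}$. The same correction applies to $V_s^-$. As a consequence, the term $s^{-1}\big(V_s^+(x,y) + V_s^-(x,y)\big)$ in Lemma~\ref{lem:Vequality} does \emph{not} vanish as you assert — it converges to $\frac{2xy}{1-t}$. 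Because you drop this term, the remaining identity $\gamma^2 xy = \frac{\gamma^2}{2}xy + \frac{\gamma^2}{2}xy$ becomes a tautology, and you are driven to the (unnecessary and ultimately unrigorous) detour about rewriting the Stieltjes integral against $dV_s^+$ and $dV_s^-$ separately and ``$\alpha/\beta$ bookkeeping.'' Once the scaling is fixed, the argument closes cleanly exactly as in the paper: substituting $V_s^- = V_s^+ - V_s$ and $sV_s^+(x,y) = \A_1(sx,sy)$ into Lemma~\ref{lem:Vequality} gives
\begin{equation*}
V_s(x,y)^2 = 2s^{-2}\A_1(sx,sy) - s^{-1}V_s(x,y) + 2\int_{[0,x]\times[0,y]} \widetilde V_s\, dV_s,
\end{equation*}
and letting $s\to\infty$ (along an a.s.\ subsequence for the $\A_1$ term) yields $\gamma^2 xy = \frac{2xy}{1-t} + \frac{\gamma^2}{2}xy$, hence $\gamma^2 = \frac{4}{1-t}$. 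You should also note that one needs finiteness of $\gamma$ (Lemma~\ref{upper-bound}) to discard the negative root/solve the equation, which you do not mention. Your intuition about passing the limit through the Stieltjes integral and the use of the $\alpha$-point LLN is correct, so the gap is the scaling mistake rather than a wrong choice of tools.
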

\begin{proof}
Note that $V_s^- (x, y) = V_s^+ (x, y) - V_s (x, y)$, and $sV_s^+(x, y) = \A_1 (sx, sy)$. Using this together with Lemma \ref{lem:Vequality}, we have
\begin{equation}\label{eq:Vt}
V_s (x, y)^2 = 2s^{-2} \A_1 (sx, sy) - s^{-1} V_s (x, y) + 2\int_{[0, x] \times [0, t]} \widetilde{V}_s (u, v) dV_s(u, v). 
\end{equation}

By Proposition \ref{prop:LLN}, we have almost surely $\lim_{s \to \infty}V_s (u, v) = \gamma\sqrt{uv}$. 
Hence, we have as $s \to \infty$ (see \cite[page 688]{groeneboom2001ulam} for detail)
\begin{equation*}
\int_{[0, x] \times [0, y]} \widetilde{V}_s (u, v) dV_s (u, v) \to \int_{[0, x] \times [0, y]} \gamma \sqrt{uv} \cdot d(\gamma\sqrt{uv}) = \frac{\gamma^2}{4} xy. 
\end{equation*}
By Proposition \ref{prop:alphaptlim} and scaling, we have $\frac{1}{s^2} \A_1(sx, sy) \overset{p}{\to} \frac{xy}{1-t}$, as $y \to \infty$. Hence, there exists a sequence 
$\{s_n\}_{n \in \Z_{\geq 1}}$ that goes to infinity and satisfies $\lim_{n \to \infty} \frac{1}{s_n^2} \A_1(s_n x, s_n y) = \frac{xy}{1-t}$ almost surely.  Take $s = s_n$ and let $n \to \infty$ on both sides of \eqref{eq:Vt}. The left-hand side converges to $\gamma^2 xy$ and the right-hand side converges to $\frac{2xy}{1-t} + \frac{\gamma^2}{2} xy$. 
Hence, we have
$\gamma^2 xy = \frac{2}{1-t} xy + \frac{1}{2} \gamma^2 xy.$ Using this together with Lemma \ref{upper-bound} (which shows that $\gamma$ is finite),
we have $\gamma = \frac{2}{\sqrt{1-t}}$. 
\end{proof}

\begin{rmk}
Since all of the terms containing $V_s$ in \eqref{eq:Vt} converge almost  surely as $s \to \infty$, so does $\A(sx, sy)$. Hence, we can strengthen Proposition \ref{prop:alphaptlim} to almost sure convergence.    
\end{rmk}
\begin{proof}[Proof of Theorem \ref{hydrodynamic_limit}]
The theorem is a direct consequence of Proposition \ref{prop:LLN} and Proposition \ref{prop:alphaptlim}. The convergence  $\frac{N (x,y)}{\sqrt{xy}} \overset{p}{\to} \frac{2}{\sqrt{1-t}}$ follows from Lemma \ref{lem:scaling}.
\end{proof}

\begin{appendix}
\section{Configurations for the two-colored $t$-PNG model}
\label{sec:twocolorfig}
In this section, we draw the two-colored configurations that have non-zero weights. In other words, we draw all of the two-colored configurations with $\bi, \bj, \bk, \bl \in \{0, 1\}^2$ such that $\LL^2(\bi, \bj; \bk, \bl) \neq 0$.

\begin{figure}[ht]
\centering
\begin{tikzpicture}[scale = 1]
\begin{scope}[xshift = 0cm, yshift = 4cm]
\node at (0, -0.8) {$1$};
\end{scope}
\begin{scope}[xshift = 2cm, yshift = 4cm]
\draw[red, thick] (-0.5, 0) -- (0.5, 0);
\draw[red, thick] (0, -0.5) -- (0, 0.5);
\node at (0, -0.8) {$t$};
\end{scope}
\begin{scope}[xshift = 4cm, yshift = 4cm]
\draw[red, thick] (-0.5, 0) -- (0, 0);
\draw[red, thick] (0, -0.5) -- (0, 0);
\node at (0, -0.8) {$1-t$};
\end{scope}
\begin{scope}[xshift = 6cm, yshift = 4cm]
\draw[red, thick] (-0.5, 0) -- (0.5, 0);
\node at (0, -0.8) {$1$};
\end{scope}
\begin{scope}[xshift = 8cm, yshift = 4cm]
\draw[red, thick] (0, -0.5) -- (0, 0.5);
\node at (0, -0.8) {$1$};
\end{scope}
\begin{scope}[xshift = 0cm, yshift = 2cm]
\draw[blue, thick] (-0.5, 0) -- (0.5, 0);
\draw[blue, thick] (0, -0.5) -- (0, 0.5);
\node at (0, -0.8) {$t$};
\end{scope}
\begin{scope}[xshift = 2cm, yshift = 2cm]
\draw[blue, thick] (-0.5, 0) -- (0, 0);
\draw[blue, thick] (0, -0.5) -- (0, 0);
\node at (0, -0.8) {$1-t$};
\end{scope}
\begin{scope}[xshift = 4cm, yshift = 2cm]
\draw[blue, thick] (-0.5, 0) -- (0.5, 0);
\node at (0, -0.8) {$1$};
\end{scope}
\begin{scope}[xshift = 6cm, yshift = 2cm]
\draw[blue, thick] (0, -0.5) -- (0, 0.5);
\node at (0, -0.8) {$1$};
\end{scope}
\begin{scope}[xshift = 0cm, yshift = 0cm]
\draw[thick, red] (-0.1, -0.5) -- (-0.1, 0.5);
\draw[thick, blue] (0, -0.5) -- (0, 0.5);
\node at (0, -0.8) {$1$};
\end{scope}
\begin{scope}[xshift = 2cm]
\draw[thick, red] (-0.5, 0) -- (0.5, 0);
\draw[thick, blue] (-0.5, -0.1) -- (0.5, -0.1);
\node at (0, -0.8) {$1$};
\end{scope}
\begin{scope}[xshift = 4cm]
\draw[red, thick] (-0.5, 0) -- (0.5, 0);
\draw[blue, thick] (0, -0.5) -- (0, 0.5);
\node at (0, -0.8) {$t$};
\end{scope}
\begin{scope}[yshift = 0cm, xshift = 6cm]
\draw[red, thick] (-0.5, 0) -- (0.5, 0);
\draw[blue, thick] (0, -0.5) -- (0, -0.1);
\draw[blue, thick] (0, -0.1) -- (0.5, -0.1); 
\node at (0, -0.8) {$1 - t$};
\end{scope}
\begin{scope}[xshift = 8cm]
\draw[blue, thick] (-0.5, 0) -- (0.5, 0);
\draw[red, thick] (0, -0.5) -- (0, 0.5);
\node at (0, -0.8) {$t$};
\end{scope}
\begin{scope}[xshift = 10cm]
\draw[blue, thick] (-0.5, 0) -- (-0.1, 0);
\draw[red, thick] (0, -0.5) -- (0, 0.5);
\draw[blue, thick] (-0.1, 0) -- (-0.1, 0.5);
\node at (0, -0.8) {$1 - t$};
\end{scope}
\begin{scope}[yshift = -2cm]
\draw[blue, thick] (-0.5, 0) -- (0.5, 0);
\draw[red, thick] (-0.1, -0.5) -- (-0.1, 0.5);
\draw[blue, thick] (0, -0.5) -- (0, 0.5);
\node at (0, -0.8) {$1$};
\end{scope}
\begin{scope}[yshift = -2cm, xshift = 2cm]
\draw[blue, thick] (-0.5, 0) -- (0.5, 0);
\draw[red, thick] (-0.5, -0.1) -- (0.5, -0.1);
\draw[blue, thick] (0, -0.5) -- (0, 0.5);
\node at (0, -0.8) {$1$};
\end{scope}
\begin{scope}[yshift = -2cm, xshift = 4cm]
\draw[red, thick] (-0.5, 0) -- (0.5, 0);
\draw[red, thick] (-0.1, -0.5) -- (-0.1, 0.5);
\draw[blue, thick] (0, -0.5) -- (0, 0.5);
\node at (0, -0.8) {$t$};
\end{scope}
\begin{scope}[yshift = -2cm, xshift = 6cm]
\draw[red, thick] (-0.5, 0) -- (-0.1, 0);
\draw[red, thick] (-0.1, -0.5) -- (-0.1, 0);
\draw[blue, thick] (0, -0.5) -- (0, 0);
\draw[blue, thick] (0, 0) -- (0.5, 0);
\node at (0, -0.8) {$1 - t$};
\end{scope}
\begin{scope}[yshift = -2cm, xshift = 8cm]
\draw[blue, thick] (-0.5, 0) -- (0.5, 0);
\draw[red, thick] (-0.5, -0.1) -- (0.5, -0.1);
\draw[red, thick] (0, -0.5) -- (0, 0.5);
\node at (0, -0.8) {$t$};
\end{scope}
\begin{scope}[yshift = -2cm, xshift = 10cm]
\draw[blue, thick] (-0.5, 0) -- (0, 0) -- (0, 0.5);
\draw[red, thick] (-0.5, -0.1) -- (0, -0.1);
\draw[red, thick] (0, -0.5) -- (0, -0.1);
\node at (0, -0.8) {$1-t$};
\end{scope}
\begin{scope}[yshift = -4cm, xshift = 0cm]
\draw[red, thick] (-0.1, -0.5) -- (-0.1, 0.5);
\draw[red, thick] (-0.5, -0.1) -- (0.5, -0.1);
\draw[red, thick] (-0.1, -0.5) -- (-0.1, 0.5);
\draw[blue, thick] (0, -0.5) -- (0, 0.5);
\draw[blue, thick] (-0.5, 0) -- (0.5, 0);
\node at (0, -0.8) {$t$};
\end{scope}
\begin{scope}[yshift = -4cm, xshift = 2cm]
\draw[red, thick] (-0.1, -0.5) -- (-0.1, -0.1);
\draw[red, thick] (-0.5, -0.1) -- (-0.1, -0.1);
\draw[blue, thick] (0, -0.5) -- (0, 0);
\draw[blue, thick] (-0.5, 0) -- (0, 0);
\node at (0, -0.8) {$1 - t$};
\end{scope}
\end{tikzpicture}
\caption{Let red be the color $1$, and let blue be the color $2$. The above figure shows all the configurations with non-zero weights in the two-colored $t$-PNG model given by Definition \ref{def:nLmatrix}. Note that some of the configurations can be obtained from the two-colored S6V model by horizontal complementation of the red lines (i.e., deleting existing horizontal red lines and placing horizontal red lines where there aren't any). However, not all the configurations (e.g., the ones on the last row) can be obtained in this way.}
\label{fig:twocolor}
\end{figure}
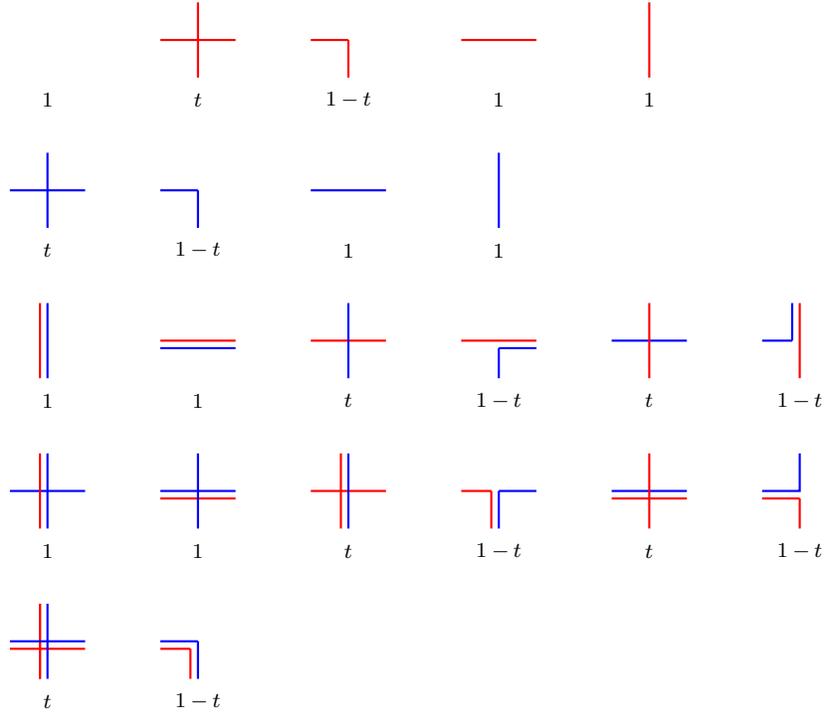
\newpage
\section{Computing $G^*$}\label{sec:generators}
In this section, we prove the following lemma.
\begin{lemma} \label{lem:dualGen}
We have
$$
    G^*g(y) = \sum_{i=1}^{\infty}t^{i-1}(1-t)\int_0^{T_1}(g(\mathcal{L}^i_sy) - g(x))ds + \sum_{i=1}^{\infty}\frac{t^{i-1}}{\lambda}(g(\mathcal{R}^iy) - g(y)).
$$
\end{lemma}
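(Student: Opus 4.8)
The plan is to verify the duality identity $\int_E Gf(x)\,g(x)\,\mu(dx) = \int_E f(y)\,G^*g(y)\,\mu(dy)$ directly by substituting the explicit form of $G$ from Proposition \ref{generator-X}, expanding everything against the stationary Poisson law $\mu$ of intensity $\lambda$, and matching terms. Since $G$ splits into a ``bulk'' part $\sum_i t^{i-1}(1-t)\int_0^{T_1}(f(\mathcal{R}^i_z x)-f(x))\,dz$ and a ``boundary'' part $\sum_i \frac{t^{i-1}}{\lambda}(f(\mathcal{L}^i x)-f(x))$, I would handle the two parts separately, aiming to show that the bulk part of $G$ is dual (w.r.t.\ $\mu$) to the bulk part of $G^*$, i.e. the $\mathcal{L}^i_s$ term, and the boundary part of $G$ is dual to the $\mathcal{R}^i$ term. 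The first reduction is to show, for each fixed $i \geq 1$, the pair of identities
\begin{align*}
\int_E \Big(\int_0^{T_1}(f(\mathcal{R}^i_z x) - f(x))\,dz\Big) g(x)\,\mu(dx) &= \int_E f(y)\Big(\int_0^{T_1}(g(\mathcal{L}^i_s y) - g(y))\,ds\Big)\mu(dy),\\
\int_E (f(\mathcal{L}^i x) - f(x))\,g(x)\,\mu(dx) &= \int_E f(y)\,(g(\mathcal{R}^i y) - g(y))\,\mu(dy),
\end{align*}
after which summing against $t^{i-1}(1-t)$ and $\frac{t^{i-1}}{\lambda}$ respectively and adding gives the claim. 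These reduce to Palm-calculus / Mecke-type computations for the Poisson process: the ``$-f(x)$'' and ``$-g(y)$'' pieces already match up trivially (an integral of $g$ over $\mu$ times $\int_0^{T_1}dz = T_1$, etc.), so the content is in the ``insertion'' pieces.

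The key computation for the bulk identity is to expand $\mu$ as a mixture over the number of points $n$ (Poisson$(\lambda T_1)$ weights) with points i.i.d.\ uniform on $[0,T_1]$ given $n$, and to observe that $\mathcal{R}^i_z$ inserts a point at $z$ and deletes the point that is $i$ positions to the right. Integrating $z$ over $[0,T_1]$ against Lebesgue and using the Mecke equation, the operation ``insert a uniform point, delete the $i$-th to its right'' is shown to be adjoint, under the Poisson measure with the correct normalizing constant $\lambda$, to ``insert a uniform point, delete the $i$-th to its left'' — this is exactly $\mathcal{L}^i_s$. Concretely, one writes the left side as a sum over $n$ and over which existing point is deleted, changes variables so that the inserted point becomes a ``generic'' point of the new configuration of size $n$ or $n-1$, and checks that the Poisson weight ratios produce precisely the factor $1$ (no extra $\lambda$) for the bulk term. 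The boundary identity is simpler: $\mathcal{L}^i$ deletes the $i$-th point from the left (when it exists) and $\mathcal{R}^i$ deletes the $i$-th point from the right; using Mecke to re-insert a point and the Poisson weight ratio $\tfrac{(\lambda T_1)^{n}/n!}{(\lambda T_1)^{n-1}/(n-1)!} = \lambda T_1/n$ together with the uniform density $1/T_1$ gives the matching factor, and the left/right relabeling is just the bijection reversing the order of the points.

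I expect the main obstacle to be bookkeeping the index shifts and the ``overflow'' cases — namely the case $i > n-m+1$ in $\mathcal{R}^i_z$ (insertion with no deletion, increasing $n$ by one) versus $i \leq n-m+1$ (insertion with deletion, $n$ preserved), and the corresponding split for $\mathcal{L}^i_s$, as well as the degenerate boundary cases $i>n$ in $\mathcal{L}^i, \mathcal{R}^i$. One must check that these boundary-of-state-space contributions pair up correctly on both sides of the duality, which is the delicate part of the Mecke/Palm computation; everything else is a routine change of variables. The proof is essentially the $t$-analogue of the argument in \cite{HammersleySourcesAndSinks}, the only new feature being that the geometric weights $t^{i-1}$ (resp.\ $t^{i-1}(1-t)$) ride along as inert scalars through every step, so I would set up the computation for a single fixed $i$ and only restore the $t$-weighted sum at the very end.
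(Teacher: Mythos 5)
Your decomposition into two separate duality identities --- one pairing $\mathcal{R}^i_z$ with $\mathcal{L}^i_s$ and one pairing $\mathcal{L}^i$ with $\mathcal{R}^i$, each holding for every fixed $i$ --- is not correct, and this is where the proof breaks. Take $i=1$, $g\equiv 1$, and $f=\mathbbm{1}_{E_0}$ (the indicator of the empty configuration, which lies in $C_0(E)$ since $E_0$ is a clopen singleton). Then $f(\mathcal{R}^1_z x)=0$ for all $x,z$ since $\mathcal{R}^1_z x$ always has a particle, so the left side of your bulk identity equals $-T_1\mu(E_0)=-T_1 e^{-\lambda T_1}\neq 0$, while the right side vanishes because $g(\mathcal{L}^1_s y)-g(y)=0$. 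Similarly, your boundary identity gives left side $\mu(E_1)=\lambda T_1 e^{-\lambda T_1}\neq 0$ and right side $0$. The \emph{full} duality does hold for this pair: once you sum over $i$ with the geometric weights $t^{i-1}(1-t)$ and $t^{i-1}/\lambda$, the bulk contribution $-T_1 e^{-\lambda T_1}$ is cancelled by the boundary contribution $\tfrac{1}{\lambda}\cdot\lambda T_1 e^{-\lambda T_1}$, but this cancellation happens \emph{across} the two families of operators and involves the ratio of the weights. So the bulk term of $G$ does not pair only with the bulk term of $G^*$; the two families talk to each other, and they do so precisely through the geometric weights.

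The structural reason, visible in the paper's change of variables, is that $\mathcal{R}^i_z$ is not particle-number preserving: when $i>j(z)$ it increases $n$ by one (a pure insertion), and the inverse of that branch is $\mathcal{R}^{j(z)+1}$, not $\mathcal{L}^i_s$. Likewise, when $i\leq n$ the operator $\mathcal{L}^i$ decreases $n$ by one, and its inverse is a family of $\mathcal{L}^k_s$ maps (a pure insertion back), not $\mathcal{R}^i$. Hence under change of variables the ``overflow'' piece of the $\mathcal{R}^i_z$ sum lands on the boundary term of $G^*$ and the ``non-overflow'' piece of the $\mathcal{L}^i$ sum lands on the bulk term of $G^*$. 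To reassemble these pieces into the stated form of $G^*$ one must re-sum over $i$: the paper uses $t^{i-1}=\sum_{j\geq i}t^{j-1}(1-t)$ and $\sum_{i>j}t^{i-1}(1-t)=t^j$ to convert the $\lambda$-ratios of Poisson weights $\lambda^n$ versus $\lambda^{n\pm 1}$ into the $1/\lambda$ prefactor and the geometric tails. So the geometric weights cannot be treated as inert scalars that ride along; they are exactly the mechanism that makes the bulk/boundary cross-pairing close. Your instinct that the delicate part is the boundary-of-state-space cases is right, but the fix requires abandoning the per-$i$ bulk-with-bulk / boundary-with-boundary pairing.
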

\begin{proof}
Recall that $\mu$ is the measure induced on $E$ by a Poisson process with intensity $\lambda$. Let $\mu_n$ be the restriction of $\mu$ to $E_n$, where $E_n$ consists of configurations with $n$ particles and where $E_0 = \{\emptyset\}$ consists of the empty configuration. Then we have 
$\mu_n(dx) = \lambda^ne^{-\lambda T_1}dx.$

We define 
\begin{equation}\label{eq:G+}
G_{+} f = Gf + \left(\frac{1}{\lambda(1-t)} + T_1\right)f. 
\end{equation}

For simplicity, we compute the dual of $G_{+}$ which will then yield the dual of $G$. For $f,g \in C_0(E)$ we have 
\begin{align*}
&\int_{E} G_{+} f(x) g(x) \mu(d x) =\sum_{n=0}^{\infty} \int_{E_{n}} G_{+} f(x) g(x) \mu_{n}(d x) \\
&=e^{-\lambda T_{1}} \sum_{n=0}^{\infty}\left[\lambda^{n} \int_{E_{n}} \int_{0}^{T_{1}}\sum_{i=1}^{\infty}t^{i-1}(1-t) f\left(\mathcal{R}^i_{z} x\right) g(x) d z d x+\lambda^{n-1}\int_{E_{n}}\sum_{i=1}^{\infty}t^{i-1} f(\mathcal{L}^i x) g(x) d x\right]\\
&= e^{-\lambda T_1} \sum_{n = 0}^\infty A_n.
\end{align*}

Suppose that $x = (x_1, \ldots, x_n) \in E_n, z \in [0, T_1]$, and there are $m-1$ particles in $x$ to the left of $z$. Let $j(z) = n-m-1$ equal the number of particles to the right of $z$. We can decompose $A_n$ into 
\begin{align}
A_n & = \lambda^n \sum_{i=1}^{\infty}t^{i-1}(1-t)\int_{\left\{x \in E_{n}, z \in [0,T_1], i \leq j(z)\right\}} f\left(\mathcal{R}^i_z x\right)g(x) d x d z \label{eq:G1}\\
&\quad +\lambda^n \sum_{i=1}^{\infty}t^{i-1}(1-t)\int_{\left\{x \in E_{n}, z \in [0,T_1], i > j(z)\right\}} f\left(\mathcal{R}^i_z x\right)g(x) d x d z  \label{eq:G2}\\
&\quad +\lambda^{n-1}\sum_{i=1}^nt^{i-1}\int_{E_n} f(\mathcal{L}^ix)g(x)dx  \label{eq:G3} \\
&\quad +\lambda^{n-1}\sum_{i=n+1}^{\infty}t^{i-1}\int_{E_n} f(\mathcal{L}^ix)g(x)dx.  \label{eq:G4}
\end{align}

We will perform a change of variables by setting $y$ to equal the input to the function $f$. In other words, we will set $y = \mathcal{R}^i_z x$ or $y = \mathcal{L}^ix$ in the integrals above. We then need to solve for $x$ in terms of $y$ where $y$ belongs to the dual system. Let $k(s)$ equal the number of particles to the left of $s$ in the dual system. Then we have the following four cases: 
\begin{enumerate}[leftmargin = 2em]
    \item $y = \mathcal{R}_z^ix \in E_n$ for $i \leq j(z)$.
    In this case, $x = \mathcal{L}_s^iy$ where $s = x_{m+i-1}$.
    
    \item $y = \mathcal{R}_z^ix \in E_{n+1}$ for $i > j(z)$.
    In this case, $x = \mathcal{R}^{j(z)+1}y.$
    
    \item $y =  \mathcal{L}^ix \in E_{n-1}$ for $i \leq n$.
    In this case, $x = \mathcal{L}_s^ky$ for $s = x_{i}$ and for all $k > k(s)$.
    
    \item $y = \mathcal{L}^ix \in E_n$ for $i > n$.
    In this case, $x = \mathcal{R}^ky = y$ for all $k > n$.
    
\end{enumerate}

Therefore, when we perform the change of variables, the first and third terms \eqref{eq:G1} and \eqref{eq:G3} will correspond to the operators $\mathcal{L}^i_s$ and the second and fourth terms \eqref{eq:G2} and \eqref{eq:G4} will correspond to the operator $\mathcal{R}^i$. We compute the change of variables explicitly on each of the four terms. 
\bigskip
\\
\textbf{First term:} We have
\begin{align*}
&\lambda^n \sum_{i=1}^{\infty}t^{i-1}(1-t)\int_{\left\{x \in E_{n}, z \in [0,T_1], i \leq j(z)\right\}} f\left(\mathcal{R}^i_z x\right)g(x) d x d z\\
&=\lambda^n \sum_{i=1}^{\infty}t^{i-1}(1-t)\int_{\left\{y \in E_{n}, s \in [0, T_1], i \leq k(s)\right\}} f\left(y\right)g(\mathcal{L}_s^i y) dy ds.
\end{align*}
\textbf{Second term:} We have
\begin{align*}
&\lambda^n \sum_{i=1}^{\infty}t^{i-1}(1-t)\int_{\left\{x \in E_{n}, z \in [0,T_1], i > j(z)\right\}} f\left(\mathcal{R}^i_z x\right)g(x) d x d z\\
&= \lambda^n \sum_{i=1}^{\infty}\sum_{j=0}^{i-1}t^{i-1}(1-t)\int_{\left\{x \in E_{n}, z \in [0,T_1], j(z) = j\right\}} 
f\left(\mathcal{R}^i_z x\right)g(x) d x d z\\
&= \lambda^n \sum_{i=1}^{\infty}\sum_{j=0}^{i-1}t^{i-1}(1-t)\int_{\left\{y \in E_{n+1}\right\}} 
f\left(y\right)g(\mathcal{R}^{j+1}y) dy.
\end{align*}
We sum over $i$ and shift the index $j$ by one to get 
$$\lambda^n \sum_{j=1}^{\infty}t^{j-1}(1-t)\int_{\{y \in E_{n+1}\}} f\left(y\right)g(\mathcal{R}^{j}y) dy.
$$
\textbf{Third term:} We have
\begin{align*}
\lambda^{n-1}\sum_{i=1}^nt^{i-1}\int_{E_n} f(\mathcal{L}^ix)g(x)dx &=\lambda^{n-1}\sum_{i=1}^n \sum_{j=i}^{\infty}t^{j-1}(1-t)\int_{E_n} f(\mathcal{L}^ix)g(x)dx \\
&=\lambda^{n-1}\sum_{i=1}^n \sum_{j=i}^{\infty}t^{j-1}(1-t)\int_{\{y \in E_{n-1}, s \in [0, T_1], y_{i-1} \leq  s \leq y_i\}} f(y)g(\mathcal{L}^{j}_sy)dy ds \\
&=\lambda^{n-1} \sum_{j=1}^{\infty}t^{j-1}(1-t) \int_{\{y \in E_{n-1}, s \in [0, T_1],  j > k(s)\}} f(y)g(\mathcal{L}^{j}_sy)dy ds.
\end{align*}
\textbf{Fourth term:} We have
\begin{align*}
\lambda^{n-1}\sum_{i=n+1}^{\infty}t^{i-1}\int_{E_n} f(\mathcal{L}^ix)g(x)dx &= \lambda^{n-1}\sum_{i=n+1}^{\infty}t^{i-1}\int_{E_n} f(y)g(\mathcal{R}^{i}y)dy.
\end{align*}
We collect the four terms and relabel the index $j$ by $i$ if necessary to obtain
\begin{align*} 
A_n &= \lambda^n \sum_{i=1}^{\infty}t^{i-1}(1-t)\int_{\left\{y \in E_{n}, s \in [0, T_1], i \leq k(s)\right\}} f\left(y\right)g(\mathcal{L}_s^i y) dy d s\\
&\quad +\lambda^n \sum_{i=1}^{\infty}t^{i-1}(1-t)\int_{E_{n+1}} f\left(y\right)g(\mathcal{R}^{i}y) dy\\
&\quad +\lambda^{n-1} \sum_{i=1}^{\infty}t^{i-1}(1-t) \int_{\{y \in E_{n-1}, s \in [0, T_1],  i > k(s)\}} f(y)g(\mathcal{L}^{i}_sy)dy ds\\
&\quad +\lambda^{n-1}\sum_{i=n+1}^{\infty}t^{i-1}\int_{E_n} f(y)g(\mathcal{R}^{i}y)dy. 
\end{align*}
Finally, we re-index each term so that all the integrals are over $E_n$ instead of $E_{n-1}$ or $E_{n+1}$:
\begin{align*}
A_n &= \lambda^n \sum_{i=1}^{\infty}t^{i-1}(1-t)\int_{\left\{y \in E_{n}, s \in [0, T_1], i \leq k(s)\right\}} f\left(y\right)g(\mathcal{L}_s^i y) dy d s \\
&\quad +\lambda^{n-1} \sum_{i=1}^{\infty}t^{i-1}(1-t)\int_{E_{n}} f\left(y\right)g(\mathcal{R}^{i}y) dy \\
&\quad +\lambda^{n} \sum_{i=1}^{\infty}t^{i-1}(1-t) \int_{\{y \in E_{n}, s \in [0, T_1],  i > k(s)\}} f(y)g(\mathcal{L}^{i}_sy)dy ds \\
&\quad +\lambda^{n-1}\sum_{i=n+1}^{\infty}t^{i-1}\int_{E_n} f(y)g(\mathcal{R}^{i}y)dy.
\end{align*}
Multiplying everything back by $e^{-\lambda T_1}$ and summing over $n$, we get
\begin{align*}
&\int_{E} G_{+} f(x) g(x) \mu(d x)  = e^{-\lambda T_1} \sum_{n = 0}^\infty A_n\\
&=e^{-\lambda T_{1}} \sum_{n=0}^{\infty}\left[\lambda^{n} \int_{E_{n}} \int_{0}^{T_{1}}\sum_{i=1}^{\infty}t^{i-1}(1-t) f\left(y\right) g(\mathcal{L}^i_sy) d s d y+\lambda^{n-1}\int_{E_{n}}\sum_{i=1}^{\infty}t^{i-1} f(y) g(\mathcal{R}^i y) d y\right]\\
&=\int_E f(y) G_+^*g(y)\mu(dy),
\end{align*}
where $G_+^*$ is the dual of $G_+$. 
By \eqref{eq:G+}, we have
$$
G^*g =  G_+^*g - \Big(\frac{1}{\lambda(1-t)}+ T_1\Big)g,
$$
which concludes the proof.

\end{proof}

\section{Simulation of the $t$-PNG model}
\label{sec:simulations}
In the following, we provide simulations of the $t$-PNG model on the square $[0, 12] \times [0, 12]$ for different values of $t$.
\begin{figure}[ht]
\includegraphics[scale = 0.4]{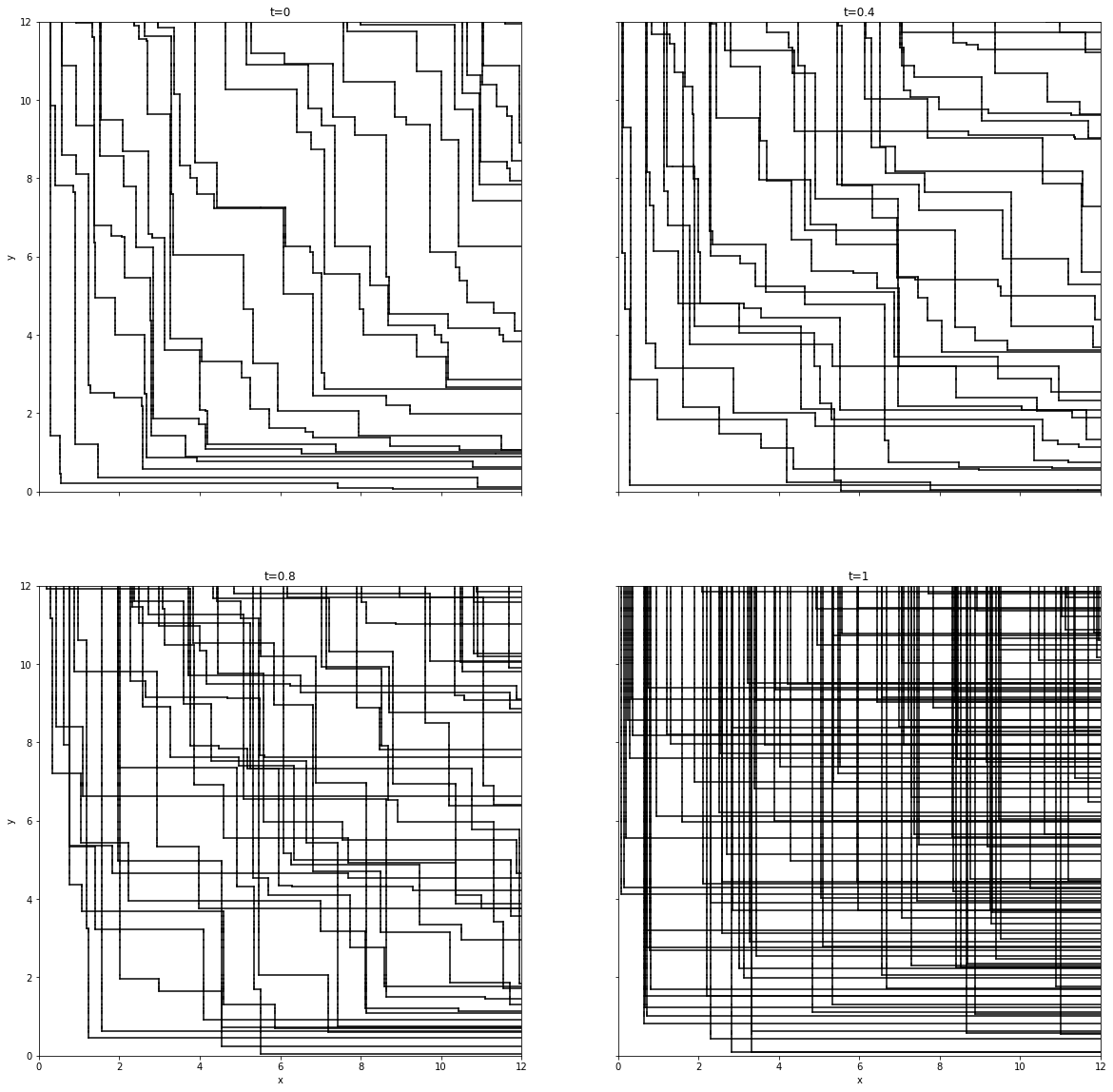}
\centering
\caption{Samplings of the $t$-PNG model for four different values of $t$
.}
\label{fig:simulations}
\end{figure}
\end{appendix}

\begin{acks}[Acknowledgments]
The authors thank Amol Aggarwal, Ivan Corwin, Pablo Ferrari, and Firas Rassoul-Agha for the helpful discussion. We thank the anonymous referees for their helpful comments.
\end{acks}

\begin{funding}
The authors acknowledge support from NSF DMS-1928930 during
their participation in the program “Universality and Integrability in Random Matrix Theory and
Interacting Particle Systems” hosted by the Mathematical Sciences Research Institute in Berkeley, California during the Fall semester of 2021. Hindy Drillick was supported by the National Science Foundation Graduate Research Fellowship under Grant No. DGE-1644869. 
%
\end{funding}


\bibliographystyle{imsart-number} 
\bibliography{refs.bib}       


\end{document}